\theoremstyle{definition}
\newtheorem{defn}{Definition}[section]
\theoremstyle{lemma}
\newtheorem{prop}[defn]{Proposition}
\theoremstyle{lemma}
\newtheorem{lemma}[defn]{Lemma}
\theoremstyle{lemma}
\newtheorem{thm}[defn]{Theorem}
\theoremstyle{lemma}
\newtheorem{cor}[defn]{Corollary}
\theoremstyle{lemma}
\theoremstyle{definition}
\newtheorem{rmk}[defn]{Remark}
\theoremstyle{definition}
\newcommand{\ra}{\rightarrow}
\newcommand{\lra}{\longrightarrow}
\newcommand{\sr}{\stackrel}
\newcommand{\hra} {\hookrightarrow}
\newcommand {\F} {\mathbb{F}}
\newcommand {\Z} {\mathbb{Z}}
\newcommand {\R} {\mathbb{R}}
\newcommand {\C} {\mathbb{C}}
\newcommand {\M} {\mathbb{M}}
\newcommand {\sus} {\Sigma}
\newcommand {\tensor} {\otimes}
\newcommand {\iso} {\cong}
\newcommand {\dsum} {\oplus}
\newcommand {\we} {\simeq}
\newcommand {\Ei} {E_{\infty}}
\newcommand {\Hi} {H_{\infty}}
\newcommand {\ii} {\mathrm{i}}
\newcommand {\uu} {\mathrm{u}}
\newcommand {\ff} {\mathrm{f}}
\newcommand {\oo} {\overline}
\newcommand {\wt} {\widetilde}
\newcommand {\ssra} {\Rightarrow}
\newcommand {\cV} {\mathcal{V}}
\newcommand {\cA} {\mathcal{A}}
\newcommand {\bu} {\bullet}
\newcommand {\mc} {\mathcal}
\newcommand {\eHF} {\mathrm{H}\underline{\mathbb{F}}}
\newcommand{\HFR}{\mathrm{H}\mathbb{F}^{\mathbb{R}}}
\newcommand {\eF} {\underline{\mathbb{F}}}
\newcommand {\HH} {\mathrm{H}}
\newcommand {\RP}{\mathbb{R}\mathrm{P}}
\newcommand {\tG}{\widetilde{\Gamma}}
\newcommand{\bss}{\begin{singlespace}}
\newcommand{\ess}{\end{singlespace}}
\newcommand{\Ab}{A_{\bullet}}
\newcommand{\Bb}{B_{\bullet}}
\newcommand{\Xb}{X_{\bullet}}
\newcommand{\Yb}{Y_{\bullet}}
\newcommand{\Gb}{\Gamma_{\bullet}}
\newcommand{\tGb}{\tGamma_{\bullet}}
\newcommand{\cra}{{\ooalign{$\longrightarrow$\cr\hidewidth$\circ$\hidewidth\cr}}}
\newcommand{\cla}{{\ooalign{$\longleftarrow$\cr\hidewidth$\circ$\hidewidth\cr}}}
\newcommand {\tGamma}{\widetilde{\Gamma}}
\DeclareMathOperator{\Ext}{\mathrm{Ext}}
\DeclareMathOperator{\colim}{\mathrm{colim}}
\DeclareMathOperator{\hocolim}{\mathrm{hocolim}}
\DeclareMathOperator{\Hom}{\mathrm{Hom}}
\title{Squaring operations in the $RO(C_2)$-graded and real motivic Adams spectral sequences}
\author{Sean Tilson}
\begin{document}
\maketitle

\begin{abstract}
	In this paper we establish a formula for computing $d_2(sq^i(x))$ where $x$ is a permanent cycle in the $C_2$-equivariant Adams spectral sequence or the motivic Adams spectral sequence over $Spec(\R)$.
	This requires establishing that the Adams towers have an $\Hi$-structure as well as determining the attaching maps for $C_2$-equivariant projective spaces.
	The attaching maps of $C_2$-equivariant projective spaces can then be used to determine the coefficients of differentials in both the equivariant and motivic case.
	At the end some sample computations are given.
\end{abstract}
\tableofcontents
\section{Introduction}
One of the early major advances in stable homotopy theory was the construction of the Adams spectral sequence.
It supplanted Serre's method as the main tool for computing stable homotopy groups of spheres.
Despite its advantage over Serre's algorithm, the issue of computing differentials remains a difficult one.
One approach to resolving this issue is to take more structure into account.
In the case of the Adams spectral sequence, this first began with work of Kahn in \cite{Kahn} and culminated in the work of Bruner, see Chapters 4, 5, and 6 of \cite{HRS}.
Intermediate and important work was done independently by Milgram and M\"akinen as well. 

The work of Bruner gives a formula for computing differentials on $P^i(x)$ where 
\[
x\in E_2^{s,t}(Y)=\Ext^{s,t}_{\mathcal{A}^{cl}_*}(\F_p,\HH_*(Y;\F_p)),
\]
$P^i$ is an algebraic Steenrod operation \`a la May (as in \cite{MaySteenrod}), and $Y$ is an $\Hi$-ring spectrum.
The Bruner formula was a significant advance for those working with the Adams spectral sequence, for example see \cite{BrunerInfiniteFamily} or \cite{BrunerNewDifferential}.
The computation of algebraic Steenrod operations on $\Ext$ has been partially automated by Bruner.
This reduces the computations of various differentials to an almost algorithmic task.
The input for this process is a ``nice'' representation of the element of $\Ext$ with which one is interested.
These formulae are not the same as requiring the differential to commute with operations.

If that were the case, the result would be 
\[
d_r(P^i(x))=0
\]
when $x$ is a permanent cycle.
These results can by interpreted as an identification of the error term that prevents such power operations from commuting with differentials.
In this paper, we prove the following analogue of a special case of Bruner's formula in the $C_2$-equivariant and real motivic settings.
The result is stated slightly ambiguously meaning we do not include superscripts to distinguish between the settings.

\begin{thm}
Consider the $C_2$-equivariant Adams spectral sequence converging to the $RO(C_2)$-graded equivariant stable homotopy groups of spheres or the real motivic Adams spectral sequence converging to the bigraded motivic stable homotopy groups of spheres.
For a permanent cycle $x \in \Ext^{s,p,q}_{\cA_*}(\M_2,\M_2)$, we have 
\[
d_2 sq^{i-1} x =
\alpha_{i,q} sq^i x
\]
where 
\[
\alpha_{i, q} = \left\{
\begin{array}{ll}
h_0  & i \equiv 1, q \equiv 1 \pmod{2} \\
h_0 + \rho h_1 & i \equiv 1, q \equiv 0 \pmod{2} \\
\rho h_1 & i \equiv 0, q \equiv 1 \pmod{2} \\
0 & i \equiv 0, q \equiv 0 \pmod{2} \\
\end{array}
\right.
\]
in $\Ext^{1,1,0}_{\cA_*}(\M_2,\M_2)$. 
\end{thm}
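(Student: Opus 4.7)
The plan is to adapt Bruner's classical proof of the analogous formula in the ordinary Adams spectral sequence (Chapter VI of \cite{HRS}) to the $C_2$-equivariant and real motivic settings, using the $\Hi$-structure on the Adams tower promised in the abstract together with the attaching-map calculation for equivariant/motivic projective space. Concretely, represent the permanent cycle $x \in \Ext^{s,p,q}$ by a map $\Sigma^{p,q} Y \to W_s$ into the $s$-th stage of the Adams tower. Applying the extended square $D_2 = E\Sigma_{2+} \wedge_{\Sigma_2} (-)^{\wedge 2}$ and postcomposing with the $\Hi$ structure map $D_2 W_s \to W_{2s}$ produces a map from $D_2 \Sigma^{p,q} Y$ to $W_{2s}$. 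Since $\Sigma^{p,q}Y$ splits off a smash summand of $D_2 \Sigma^{p,q} Y$ after filtering $E\Sigma_2$ equivariantly by its cellular skeleta, the various cells produce, on passage to the $E_2$-page, the classes $sq^i x$ for varying $i$.

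Next, I would identify the filtration. The equivariant/motivic replacement for $B\Sigma_2 = \RP^\infty$ that naturally appears here is the projective space whose skeletal filtration alternates between cells of trivial weight and cells of weight $1$ (sign representation in the equivariant case, Tate twist in the motivic case). The attaching maps between consecutive cells are the critical input: in the classical case they alternate between $2$ and $0$, which is what makes $h_0$ appear in Bruner's formula. Here the attaching maps alternate between combinations of $2$ (the map detected by $h_0$) and $\rho$ (the class of $-1 \in \G_m$ motivically, or the Euler class of the sign representation equivariantly). Once these attaching maps are in hand, identifying the coefficient $\alpha_{i,q}$ is a bookkeeping exercise: the parity of $i$ selects which pair of consecutive cells are involved (and hence whether the attaching map is of "$h_0$-type" or "$\rho$-type"), while the parity of $q$ controls the weight-shift of the cell where $sq^{i-1} x$ lives, effectively rotating the case analysis.

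Finally, the differential $d_2 sq^{i-1} x$ is extracted by the standard geometric-boundary argument: the filtration on $D_2 \Sigma^{p,q}Y$ induces a spectral sequence in which the $d_1$ between adjacent cells is precisely the attaching map, so $d_2$ in the Adams spectral sequence picks up $\alpha_{i,q} sq^i x$ with coefficient living in $\Ext^{1,1,0}_{\cA_*}(\M_2,\M_2)$, matching the four cases in the statement. The argument runs uniformly in both settings because the $\Hi$-structure on the Adams tower, the cell structure of equivariant/motivic projective space, and the comparison of attaching maps with Ext-classes are all formally parallel.

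The main obstacle, as the abstract indicates, is the attaching-map computation for the $C_2$-equivariant projective space (and its motivic counterpart): one must show that the successive attaching maps really are $h_0$ or $\rho h_1$ (or the sum $h_0 + \rho h_1$) in the appropriate Ext group, rather than some higher-filtration correction, and that the parity bookkeeping matches the stated cases. A secondary but nontrivial technical point is setting up the $\Hi$-structure on the Adams tower in enough generality (both equivariantly and motivically) that the extended-power construction of $sq^i$ is compatible with the filtration used to compute $d_2$; without this, the geometric-boundary identification of $\alpha_{i,q}$ with an attaching map in projective space does not literally make sense.
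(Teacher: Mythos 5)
Your proposal follows essentially the same route as the paper: represent the permanent cycle by a map of filtered spectra into the Adams tower, apply the (naive) quadratic construction, use the $\Hi$-structure on the tower to push the resulting filtration forward, identify the extended power of the sphere with a stunted projective space, and read off $d_2$ from the projections of the attaching maps, which are detected in Adams filtration one by $h_0$, $\rho h_1$, or $h_0+\rho h_1$. The two substantive obstacles you isolate (the attaching-map computation and the $\Hi$-structure on the tower) are exactly the two technical theorems the paper proves.

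One point in your sketch deviates from what is actually needed and would change the answer if taken literally: you describe the relevant model of $B\Sigma_2$ as the projective space whose skeleton \emph{alternates} between cells of trivial weight and cells of weight one. That is the model coming from the complete universe ($S^{2\infty,\infty}$), which computes a different extended power $D_2^{\infty}$. The operations $sq^i$ in the statement come from the \emph{naive} extended power, with $E\Sigma_2$ carrying the trivial $C_2$-action (resp.\ a constant simplicial presheaf motivically), so that $D_2^{tr}(S^{p,q})\cong \sus^{p,q}\RP^{\infty+p,q}/\RP^{p-1,q}$ and every cell added as one climbs the filtration has the \emph{same} weight. The dependence of $\alpha_{i,q}$ on the parity of $q$ therefore comes entirely from the weight of the bottom cell $S^{p,q}$ (i.e.\ from the second index of the attaching map $\widetilde f^1_{2p-i,q}$ of $\RP^{2p-i+1,q}$), not from any alternation of weights in the skeleton of $B\Sigma_2$. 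Your closing bookkeeping ("the parity of $q$ controls the weight-shift of the cell where $sq^{i-1}x$ lives") is consistent with the correct model, so the slip is repairable, but with the alternating model the case analysis would come out differently and would not match the stated theorem. A second, smaller imprecision: the attaching maps themselves are the elements $2$, $1+\varepsilon$, $1-\varepsilon$, $0$ of $\pi_{0,0}(S^{0,0})\cong\Z\{1,\varepsilon\}$ (with $\varepsilon$ the desuspended twist), detected respectively by $h_0+\rho h_1$, $\rho h_1$, $h_0$, $0$; they are not literally "$2$ or $\rho$," though $\rho$ does of course enter through the detection of $1+\varepsilon$ by $\rho h_1$.
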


This $\Ext$ is the $E_2$-page of the Adams spectral sequence.
In the $C_2$-equivariant case it is based on equivariant cohomology with coefficients in the ``constant'' Mackey functor taking value $\F_2$.
In the real motivic case the spectral sequence is based on ordinary motivic cohomology with coefficients in $\F_2$.
These $\Ext$-groups are computed in the category of bigraded comodules over the equivariant or motivic dual Steenrod algebra $\cA_*$, or equivalently modules over the equivariant or motivic Steenrod algebra $\cA$.
$\M_2$ denotes the coefficient group of equivariant or motivic (co)homology, of which $\rho$ is an element.
See \Cref{subsec:recollections equivariant coh} for equivariant details and \Cref{subsec: motivic algebraic condition} for motivic details.
Here, $s$ is the Adams filtration, $p$ is the topological dimension, and $q$ is the so called weight which we have in both the $RO(C_2)$-graded and motivic settings.
The elements $h_0$ and $h_1$ come from the elements $sq^1$ and $sq^2$ in the Steenrod algebra.
They correspond to the elements $\tau_0$ and $\xi_1$ in the dual Steenrod algebra, respectively.
These four different cases come from the variation in the attaching map of cells in projective space which depend on $i$ and $q$.

The following is a simple application of these results.
It does rely on computations of the products structure in the relevant $\Ext$-groups, see \cite{DuggerIsaksenLowMWstems}.
\begin{cor}
In the $RO(C_2)$-graded and real motivic Adams spectral sequences we have that
\begin{center}
$d_2(h_2)=h_0h_1^2=0$, $d_2(h_3)=(h_0+\rho h_1)h_2^2=0$, and $d_2(h_4)=(h_0+\rho h_1)h_3^2=h_0 h_3^2\neq=0$.
\end{center}
\end{cor}

This is explained in \Cref{subsec: apps} along with implications in the real motivic Adams spectral sequence.
These sample results are not new, but this proof is.
With greater knowledge of the $C_2$-equivariant $\Ext$ group above even more can be determined.
The results in that subsection only rely on the main result and algebraic Steenrod operations as discussed in \Cref{sec:power ops}.
The interested reader is encouraged to skip to this section.

The main insight of this approach of Bruner is to recognize that $Y$ being an $\Hi$-ring spectrum implies that the Adams filtration is an $\Hi$-filtration, see \Cref{defn:Hinfty filtration} and \Cref{thm: hinfty}.
This implies that the $E_1$-page of the spectral sequence has the necessary structure so that its homology, the $E_2$-page, has power operations as outlined by May in \cite{MaySteenrod} (for more recent exposition see the discussion in chapter 4 of \cite{BrunerAdamsPrimer}).
In particular, we do not feel that these methods or results are specific to the Adams spectral sequence or $\Hi$-ring spectra.
These types of results should be available for other spectral sequences as well, as is evidenced by the work of Bruner and Rognes on the Homotopy Fixed point spectral sequence in \cite{BrunerRognes}.
We believe other cocontinuous monads should be considered also. 
We hope that our exposition here will allow an adaptation of the method to other spectral sequences.

\begin{rmk}
Given that we are working in the equivariant and motivic settings we must say a word about which notions of $\Ei$-ring spectra, and hence $\Hi$-ring spectra, we consider.
We focus on naive $\Ei$-ring spectra as opposed to the theories involving norms.
Equivariantly, these are the operads where the constituents spaces have a trivial $C_2$-action.
Motivically, these are operads that are completely simplicial, that is they are constant presheaves.
Our reason for this is motivated by the structure present on the $E_2$-page of the Adams spectral sequences.
Whichever choice we make, we will look at filtered algebras over the associated monad.
Such a filtered algebra will have an induced structure on the associated graded complex.
This structure will induce Steenrod operations \`a la May on the $E_2$-page.
The equivariant Adams spectral sequence we consider is that of Hu and Kriz, \cite{HuKriz}, which has an $E_2$-page that has an extra grading, but no extra symmetry.
The motivic spectral sequence we consider is that of Dugger and Isaksen which converges to bigraded stable stems as opposed to bigraded homotopy sheaves.
This lack of extra symmetry implies that we should consider the naive $\Ei$-ring spectra.

The richer notions of commutative ring spectra in the equivariant and motivic setting should be considered, perhaps when the input is an $\varepsilon$-commutative ring in the sense of Lemma 2.12 and 2.17 of Hu and Kriz, see \cite{HuKriz}.
We believe that they will be relevant for Mackey functor Adams spectral sequences, perhaps with less trivial coefficients.
\end{rmk}

\subsection{Outline of the paper}
We have relegated our motivic considerations to a single section before our main theorem.
Our reason for doing this is that the equivariant and motivic arguments are very similar while the equivariant setting may be more comfortable or familiar.
Once this material has been digested then it only requires a few references or comments before we can assemble our main theorem in both cases.
We have tried to employ subscripts or superscripts to distinguish between the equivariant and real motivic cases when ambiguity is possible.
However, motivic spectra don't appear before \Cref{sec:motivic} and so objects before hand can be assumed to be $C_2$-equivariant.
The material in \Cref{sec:filt} and \Cref{sec:outline} is not specific to either case.

First, we recall basic facts about equivariant stable homotopy theory.
These include definitions regarding the notion of cell complex as well as relevant facts about the dual $C_2$-equivariant Steenrod algebra $\cA^{C_2}_*$ and  the $RO(C_2)$-graded Adams spectral sequence of Hu and Kriz.
Then in \Cref{sec:filt} we recall basic facts about filtrations and multiplicative structures.
These will be the basic objects that we need to manipulate throughout.
In \Cref{sec:outline}, we outline the general method of the proof and isolate the key criteria that allow for the result.
This will be useful for understanding the paper and providing similar results in other settings.
We show in \Cref{sec:power ops} that the $E_2$-page carries the necessary structure for Bruner's proof of Lemma 2.3 in Chapter 4 of \cite{HRS} to hold.
This implies that the ``big'' Steenrod algebra, where $sq^0$ does not necessarily act as the identity, acts on our $E_2$-page.
\Cref{sec:Hinfty} contains the proof that the Adams tower we consider does indeed possess an $\Hi$-structure as described in \Cref{defn:Hinfty filtration}.
The proof uses obstruction theory for stable model categories as developed in \cite{ChristensenDwyerIsaksenObstModelCats}.
The following two sections deal with the more geometric parts of the proof.
This involves identifying extended powers of representation spheres with truncated equivariant projective spaces in \Cref{sec:Thom}, which we know no way of doing motivically.
We then compute the attaching maps of said projective spaces in \Cref{sec:rppqs}.
This will be all that is necessary to determine the coefficients of the differentials.
With these two final ingredients we are able to assemble our result \Cref{thm:diff} in the final section.
Before this last section we recall necessary motivic results in \Cref{sec:motivic} as well as make the necessary changes to the previous results so that they apply motivically.
In particular, there is motivic analogue of \Cref{sec:Thom} and so instead we use the $C_2$-equivariant Betti realization functor of Heller and Ormsby.
In \Cref{subsec: apps}, we also give some brief sample computations.

Similar equivariant results were obtained in my thesis.
There the results were framed so as to apply to a general $C_2$-equivariant Adams spectral sequence satisfying some hypotheses, see \Cref{hypotheses}.
Here we establish that the $RO(C_2)$-graded Adams spectral sequence constructed by Hu and Kriz in \cite{HuKriz} does indeed satisfy these hypotheses.

\begin{rmk}
Given that we frequently go back and forth between the equivariant and motivic situation, a few words are in order about notation.
We have tried to not overburden the notation with superscripts and subscripts.
The majority of the document is equivariant in nature, by which we mean undecorated objects should be assumed to be $C_2$-equivariant.
There are exceptions to this.
Notably, \Cref{sec:filt} and \Cref{sec:outline} don't depend on a particular context and are valid in both.
In both sections, the $\Hi$-structures are naive in the same sense.
\Cref{sec:motivic} is obviously about motivic phenomena and thus all objects therein should be interpreted as such except when decorated by $C_2$'s.
\Cref{subsec:proof} is written so that the majority of the arguments are independent of context.
This is until we must apply the results of \Cref{sec:rppqs} and \Cref{subsec: motivic extended powers of spheres}, and then we decorate our spheres so as to indicate the relevant context.
In summary, undecorated spectra should be interpreted $C_2$-equivariant spectra.
\end{rmk}

\section*{Acknowledgments}
The equivariant parts of this document contains work I did for my thesis at Wayne State University under the guidance of Robert Bruner.
I would especially like to thank him and Daniel Isaksen for their guidance along with the other members of my committee.
I would like to thank Jeremiah Heller for very helpful real motivic conversations.
I must also thank Tobias Barthel, Jens Hornbostel, Irakli Patchkoria, Eric Peterson, and Dylan Wilson.

\section{Background on equivariant homotopy theory}
This section is a very brief refresher on necessary equivariant facts.
More details can be found in \cite{HHRKervaire}, \cite{HuKriz}, \cite{SchwedeEquivariant}, \cite{Alaska}, and \cite{DwilsonEquivDL}. 
We review some basics about equivariant cell complexes and then recall a few results about ordinary equivariant cohomology.

\subsection{$C_2$-equivariant CW-complexes}
\label{subsec:CWcplexes}
We will make heavy use of this material in \Cref{sec:rppqs}.
We denote the trivial representation by $\R^{1,0}$ and the sign representation by $\R^{1,1}$.
The direct sum $(p-q)\R^{1,0} \dsum (q)\R^{1,1}$ will be denoted by $\R^{p,q}$ where $C_2$ acts by negating the last $q$ coordinates.
Two useful maps to the category of $\R$-vector spaces from the category of $C_2$-representations are the forgetful functor denoted by $\uu$ and the fixed point functor denoted by $\ff$.
Clearly, we have that $\uu(\R^{p,q})=\R^p$ and $\ff(\R^{p,q})=\R^{p-q}$.
These functors extend to functors from equivariant spaces to spaces.
The extension of $\uu$ and $\ff$ preserve cofiber sequences of finite type $C_2$-cell complexes, see Chapters I, IX, and X of \cite{Alaska}.
When we extend these fixed point functors to spectra we will be working with geometric fixed points.
While most of our computations in \Cref{sec:rppqs} take place in the unstable category of $C_2$-cell complexes, the maps we want in the end are elements of the stable homotopy groups of spheres.
Thus we use geometric fixed points $\Phi^G(-)$ which have the property that
\[
\Phi^G(\sus^{\infty}X)\we \sus^{\infty}(X^G).
\]
for any $G$-cell complex $X$.

We will build cell complexes from representation cells, as discussed in the following section.
In particular, we use $S^{p,q}$ to denote $\R^{p,q} \cup \{ \infty \}$.
Thus we have that $\uu(S^{p,q})=S^p$ and $\ff(S^{p,q})=S^{p-q}$. 

\subsubsection{Classical $G$-CW complexes vs. $Rep(G)$-cell complexes}
A $G$-CW-complexes $X$ is built by attaching cells of the form $G/H_+ \wedge D^n$ to $X^n$ along attaching maps 
\[
G/H_+ \wedge S^n \to X^n.
\]
These are the main objects of study in $G$-equivariant homotopy theory.
However, we will be working with spaces built out of representation cells, these are referred to as $Rep(G)$-cell complexes in the literature.
$Rep(G)$-cell complexes are built out of cells of the form $S(V)=\{v\in V\vert \left\langle v,v \right\rangle=1 \}$ and $D(V)=\{v\in V\vert \left\langle v,v \right\rangle \leq 1 \}$ for $V$ a $G$ representation with $G$-invariant inner product.
Our $S^{p,q}$'s are such spaces as they are equivariantly homeomorphic to $D(\R^{p,q})/S(\R^{p,q})$.
Also, if $p-1\geq q$, then $\partial D(\R^{p,q})$ as $S^{p-1,q}$.

It turns out that each $Rep(C_2)$-cell complex is a $C_2$-CW complex, and vice versa.
As $Rep(C_2)$-cell complexes are built from cofiber sequences of $S^{p,q}$'s, it will be sufficient to show that each of these is a $C_2$-CW complex.
Note that $S^0\iso (C_2/C_2)_+$ as a pointed $C_2$-CW complex.
The underlying space of the cofiber of 
\[
C_{2+} \lra S^0
\]
is $S^1$ with $C_2$-action equivalent to complex conjugation restricted to the unit circle in $\mathbb{C}$.
This is precisely the $S^{1,1}$ we have described above.
Similarly, $S^{1,0}$ is constructed by attaching $(C_2/C_2)_+ \wedge D^1$ to a single point $+$ along the collapse map 
\[
S^0 \lra +.
\]
These same cofiber sequences show that the $C_2$-CW cells $C_{2+}\wedge S^{n,0}$ and $C_2/C_{2+}\wedge S^{n,0}$ are also $Rep(C_2)$-cell complexes.
Each $S^{p,q}$ is the smash product $(S^{1,0})^{\wedge p-q} \wedge (S^{1,1})^{\wedge q}$.
Since $C_2$-CW complexes are closed under smash products we have that each $S^{p,q}$ is a $C_2$-CW complex.

This is particular to the case of $G=C_2$.
For general finite $G$ the issue is more subtle.
In fact, this is not true for other cyclic groups.
Our motivation for working with $Rep(C_2)$-cell complexes is that equivariant stable homotopy groups are computed as maps out of representation spheres.
When we investigate extended powers of representation spheres the resulting objects will be $Rep(C_2)$-cell complexes.
The above argument also implies that a map inducing an isomorphism in $RO(C_2)$-graded homotopy groups is a weak equivalence.

\subsection{Recollections on ``ordinary'' Equivariant Cohomology}
\label{subsec:recollections equivariant coh}
Our Adams spectral sequence is based on ordinary equivariant cohomology with coefficients in the ``constant'' Mackey functor $\eF_2$ (the value of the functor is constant but the maps are not all the identity).
Our invariants also take values in $RO(C_2)$-graded abelian groups, which are just bigraded groups.
Our grading convention is dictated by the notation in the previous subsection, and therefore will maximize compatibility with the Betti realization map from motivic homotopy theory over $\R$.

We then have the following results regarding ordinary equivariant homology.

\begin{prop} [Hu-Kriz]
\label{prop:Hu Kriz}
The equivariant homology of a point is given by
\[
\M_2:=\pi_{**}\eHF_2\iso \F_2 [\tau,\rho]\dsum \F_2 \{\frac{\theta}{\tau^i\rho^j}\}
\]
for every $i,j \in \Z_{\geq 0}$.
The elements $\tau$, $\rho$, and $\theta$ are in bidegrees $(-1,0)$, $(-1,-1)$, and $(2,0)$ respectively.
\end{prop}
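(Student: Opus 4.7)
The plan is to compute $\pi_{**}\eHF_2$ by repeatedly using the fundamental cofiber sequence
\[
(C_2)_+ \lra S^0 \llra{\rho} S^{1,1},
\]
smashed with $\eHF_2$, together with the fact that $\eHF_2 \wedge (C_2)_+$ represents a copy of non-equivariant $\mathrm{H}\F_2$-homology. Applying $\pi_{p,q}(-)$ yields long exact sequences relating $\pi_{p,q}\eHF_2$ and $\pi_{p+1,q+1}\eHF_2$ via the connecting map $\rho$. The elements $\tau$ and $\rho$ can then be identified directly: $\rho$ is the class of the stable map $S^0 \to S^{1,1}$ appearing in the cofiber sequence, and $\tau$ is the generator of $\pi_{-1,0}\eHF_2$ coming from the underlying (non-equivariant) equivalence $\uu(\eHF_2) \simeq \HH\F_2$, where the sign representation becomes orientable.

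First I would compute $\pi_{p,q}\eHF_2$ in the ``positive cone'' where $p \leq 0$ and $q \leq 0$ (in my bidegree conventions). In this region the long exact sequence breaks into short exact sequences, and induction on $|q|$ produces $\F_2[\tau,\rho]$ as a polynomial subring: multiplication by $\rho$ is injective, cokernels are detected non-equivariantly by powers of $\tau$, and no extensions or new generators appear. This is where the naive calculation works cleanly.

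Next I would handle the ``negative cone.'' Here the cofiber-sequence long exact sequences fail to be short exact; multiplication by $\rho$ and $\tau$ both have nontrivial kernels, and the corresponding cokernels force the existence of a new class. I would use the isotropy separation (Tate) square for $\eHF_2$, together with the computation of geometric fixed points $\Phi^{C_2}\eHF_2 \simeq \HH\F_2[\rho^{\pm 1}]$, to detect a single generator $\theta$ in bidegree $(2,0)$ whose divisions by all monomials $\tau^i\rho^j$ (with $i,j \geq 0$) survive. The relations $\tau \cdot (\theta/\tau^{i}\rho^{j}) = \theta/\tau^{i-1}\rho^{j}$ and $\rho \cdot (\theta/\tau^{i}\rho^{j}) = \theta/\tau^{i}\rho^{j-1}$ on the boundary, and vanishing when either exponent becomes $-1$, are then forced by exactness.

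The main obstacle is the negative-cone analysis: one must rule out any further generators and confirm that the two summands do not interact multiplicatively (so that $\tau^i \rho^j \cdot \theta/\tau^a \rho^b = 0$ whenever the total exponent becomes non-negative). This is precisely the content of the Tate-square computation, and it is the step that relies on a genuine input beyond the cellular long exact sequences, namely the explicit identification of $\Phi^{C_2}\eHF_2$ and the geometric fixed-point comparison map. Once these are in hand, assembling the additive decomposition and the multiplicative structure claimed in the proposition is routine.
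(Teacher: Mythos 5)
The paper does not supply a proof of this proposition; it is a computation of Hu and Kriz cited without argument (the surrounding text only records the notational dictionary $\rho \leftrightarrow a$, $\tau \leftrightarrow \sigma^{-1}$ and explains that $\theta$ comes from Hu--Kriz's $H^f = E\Z_{2+}\wedge\eHF_2$). Your sketch is therefore an independent argument, and the strategy you describe---the cofiber sequence $(C_2)_+ \to S^0 \to S^{1,1}$ smashed with $\eHF_2$ to build the positive cone inductively, followed by isotropy separation to detect the negative cone---is the standard route in the literature and is sound in outline.

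A few details deserve care. First, the identification ``$\Phi^{C_2}\eHF_2 \simeq \HH\F_2[\rho^{\pm 1}]$'' is loosely stated: $\Phi^{C_2}\eHF_2$ is a nonequivariant spectrum whose integer-graded homotopy is the polynomial ring $\F_2[b]$ with $|b|=1$ (the image of $\tau\rho^{-1}$ under $\rho$-inversion). What the isotropy-separation square actually furnishes, and what you want to compare against $\M_2$, is the $RO(C_2)$-graded localization $\pi_{**}(\widetilde{E}C_2 \wedge \eHF_2) \iso \rho^{-1}\M_2 \iso \F_2[\tau,\rho^{\pm 1}]$, from which the negative cone is read off as the $\rho$-torsion not seen by the Borel-completed corners. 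Second, your phrase ``vanishing when the total exponent becomes non-negative'' misstates the module structure: the correct rule is $\tau^i\rho^j \cdot \theta/(\tau^a\rho^b) = \theta/(\tau^{a-i}\rho^{b-j})$ when $i\leq a$ and $j\leq b$, and zero as soon as either exponent would go negative; in particular $\tau\theta = 0 = \rho\theta$, so $\theta$ itself is already $\tau$- and $\rho$-torsion, which is exactly what the paper records after the statement. Finally, the bidegree $(-1,0)$ assigned to $\tau$ in the statement is inconsistent with $|\tau_0|=(1,0)$ and the relation $\tau_0\rho = \alpha + \tau$ in \Cref{thm:flatness}; those force $|\tau|=(0,-1)$, matching the motivic and orientation-class conventions. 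Your sketch inherits the printed degree, so it is not a flaw in your argument, but the bidegree should be corrected.
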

Note that the class $\theta$ is infinitely $\tau$ and $\rho$ divisible as well as $\tau$ and $\rho$ torsion.
This ``negative'' cone is a complication not found in the motivic setting and gives rise to algebraic difficulties.
Our notation differs from theirs, our $\rho$ and $\tau$ correspond to their $a$ and $\sigma^{-1}$ respectively.
The class $\theta$ is not given a name by Hu and Kriz but it comes from their spectrum $H^f$, which is their notation for $E\Z_{2+}\wedge \eHF_2$.

\begin{thm} [Hu-Kriz]
\label{thm:flatness}
The equivariant dual Steenrod algebra is 
\[
\cA^{C_2}_*:=\pi_{**}\eHF_{2}\wedge\eHF_2 \iso \frac{\M_2[\xi_i,\tau_i,\alpha]}{\tau_0\rho=\alpha+\tau,\tau_i^2=\tau_{i+1}\rho+\xi_{i+1}\alpha}
\]
where $i\in \Z_{\geq0}$.
The dual Steenrod $\cA^{C_2}_*$ is flat and even free over $\M_2$.
\end{thm}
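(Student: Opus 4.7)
The plan is to adapt Milnor's computation of the classical dual Steenrod algebra to the $C_2$-equivariant setting, working at the $\M_2$-module level throughout. The strategy splits into three broad steps: compute the cohomology of an equivariant $\RP^{\infty}$, read off the algebra generators from the coaction on this input, and verify freeness over $\M_2$.

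First I would compute $\eHF_2^{**}(\RP^{\infty}_{C_2})$, where $\RP^{\infty}_{C_2}$ is the $Rep(C_2)$-cell complex obtained as the infinite projective space of the regular representation (one representation cell $S^{n,n}$ in each dimension, with attaching maps governed by the equivariant geometry discussed in \Cref{subsec:CWcplexes}). This should come out to an $\M_2$-algebra on a class of bidegree $(1,1)$ together with its Bockstein class in bidegree $(2,1)$, subject to a quadratic relation that records the nontrivial equivariant attaching map — classically one has $x^2 = 0$, but equivariantly there are correction terms involving $\rho$ and $\tau$. The Künneth formula applied to finite products $(\RP^{\infty}_{C_2})^{\times n}$ then provides the actual Milnor input.

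Next, the generators of $\cA_*^{C_2}$ are extracted from the coaction on the canonical generator, following Milnor's pattern verbatim in spirit. The classes $\xi_i$ and $\tau_i$ appear as coefficients of the polynomial and Bockstein generators respectively in $\psi(x)$; the exotic class $\alpha$ and the relation $\tau_0 \rho = \alpha + \tau$ emerge as the correction forced by the equivariant quadratic relation above (absent classically). Applying coassociativity to $\psi(x^2)$ versus $\psi(x)^2$, together with Milnor's diagonal/Cartan relations, then yields the further relation $\tau_i^2 = \tau_{i+1}\rho + \xi_{i+1}\alpha$ inductively in $i$. Completeness of this list of generators and relations should follow from a dimension count of each bidegree against the Künneth computation.

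Finally, for the flatness and freeness claim over $\M_2$, I would filter $\eHF_2 \wedge \eHF_2$ by the $Rep(C_2)$-skeletal filtration on one factor and inductively identify each associated graded piece with a free $\M_2$-summand indexed by a monomial in $\xi_i$, $\tau_i$, $\alpha$ modulo the stated relations. I expect this to be the main obstacle: the qualitatively new feature over Milnor's classical argument is controlling the negative cone $\M_2\{\theta/\tau^i \rho^j\}$ of $\M_2$ and ruling out spurious infinitely-divisible $\theta$-type contributions. The cleanest route is to split the analysis along the Tate square for $\eHF_2$ — treating the Borel completion piece (where the classical dual Steenrod algebra appears with the relevant formal variables adjoined) and the geometric fixed point piece $\Phi^{C_2}(\eHF_2 \wedge \eHF_2)$ (which records the $\rho$-inverted behavior) separately, then gluing — since freeness on each face of the square forces freeness of the total.
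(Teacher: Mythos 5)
The theorem you are asked to prove is not proved in this paper at all: it is a recollection of Hu--Kriz, and the paper explicitly cites Theorem~6.41 and Corollary~6.45 of \cite{HuKriz} as the source. So there is no internal argument to compare against; what matters is whether your sketch would actually work.

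The first two steps of your plan (compute $\eHF_2^{**}$ of the equivariant classifying space $B_{C_2}\Z/2$ as a $Rep(C_2)$-cell complex, then extract generators and relations from the coaction à la Milnor) are faithful to what Hu and Kriz actually do, modulo one misstatement: classically $H^*(\RP^\infty;\F_2)=\F_2[x]$ is polynomial, with \emph{no} relation $x^2=0$. The genuine equivariant phenomenon is that the degree-$(1,1)$ class $c$ and its Bockstein $b$ satisfy a relation of the shape $c^2=\rho c+\tau b$, which has no classical analogue rather than being a ``correction'' to $x^2=0$; getting the precise form of this relation right is what forces the relation $\tau_0\rho=\alpha+\tau$ downstream, so the misremembered classical baseline would propagate errors if carried out literally. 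The more serious gap is in the final freeness step. You write that ``freeness on each face of the square forces freeness of the total,'' but this is not a valid principle: a homotopy pullback square only yields a long exact sequence on $\pi_{**}$, and extensions of free $\M_2$-modules by free $\M_2$-modules need not split, nor does freeness descend along pullbacks. The negative cone $\F_2\{\theta/\tau^i\rho^j\}$ is exactly the kind of torsion, infinitely-divisible behavior that can appear as an extension class in such a Mayer--Vietoris sequence, so the Tate square alone cannot rule it out. What Hu and Kriz actually do is more hands-on: they produce an explicit candidate $\M_2$-basis of monomials, build a map from the corresponding free $\M_2$-module into $\pi_{**}(\eHF_2\wedge\eHF_2)$, and verify it is an isomorphism by a filtration/comparison argument. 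Your Tate-square route would need to be replaced by (or supplemented with) such an explicit basis-and-comparison argument to actually establish freeness.
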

This is proved as Theorem 6.41 and Corollary 6.45 of \cite{HuKriz}.
The class $\alpha$ is given by the right unit applied to our class $\tau$, Hu and Kriz denote the class $\rho^{-1}$.
The bidegree of $\xi_i$ is $(2^{i+1}-2,2^i-1)$ and the bidegree of $\tau_i$ is $(2^{i+1}-1,2^i-1)$
Hu and Kriz also describe the rest of the Hopf algebroid structure, but that won't be relevant here.
The flatness is crucial though, as it provides for the following corollary.

\begin{cor}
For a finite $C_2$-spectrum $X$ there is a convergent Adams spectral sequence
\[
\Ext_{\cA^{C_2}_*}(\M_2 ,\mathrm{H}_{**}(X;\eF_2) ) \ssra (\pi_{**}(X))^{ \wedge }_2.
\]
\end{cor}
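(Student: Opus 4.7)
The plan is to follow the standard Adams construction, using flatness of $\cA^{C_2}_*$ over $\M_2$ (\Cref{thm:flatness}) in place of its classical counterpart. Let $\bar{\eHF}_2$ denote the fiber of the unit map $S^{0,0} \to \eHF_2$. Smashing with $X$ and iterating yields the canonical Adams tower
\[
X \la \bar{\eHF}_2 \wedge X \la \bar{\eHF}_2^{\wedge 2} \wedge X \la \cdots
\]
whose associated cofibers are $\eHF_2 \wedge \bar{\eHF}_2^{\wedge s} \wedge X$. The long exact sequences from the cofibrations $\bar{\eHF}_2^{\wedge s+1} \to \bar{\eHF}_2^{\wedge s} \to \eHF_2 \wedge \bar{\eHF}_2^{\wedge s}$ assemble into an exact couple, giving a spectral sequence with $E_1$-page $\pi_{**}(\eHF_2 \wedge \bar{\eHF}_2^{\wedge s} \wedge X)$.

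The first step where flatness enters is the identification of the $E_1$-page with a cobar complex. Because $\cA^{C_2}_* = \pi_{**}(\eHF_2 \wedge \eHF_2)$ is free (hence flat) as a module over $\M_2 = \pi_{**}(\eHF_2)$ on either side, a K\"unneth argument gives
\[
\pi_{**}(\eHF_2^{\wedge s+1} \wedge X) \;\iso\; (\cA^{C_2}_*)^{\otimes_{\M_2} s} \tensor_{\M_2} \HH_{**}(X;\eF_2)
\]
and similarly for $\bar{\eHF}_2$, so the $E_1$-page is the (reduced) cobar complex computing $\Ext_{\cA^{C_2}_*}(\M_2, \HH_{**}(X;\eF_2))$ in the category of $RO(C_2)$-graded $\cA^{C_2}_*$-comodules. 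Taking homology of $d_1$ then identifies $E_2$ with the claimed $\Ext$.

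For convergence, I would argue as in the classical case via the Bousfield/Adams convergence machinery: since $X$ is finite, the tower $\{\bar{\eHF}_2^{\wedge s} \wedge X\}$ has Adams filtration quotients whose homotopy limit is the $\eHF_2$-nilpotent completion of $X$, and one identifies this with the $2$-completion $(\pi_{**}X)^{\wedge}_2$. Finiteness of $X$ and the fact that $\eHF_2$ is a connective ring spectrum with $\pi_{0,0}\eHF_2 = \F_2$ make the standard nilpotent completion arguments go through. Strictly speaking this last identification is the delicate point, but for a finite $C_2$-spectrum the analogue of the classical Bousfield argument applies because $\bar{\eHF}_2 \wedge X$ is connective in the appropriate equivariant sense and iterated smashing increases connectivity.

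The main obstacle is convergence rather than the construction of the $E_2$-page: identifying the $\eHF_2$-nilpotent completion with the ordinary $2$-completion in the $RO(C_2)$-graded setting must be done carefully, since $\pi_{**}\eHF_2$ has the negative cone described in \Cref{prop:Hu Kriz} which prevents naive connectivity-based arguments. Once flatness is in hand the $\Ext$ description is essentially formal.
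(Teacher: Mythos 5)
Your proposal matches the paper's approach: the paper simply states that the spectral sequence is ``constructed in the standard way'' with convergence ``derived using the classical proof of convergence as a guide,'' which is precisely the canonical Adams tower you spell out, using flatness (indeed freeness) of $\cA^{C_2}_*$ over $\M_2$ from \Cref{thm:flatness} to collapse the relevant K\"unneth spectral sequences and identify the $E_1$-page with the cobar complex. You are in fact more explicit than the paper and rightly flag the genuine subtlety in the convergence step arising from the negative cone in $\M_2$ (see \Cref{prop:Hu Kriz}); the paper defers this point to the classical argument and to Hu--Kriz without elaboration.
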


This spectral sequence is constructed in the standard way.
The convergence properties can be derived using the classical proof of convergence as a guide.
The freeness also allows us to directly apply the work of Bruner to construct an action of algebraic Steenrod operations on the $E_2$-page of this spectral sequence.
For more details, see \Cref{sec:power ops}.

We will use a few other results about equivariant cohomology.
Kronholm establishes in \cite{KronholmFree} that the (co)homology of a finite $Rep(C_2)$-cell complex is always free as a module over the (co)homology of a point.
This paired with the Universal Coefficient and K\"unneth spectral sequences of Lewis and Mandell developed in \cite{LewisMandellUCandKSS} will allow us to prove \Cref{prop: splitting}.
We will refer to these as the $RO(C_2)$-graded UCSS and $RO(C_2)$-graded KSS respectively, this is in spite of the fact that we are indeed referring to the full $RO(C_2)$-graded Mackey functor version of the spectral sequence.
However, all applications will be applied to free $RO(C_2)$-graded $\M_2$-modules which in turn are end up being free $RO(C_2)$-graded $\M_2$-modules in Mackey functors
This works by noting that if an $RO(C_2)$-graded abelian group is free as an $\M_2$-module then it is also free as an $\M_2$-module object in $RO(C_2)$-graded Mackey functors.
Thus the spectral sequences collapse and are determined by the underlying $RO(C_2)$-graded $\M_2$-module as we are always working in the category of $\M_2$-modules.

\begin{lemma}
\label{lemma:equi splitting}
If $X$ is a $C_2$-equivariant cellular spectrum whose $RO(C_2)$-graded homology is free over $\M_2$ then $X\wedge \eHF_2$ splits as a wedge of suspensions of $\eHF_2$'s.
In particular, it's homology is free as an $RO(C_2)$-graded $\M_2$-module in Mackey functors.
\end{lemma}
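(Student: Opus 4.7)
The plan is to split off one copy of $\eHF_2$ for each element of a chosen free $\M_2$-basis of $H_{**}(X;\eF_2)$, verify that the resulting map is an isomorphism on $RO(C_2)$-graded homotopy, and then invoke the equivariant Whitehead principle recorded at the end of \Cref{subsec:CWcplexes}.

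First, since $H_{**}(X;\eF_2) = \pi_{**}(X\wedge \eHF_2)$ is assumed free over $\M_2$, choose a basis $\{x_\alpha\}_{\alpha\in I}$, with $x_\alpha$ in bidegree $(p_\alpha,q_\alpha)$. Each basis element is represented by a map $S^{p_\alpha,q_\alpha}\to X\wedge \eHF_2$. Smashing with $\eHF_2$ and using the multiplication $\mu\colon \eHF_2\wedge \eHF_2\to \eHF_2$ on the right-hand factor produces an $\eHF_2$-module map
\[
\widetilde{x}_\alpha \colon \sus^{p_\alpha,q_\alpha}\eHF_2 \lra X\wedge \eHF_2.
\]
Wedging these together yields a map of $\eHF_2$-module spectra
\[
\phi \colon \bigvee_{\alpha\in I}\sus^{p_\alpha,q_\alpha}\eHF_2 \lra X\wedge \eHF_2.
\]

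Next I would compute $\pi_{**}$ of both sides. On the source, the homotopy is $\bigoplus_\alpha \sus^{p_\alpha,q_\alpha}\M_2$, since $\pi_{**}\eHF_2=\M_2$ by \Cref{prop:Hu Kriz}. On the target it is $H_{**}(X;\eF_2)$. By construction $\phi_*$ sends the generator of the $\alpha$-th summand to $x_\alpha$, so since the $\{x_\alpha\}$ were chosen as a free $\M_2$-basis, the map $\phi_*$ is an isomorphism. Applying the equivariant Whitehead statement from \Cref{subsec:CWcplexes} (a map inducing an isomorphism on $RO(C_2)$-graded homotopy groups between cell complexes is a weak equivalence) shows that $\phi$ is an equivalence, giving the desired splitting. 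For the ``in particular'' clause, once $X\wedge\eHF_2$ is a wedge of suspensions of $\eHF_2$, its Mackey-functor-valued homotopy is a direct sum of bigraded shifts of the Mackey functor underlying $\eHF_2$, and so is free as an $\M_2$-module in Mackey functors, as recorded just above the lemma statement.

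The main obstacle I anticipate is justifying the Whitehead-style conclusion when the indexing set $I$ is infinite and the grading is the full $RO(C_2)$ lattice. This is where the reduction of $Rep(C_2)$-cell complexes to $C_2$-CW complexes from \Cref{subsec:CWcplexes} is essential: both $\phi$'s source and $X\wedge\eHF_2$ are cell complexes, so detection of equivalence by $RO(C_2)$-graded homotopy applies. A secondary point to verify is that the multiplication on $\eHF_2$ used to extend each $x_\alpha$ to a module map is well defined up to homotopy, which follows from $\eHF_2$ being a (homotopy) commutative ring spectrum.
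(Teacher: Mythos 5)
Your proposal is correct and follows essentially the same route as the paper: realize a free $\M_2$-basis of $H_{**}(X;\eF_2)$ by a map $\bigvee_\alpha \sus^{p_\alpha,q_\alpha}\eHF_2 \to X\wedge\eHF_2$, observe it is a $\pi_{**}$-isomorphism, and conclude it is an equivalence because both sides are cellular and, for $C_2$, $RO(C_2)$-graded homotopy detects weak equivalences of cell spectra (the point recorded at the end of \Cref{subsec:CWcplexes}). The paper's proof is a condensed version of exactly this argument.
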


That the homology is free gives produces a map
\[
\bigvee S^{\alpha}\wedge \eHF_2 \lra X \wedge \eHF_2
\]
which will induce an isomorphism after applying $\pi_{**}$.
Since $X$ is a cellular spectrum such a map is a weak equivalence.
This follows from the fact that stably the class of orbit and representation spheres coincide, which is specific to the group $C_2$.
We learned this from Justin Noel's answer \cite{NoelMOanswer}.

Thus many equivariant spectra split when smashed with $\eHF_2$.
We will use this explicitly in order to construct the $\Hi$-structure of the Adams filtration in \Cref{thm: hinfty}.

\section{Background on filtered spectra, monoidal structures}
\label{sec:filt}
Here we recall a few constructions with filtered spectra that will be necessary.
These definitions are necessary for the outline of the method in the next section.
We work in the category of $C_2$-equivariant Orthogonal spectra as described in \cite{HHRKervaire}.
However, these definitions are not particular to the model of (motivic, equivariant, or classical) spectra being used.
Much of these definitions are inspired by the work of Bruner in \cite{HRS}, their seeds can be found there.

\begin{defn}
A filtered spectrum or filtration is a sequence of cofibrations
\[
\cdots \hra Y_{i-1} \hra Y_i \hra Y_{i+1} \hra \cdots.
\]
We denote the single filtered object as $\Yb$.
The associated graded complex of a filtered spectrum $\Yb$ is the complex of spectra
\[
\cdots \cla Y_{i-1}/Y_{i-2} \cla Y_i/Y_{i-1} \cla Y_{i+1}/Y_i \cla \cdots
\]
denoted by $E^0(\Yb)$.
\end{defn}

We use 
\[
A \cra B
\]
as a short hand for a map that changes degree, for example
\[
A \lra \sus B.
\]
By $Y_i/Y_{i-1}$ we mean the cofiber of the cofibration
\[
Y_{i-1} \hra Y_i
\]
that is part of the data of the filtered spectrum $\Yb$.
The maps on the associated graded complex
\[
Y_{i+1}/Y_i \cra Y_i/Y_{i-1}
\]
are defined as the composites
\[
Y_{i+1}/Y_i \ra \sus Y_i \ra \sus Y_i/Y_{i-1}.
\]
We will also call a filtration cellular when each $(Y_{i+1},Y_i)$ is a relative cell pair.
Recall that this means that $Y_{i+1}$ is obtained from $Y_i$ by attaching cells.
Here we have adopted the convention that filtrations are increasing, this is in contrast with much of the literature on the Adams spectral sequence.
This choice will clarify the $\Hi$-structure on the filtration.
We also use the term complex of spectra to mean a sequence where the composite of two successive maps has a preferred null-homotopy.
Every definition below has the necessary property that the familiar notion in complexes is recovered by applying the functor $E^0(-)$ or in spectra by applying the functor $\colim_{\bu}(-)$.

The category of filtered spectra has its own notion of homotopy as well as a natural monoidal structure.
By $c(X)_\bu$ we denote the constant filtration for any spectrum $X$.

\begin{defn}
The \underline{smash product of two filtrations} $\Xb$ and $\Yb$ is denoted by $\Gb(\Xb,\Yb)$.
The $n$th term in the filtration is 
\[
\Gamma_n(\Xb,\Yb):=\colim_{i+j\leq n}X_i\wedge Y_j.
\]
We will also denote iterated smash products of $r$-filtrations $\Xb^1,\Xb^2,\ldots,\Xb^r$ by $\Gamma^r(\Xb^1,\Xb^2,\ldots,\Xb^r)_{\bu}$.
Here, the $n$th term in the filtration is
\[
\Gamma^r_n(\Xb^1,\Xb^2,\ldots,\Xb^r):=\bigcup_{\sum_{l=1}^r\alpha_l=n} X^1_{\alpha_1} \wedge X^2_{\alpha_2}\wedge \ldots \wedge X^r_{\alpha_r}.
\]
If all of the $\Xb^i$ are the same filtration, we will use the symbol $\Gb^r$ or simply $\Gb$ when $r=2$.
\end{defn}

Here we use $\cup$ to denote a colimit.
This notation is inspired by the case where we have sequential inclusion of cell complexes.
When $\Xb$ and $\Yb$ are filtrations concentrated in positive degrees so that $X_i=Y_i=\ast$ $\forall i<0$ the above definition takes the form
\[
\Gamma_n(\Xb,\Yb)=X_0 \wedge Y_n \cup_{X_0\wedge Y_{n-1}} X_1 \wedge Y_{n-1} \cup \ldots \cup_{X_{n-1}\wedge Y_0} X_n \wedge Y_0.
\]
If both $\Xb$ and $\Yb$ are concentrated in negative degrees, such as the Adams tower, so that $X_i=X_0$ and $Y_i=Y_0$ $\forall i>0$ then we have
\[
\Gamma_n(\Xb,\Yb)=X_0 \wedge Y_n \cup_{X_0\wedge Y_{n+1}} X_{-1} \wedge Y_{n+1} \cup \ldots \cup_{X_{n+1}\wedge Y_0} X_n \wedge Y_0
\]
where $n$ is negative.
As mentioned above, this definition has the feature that
\[
E^0(\Gb(\Xb,\Yb))_n\we (E^0(\Xb)\tensor E^0(\Yb))_n:=\bigvee_{i+j=n}X_i/X_{i-1}\wedge Y_j/Y_{j-1} 
\]
where the tensor is the graded tensor product of complexes of spectra whenever the filtrations are free or Adams towers.
Given the freeness or projectivity (in the case of the Adams tower), the relevant K\"unneth spectral sequence collapses.
This fact will follow from one of the assumptions we make about the category of spectra we work with, the Geometric Condition, see \Cref{defn:geometric condition}.
Before we give the definition of multiplicative and $\Hi$-filtration, we need one more definition.

\begin{defn}
\label{defn:homotopy of filtered modules}
 Let $f_{\bu},g_{\bu}:\Ab \lra \Bb$ be two maps of filtered spectra.
 We call $H_{\bu}:\Gb(I_{\bu}, \Ab) \lra \Bb$ a \underline{filtered homotopy} from $f_{\bu}$ to $g_{\bu}$ if the following diagram commutes.
 \[
  \xymatrix{
  \Gb(c({0}_+)_{\bu}, \Ab) \we \Ab \ar[drr] \ar@/^1pc/[drrrrr]^{f_{\bu}}&
  &
  &
  &
  &
  \\
  &
  &
  \Gb(I_{\bu}, \Ab) \ar[rrr]^{H_{\bu}} \ar[rrr]&
  &
  &
  \Bb\\
  \Gb(c({1}_+)_{\bu}, \Ab) \we \Ab \ar[urr] \ar@/_1pc/[urrrrr]_{g_{\bu}}&
  &
  &
  &
  &
  }
 \]
Here, $I_{\bu}$ is the filtered spectrum coming from the standard cellular structure on the unit interval and $c(X)_{\bu}$ is the constant filtration where every map is the identity.
\end{defn}

In filtration $0$ we have $I_0:=\{0,1\}_+\we S^0\vee S^0$.
In filtration $n$ we have that $I_n:=I_+$ and the maps in the filtration are the obvious inclusions.
The associated graded of a filtered homotopy is a chain homotopy.

We can now give definitions of multiplicative and $\Hi$-filtrations.
These will give rise to multiplicative spectral sequences and extra structure, which is the whole point of this paper.
Note that the actual structure is given by some maps of filtrations, in the honest sense.
However, we only require that coherences hold up to filtered homotopy.
As we will be applying homotopy invariant functors in order to obtain our spectral sequences this will not cause any trouble.
Further, our methods of constructing multiplicative or $\Hi$-structures is only capable of giving providing coherence in the homotopy category.

\begin{defn}
We say that a filtration $\Xb$ is \underline{multiplicative} if there is a map of filtrations
\[
\xymatrix{
\Gb(\Xb,\Xb)=\Gb \ar[rr]^{\mu_{\bu}}&
&
\Xb.
}
\]
In particular, we require maps $\mu_n: \Gamma_n \to X_n$ such that
\[
\xymatrix{
\Gamma_{n-1} \ar[rr] \ar[dd]_{\mu_{n-1}}&
&
\Gamma_n \ar[dd]^{\mu_n}\\
&
&
\\
X_{n-1}\ar[rr]&
&
X_n
}
\]
commutes.
We also require that $\mu_{\bu}$ satisfy the obvious associativity condition up to filtered homotopy.
\end{defn}

Such multiplicative filtrations give spectral sequences which are multiplicative, that is that $d_r$ satisfies the Leibniz formula.
Filtrations are frequently multiplicative when the object being filtered has a (potentially weak in the sense of $A_{\infty}$) associative product structure.
When said object is a commutative or an $\Ei$-algebra, we frequently get the following notion.

\begin{defn}
\label{defn:Hinfty filtration}
We say that a filtration $\Xb$ is \underline{$\Hi$} or has an \underline{$\Hi$-structure} if there are maps of filtrations
\[
\xi_r:\Gamma_{\bu h\Sigma_r}^r:=\Gamma_{\bu}(E\Sigma_{r+}^{(\bu)},\Gb^r(\Xb))_{\Sigma_r} \lra \Xb
\]
that are compatible in the sense of Chapter 1 definition 3.1 of \cite{HRS}.
In particular, we have maps 
\[
\xi_r^n:\Gamma_{n}(E\Sigma_{r+}^{(\bu)},\Gamma_{\bu}^r)_{\Sigma_r}=\bigcup_{i+j=n}E\Sigma^{(i)}_{r+}\wedge_{\Sigma_r} (\bigcup_{\sum_{l=1}^r\alpha_l=j} X_{\alpha_1} \wedge X_{\alpha_2}\wedge \ldots \wedge X_{\alpha_r}) \to X_n.
\]
More explicitly, we require 
\[
\xi_r^{n,m}:E\Sigma^{(m)}_{r+}\wedge_{\Sigma_r}(\Gamma_n^r) \to X_{n+m}.
\]
for every $r,m,$ and $n$.
The $\xi_r^{n,m}$'s must be compatible in the sense that
\[
\xymatrix{
\displaystyle E\Sigma_{r+}^{(n)}\wedge_{\Sigma_r} \Gamma^r_{m-1} \bigcup_{E\Sigma_{r+}^{(n-1)}\wedge_{\Sigma_r} \Gamma^r_{m-1}} E\Sigma_{r+}^{(n-1)} \wedge_{\Sigma_r} \Gamma^r_m \ar[dd]_{\xi_r^{n,m-1}\cup_{\xi_r^{n-1,m-1}} \xi_r^{n-1,m}} \ar[rr]&
&
E\Sigma_{r+}^{(n)} \wedge_{\Sigma_r} \Gamma^r_m \ar[dd]^{\xi_r^{n,m}}\\
&
&
\\
X_{n+m-1} \ar[rr]&
&
X_{n+m}
\\
}
\]
commutes for all $r,n$ and $m$.
\end{defn}

This notion of an $\Hi$-filtration can be traced back to the work of Bruner in \cite{HRS}, see in particular Section 5 of Chapter 4.
We will frequently abbreviate $E\pi_{r+}^{(m)}\wedge_{\pi} \Gamma^r_n$ as $Z_{m,n}$ for $\pi\subset \Sigma_r$.
We will also use $\tGb^r$ as an abbreviation for $\Gamma^r_{\bu h\Sigma_r}$ so that
\[
\tG^r_n= \colim_{i+j\leq n} Z_{i,j}
\]
by definition, where here $\pi=\Sigma_r$. 
The compatibilities between different extended powers are only required to hold up to filtered homotopy, hence $\Hi$-structures and not $\Ei$-structures.
Clearly, every $\Hi$-filtration has an underlying multiplicative filtration by restricting to the $0$-skeleton of $E\Sigma_r$.

\begin{rmk}
One might offer a different definition where each $X_i$ is either an associative algebra or an $\Hi$-algebra.
Such a filtration would have the property that differentials commute with the operations in the strictest sense.
For example, $d_r$ would be a ring homomorphism as opposed to satisfying the Leibniz formula.
This would be a filtration \textit{by} algebras or $\Hi$-algebras which is a different variety of spectral sequence that has been useful as well, see \cite{LigaardMadsen} for example.
\end{rmk}

\begin{rmk}
\label{rmk:why naive}
At this point we must address our choice of $E\Sigma_r$.
In the motivic and equivariant settings there are multiple different choices of universal spaces with the same classical homotopy type $E\Sigma_r$.
We have chosen to work with universal spaces that have a trivial equivariant structure.
This choice is forced by the structure present on the $E_1$-page.
This spectral sequence only computes $RO(C_2)$-graded homotopy groups and the $E_2$-page is an $\Ext$-group in the classical sense for a graded commutative Hopf algebra and so our $E_1$-page only has the structure described by Bruner in \cite{HRS} and May in \cite{MaySteenrod}.
For an $\varepsilon$-commutative Hopf algebra there might be a different structure.
Another choice of universal space would give a different structure, for example the operations would change the second grading in a different fashion.
It is our belief that any such considerations will lead to other families of differentials similar to those that we present here.
\end{rmk}

\section{An outline of the method}
\label{sec:outline}
We outline the rough set up of the method that allows for such formulas.
This is just an overview to help orient the reader.
After we sketch how multiplicative filtrations have multiplicative spectral sequences we will address our situation and the necessary ingredients for \Cref{thm:diff}.

We will make use of a particular class of filtered spectra.
These filtered spectra  are denoted $U(r,s,n)_{\bu}$ and are referred to as universal examples.
The associated spectral sequence has a particularly simple form.
It has only one nontrivial differential is a $d_r$ supported on a class in filtration $s$ of geometric dimension $n$ and is an isomorphism.
This looks like
\[
\xymatrix{
\ast \ar[r]&
S^{n-1} \ar[r] \ar[dl]&
S^{n-1}\ar[rr] \ar[dl]&
&
S^{n-1}\ar[r] \ar[dl]&
D^n\ar[r] \ar[dl]&
D^n \ar[dl]\\
\ast&
S^{n-1}&
\cdots&
\ast&
S^{n} \ar@/^1pc/[lll]^{d_r'}&
\ast
}
\]
where the bottom row is the associated graded complex.
We use $d'_r$ as a place holder for the actual differential which takes suspensions into account.
The leftmost $D^n$ occurs in filtration degree $s$ and the leftmost $S^{n-1}$ occurs in filtration degree $s-r$.
These were first used by Bousfield and Kan to show that the homotopy spectral sequence for a cosimplicial simplicial set has a natural pairing in \cite{BousfieldKanUniversal}.
There are also universal examples for permanent cycles which are of the form
\[
\xymatrix{
\cdots \ar[r]&
\ast\ar[r] &
\ast \ar[r] \ar[dl]&
S^n\ar[r] \ar[dl]&
S^n \ar[dl] \ar[r]&
\cdots\\
\cdots&
\ast&
S^{n} &
\ast&
\cdots
}
\]
and are denoted $U(\infty,s,n)$.
The first $S^n$ occurs in filtration $s$.

Consider a multiplicative filtration $\Yb$ with its associated graded complex of spectra $E^0(\Yb)$.
We can use the product
\[
\mu_{\bu}: \Gb(\Yb,\Yb) \lra \Yb
\]
to recover the Leibniz formula for differentials in the associated spectral sequence.
That is to say, we want to prove that
\[
d_r(x\cdot y)= d_r(x)\cdot y + (-1)^{deg(x)}x\cdot d_r(y).
\]
Let $y_1,y_2\in E_r(\Yb)$ be two classes in the homotopy spectral sequence.
As they survive to the $r$th page of the spectral sequence, they support $d_r$'s.
These classes can be represented by maps of filtered spectra
\[
\xymatrix{
U(r,s_i,n_i) \ar[d]_{\wt{y}}&
&
\cdots \ar[r]&
\ast \ar[r] \ar[d]&
S^{n_i-1} \ar[r]^1 \ar[d]&
\cdots \ar[rr]^1 \ar[d]&
&
S^{n_i-1}\ar[r] \ar[d]&
D^{n_i}\ar[r]^1 \ar[d]&
\cdots\\
\Yb&
&
\cdots \ar[r]&
Y_{s_i-r-1} \ar[r]&
Y_{s_i-r}\ar[r]&
\cdots\ar[rr]&
&
Y_{s_i-1}\ar[r]&
Y_{s_i}\ar[r]&
\cdots
}
\]

We then smash these two maps of filtered spectra 
\[
\wt{y}_1\wedge\wt{y}_2: \Gb(U(r,s_1,n_1)_{\bu},U(r,s_2,n_2)_{\bu})_{\bu} \lra \Gb(\Yb,\Yb)_{\bu}.
\]
The associated spectral sequence of the domain is simple to describe and contains very few differentials.
This is because the factors $U(r,s_i,n_i)_{\bu}$ have particularly simple associated spectral sequences as described above. 
This coupled with the isomorphism of associated graded complexes
\[
\pi_*(E^0(\Gb(U(r,s_1,n_1),U(r,s_2,n_2))_{\bu})\iso \pi_*(E^0(U(r,s_1,n_1)_{\bu}))\bigotimes \pi_*(E^0(U(r,s_2,n_2)_{\bu}))
\]
provide a simple description of the spectral sequence associated with $\Gb(U(r,s_1,n_1)_{\bu},U(r,s_2,n_2)_{\bu})$.
The top row is the filtration $\Gb(U(r,s_1,n_1)_{\bu},U(r,s_2,n_2)_{\bu})$, the associated graded complex appears on the lower row, and $n=n_1+n_2$.
\[
\xymatrix{
\ast\ar[rr]^{\ii}&
&
S^{n-2} \ar[rr] \ar[ld]&
&
S^{n-1} \ar[rr]\ar[ld]&
&
D^{n} \ar[r] \ar[ld]&
\cdots\\
&
S^{n-2}&
&
S^{n_1-1}\wedge S^{n_2}\vee S^{n_1}\wedge S^{n_2-1}\ar@/^1pc/[ll]_{d'_r}&
&
S^{n} \ar@/^1pc/[ll]_{d'_r}&
&
}
\]
To obtain the formula for differentials we look at the differentials in this spectral sequence and push them forward along the composition
\[
\mu_{\bu}\circ (\wt{y}_1\wedge \wt{y}_2): \Gb(U(r,s_1,n_1)_{\bu},U(r,s_2,n_2)_{\bu}) \lra \Gb(\Yb,\Yb) \lra \Yb.
\]

It is this basic approach that we will use to prove \Cref{thm:diff}.
The general procedure for a spectral sequence where the input happens to be a $T$-algebra for some monad $T$ on the category of spectra is as follows .
One first shows that they can lift a $T$-algebra structure on $Y$ to a filtered $T$-algebra or a $T_{\bu}$-algebra on $\Yb$ where the filtration $\Yb$ induces the spectral sequence in question.
Next one looks at the spectral sequence for the filtered spectrum $T_{\bu}(U(r,s,n)_{\bu})$ for the universal example $U(r,s,n)_{\bu}$.
Finally, one obtains formulas for differentials on 
\[
\wt{y}:U(r,s,n)_{\bu} \sr{\wt{y}}\lra \Yb
\]
by examining the composition
\[
U(r',s',n') _{\bu} \lra T_{\bu}(U(r,s,n)_{\bu}) \sr{T_{\bu}(\wt{y})}\lra T_{\bu}(\Yb) \sr{\mu_{T_{\bu}}}\lra \Yb
\]
where $T_{\bu}(\wt{y})$ is the monad $T_{\bu}$ applied to $\wt{y}$ and $\mu_{T_{\bu}}$ is the action map of the $T_{\bu}$-algebra $\Yb$.
This composition allows one to push differentials in the spectral sequence for $T_{\bu}(U(r,s,n))$ forward to the spectral sequence for $\Yb$.

There are four main ingredients to such a result:
\begin{enumerate}
	\item Algebraic operations on the $E_2$-page of the spectral sequence that are calculable.
	This ingredient is algebraic in nature, it follows from the fact that the $E_1$-page is an $\Hi$-algebra in chain complexes.
	We recall the proof of this in \Cref{sec:power ops}.
	Our main references for this material are Chapter IV Section 3 of \cite{HRS} and \cite{MaySteenrod} in general.
	The value of this ingredient is that it allows us to identify terms in the formula with power operations that are defined algebraically.
	This gives us a greater amount of control as such operations behave more regularly than geometric or homotopical operations.
	For example, such operations satisfy the Adem relations and the Cartan formula. 
	It also gives us a place to start our obstruction theoretic construction of an $\Hi$-structure on our filtered spectrum.
	\item The filtration of the spectral sequence should have an $\Hi$-structure.
	This amounts to a collection of maps 
	\[
	\xi_r^{n,k}: E\Sigma_{r+}^{(n)}\wedge_{\Sigma_r}\Gamma_k^r(\Yb)\lra Y_{n+k}.
	\]
	that interact in the expected fashion, see \Cref{defn:Hinfty filtration}.
	We establish such an $\Hi$-structure on the $RO(C_2)$-Adams tower in \Cref{sec:Hinfty}.
	\item We then want the structure on the $E_1$-page, as a complex, to be induced by the structure on the filtration.
	This will allow us to relate the power operations on the $E_2$-page coming from the first ingredient to the structure on the abutment of the spectral sequence alluded to in the second ingredient.
	It is usually a consequence of the formalism in the underlying category.
	This ingredient is spelled out explicitly below and in \Cref{defn:geometric condition}.
	We prove that this is indeed the case in our setting in \Cref{prop: geometric condition}.
	\item We also must analyze the attaching maps of cells in the extended powers of universal examples $U(r,s,(p,q))_{\bu}$. 
	In the case of permanent cycles $U(\infty,s,(p,q))_{\bu}$ we identify such filtered spectra with stunted projective spaces with their skeletal filtration.
	We prove that
	\[
	D^{tr(n)}_2(S^{p,q})=S^{(n,0)}_+ \wedge_{\Sigma_2} (S^{p,q} \wedge S^{p,q}) \iso \sus^{p,q} \RP^{n+p,m+q} / \RP^{p-1,q}
	\]
	in \Cref{sec:Thom}.
	The attaching maps of such stunted projective spaces is computed in \Cref{sec:rppqs} as \Cref{prop:attaching maps}.
	With this, we are able to determine the coefficients of the terms in the formulas for differentials, the $\alpha$'s of Theorem \ref{thm:diff}.
\end{enumerate}

Some of these ingredients have more formal proofs than others.
In \cite{TilsonThesis}, these ingredients were implied by conditions that were assumed to hold.
Those conditions are as follows.
\begin{defn}
\label{hypotheses}
\begin{enumerate}
\item We say an equivariant spectrum $\mc{H}$ satisfies the \underline{Algebraic Condition} if $(\mc{H}_{**},\mc{H}_{**}\mc{H})$ is a commutative Hopf algebroid over a field $k$ with $\mc{H}_{**}\mc{H}$ flat over $\mc{H}_{**}$.
\item We say that an equivariant spectrum $\mc{H}$ satisfies the \underline{Homotopical Condition} if we have a natural isomorphism
\[
[X,Y\wedge \mc{H}]_{**} \iso \Hom_{\mc{H}_{**}}(\mc{H}_{**}X,\mc{H}_{**}Y).
\]
The first term is homotopy classes of maps of equivariant spectra, and the second  is maps of $\mc{H}_*$-modules.
\item We say that an extended power construction satisfies the \underline{Geometric Condition} if the following holds for any cellular filtration $Y_{\bu}\subset Y$.
Let $Z_{i,s}$ denote $E\pi^{(i)}_+\wedge_{\pi}\Gamma^r_s$ where $\pi \subset \Sigma_r$ acts on $\Gamma^r_{\bu}$ by restriction of the natural $\Sigma_r$ action.
Let $B\pi$ denote the cell complex $E\pi/\pi$.
We then require the following:
\begin{enumerate}
\item $(Z_{i,s},Z_{i-1,s})$ and $(Z_{i,s},Z_{i,s-1})$ are relative cell pairs,
\item 
\[
\displaystyle \frac{Z_{i,s}}{Z_{i-1,s}}\we\frac{B\pi^{(i)}}{B\pi^{(i-1)}}\wedge \Gamma^r_s,
\]
\item 
\[
\displaystyle\frac{Z_{i,s}}{Z_{i-1,s}\cup Z_{i,s-1}}\we \frac{B\pi^{(i)}}{B\pi^{(i-1)}}\wedge \frac{\Gamma^r_s}{\Gamma^r_{s-1}},
\]
\item and the following diagram commutes.
\[
\xymatrix{
\displaystyle\frac{Z_{i,s}}{Z_{i-1,s}\cup Z_{i,s-1}}\ar[rr]^{\we} \ar[d]_{\partial}&
&
\displaystyle\frac{B\pi^{(i)}}{B\pi^{(i-1)}}\wedge \frac{\Gamma^r_s}{\Gamma^r_{s-1}}\ar[d]^{\partial\wedge1\vee 1\wedge\partial}\\
\displaystyle\frac{Z_{i-1,s}}{Z_{i-2,s}\cup Z_{i-1,s-1}}\vee\frac{Z_{i,s-1}}{Z_{i-1,s-1}\cup Z_{i,s-2}}\ar[rr]^{\we}&
&
\displaystyle\frac{B\pi^{(i-1)}}{B\pi^{(i-2)}}\wedge \frac{\Gamma^r_s}{\Gamma^r_{s-1}} \vee \frac{B\pi^{(i)}}{B\pi^{(i-1)}}\wedge \frac{\Gamma^r_{s-1}}{\Gamma^r_{s-2}}.
}
\]
\end{enumerate}
\end{enumerate}
\end{defn}

We verify that our situation satisfies each of these conditions and show how they imply the first three ingredients above.
In \Cref{sec:power ops}, we show how the Algebraic Condition is satisfied and allows us to construct power operations and thus we have our first ingredient.
Our Homotopical Condition is an equivariant version of Bruner's Condition 3.6 in Chapter IV of \cite{HRS}.
In \Cref{sec:Hinfty}, we show that the Homotopical Condition is follows from \Cref{prop: splitting}.
We then use it as well as the Geometric Condition to show in \Cref{thm: hinfty} that the Adams tower we consider is an $\Hi$-filtration.
We also show that in our situation the Geometric Condition is satisfied in \Cref{prop: geometric condition} of in \Cref{sec:Hinfty}.
The fourth ingredient mentioned above is a subtle computation which we carry out in Sections \ref{sec:Thom} and \ref{sec:rppqs}.

Let us now outline how the above four ingredients interact to provide us with \Cref{thm:diff}.
First, consider a permanent cycle $y \in E_r^{s,n}(\Yb)$ in the spectral sequence computing $\pi_*(Y)$ from a filtration $\Yb$ of $Y$.
Represent this class as the following map of filtered spectra. 
\[
\xymatrix{
U(\infty ,s,n) \ar[d]_{\wt{y}}&
&
\cdots \ar[r] \ar[d]&
\ast \ar[r] \ar[d]&
\ast \ar[r] \ar[d]&
S^n\ar[r] \ar[d]&
S^n \ar[d] \ar[r]&
\cdots \ar[d]\\
\Yb&
&
\cdots \ar[r]&
Y_{s+2} \ar[r]&
Y_{s+1} \ar[r]&
Y_s \ar[r]&
Y_{s-1} \ar[r]&
\cdots
}
\]
Now we take the filtered extended power to obtain from $\wt{y}$ the map
\[
\tGb^r(\wt{y}):\tGb^r(U(\infty , s,n))\lra \tGb^r(\Yb).
\]
The four ingredients are now used as follows.
The $\Hi$-structure on the filtration in the second ingredient gives us a natural map
\[
\xi_r: \tGb^r(\Yb) \lra \Yb.
\]
Composing this structure map with $\tGb^r(\wt{y})$ gives us the map of filtered spectra
\[
\tGb^r(U(\infty , s,n)) \sr{\tGb^r(\wt{y})}\lra \tGb^r(\Yb) \sr{\xi_r}\lra \Yb.
\]
Thus differentials in the spectral sequence for $\tGb^r(U(\infty , s,n))$ push forward to differentials in the spectral sequence for $\Yb$.
The third ingredient allows us to relate the associated graded of $\xi_r \circ\tGb^r(\wt{y})$ with algebraic operations that exist and are well behaved by ingredient one.
The domain of this map is identified with the cellular filtration of stunted projective spaces using \Cref{prop:thom spaces}.
We then have that
\[
\hocolim_{\bu} \tGb^r(U(\infty , s,n) \iso \sus^n \RP^{\infty}_n.
\]
Now we use the computation in the fourth ingredient to compute the differentials in the spectral sequence associated to $\tGb^r(U(\infty , s,n))\we \sus^n \RP^{\bu+n}_n$.
The differentials are given by the attaching maps of $\sus^n \RP^{\infty}_n$.
These differentials push forward to the spectral sequence for $\Yb$ as desired.
This will be examined more thoroughly in \Cref{sec:equi diff}.

\section{Power operations in $\Ext$}
\label{sec:power ops}
In \cite{MaySteenrod}, May established an action of the ``big'' Steenrod algebra on the cohomology of commutative Hopf algebras, in particular it acts on the cohomology of the ``classical'' dual Steenrod algebra $\cA^{cl}_*$
\[
H^*(\cA^{cl}_*;\F_2):=Ext^{**}_{\cA^{cl}_*}(\F_2,\F_2).
\]
This ``big'' Steenrod algebra has the property that $sq^0$ does not necessarily act as the identity.
In \cite{HRS}, Bruner extended this result to give an action of the ``big'' Steenrod algebra on the cohomology of any commutative Hopf algebroid.
We can apply this result of Bruner's directly without modification.
As Bruner says, all of the necessary information is in fact contained in \cite{MaySteenrod}.

We use $k$ denote our base ring, in our situation it will be $\F_2$ (although his results certainly apply to the case when $p$ is odd).
Our Hopf algebroid is denoted $(R,A)$ and all structure maps are $k$-linear.
Note that we also require that $A$ be flat over $R$.

\begin{thm}[Bruner]
Let $(R,A)$ be a commutative Hopf algebroid over the base field $\F_2$.
There are natural homomorphisms
\[
sq^i:\Ext_{A}^{s,t}(N,M) \lra \Ext_{A}^{s+i,2t}(N,M)
\]
satisfying the Cartan formula, the Adem relations, and instability when $M$ is a commutative unital $A$-algebra in $(R,A)$-comodules and $N$ is a cocommutative unital $A$-coalgebra in $(R,A)$-comodules.
\end{thm}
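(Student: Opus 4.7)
The plan is to adapt May's construction in \cite{MaySteenrod} to the Hopf algebroid setting, which is the route Bruner takes. The strategy is to build a chain-level model for $\Ext_A(N,M)$ equipped with a pairing that is $\Sigma_2$-equivariant up to coherent homotopy, then extract the operations using a $\Sigma_2$-free resolution of $\F_2$.

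First, I would fix the (normalized) two-sided cobar complex $C^\bullet(N,A,M)$ whose cohomology computes $\Ext_A^{*,*}(N,M)$; flatness of $A$ over $R$ is precisely what makes this complex compute $\Ext$ and what makes the external shuffle pairing
\[
\nu\colon C^\bullet(N,A,M) \tensor_{\F_2} C^\bullet(N',A,M') \lra C^\bullet(N \tensor N', A, M \tensor M')
\]
a well-defined chain map realizing the external K\"unneth product on $\Ext$.

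Next, the cocommutative coalgebra structure $\Delta\colon N\to N\tensor N$ and commutative algebra structure $\mu\colon M\tensor M\to M$ allow one to internalize $\nu$ to a chain map $\phi\colon C^\bullet\tensor C^\bullet\to C^\bullet$. This $\phi$ is $\Sigma_2$-equivariant only up to coherent homotopy because the shuffle map $\nu$ is itself only homotopy commutative. Taking $W=\F_2[\Sigma_2]\{e_0, e_1, e_2, \dots\}$ to be a $\Sigma_2$-free acyclic resolution of $\F_2$, the method of acyclic models produces a $\Sigma_2$-equivariant chain map
\[
\xi\colon W \tensor_{\F_2} C^\bullet \tensor C^\bullet \lra C^\bullet
\]
whose restriction to $e_0$ is $\phi$, constructed by induction on the $W$-filtration degree with no obstruction at any stage. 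Defining $sq^i(x)$ as the cohomology class of $\xi(e_j \tensor x \tensor x)$ for the index $j$ making the bidegree land in $(s+i, 2t)$ produces the operations; well-definedness on $\Ext$-classes and independence of the choice of $W$ are standard consequences of acyclicity.

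The main obstacle is constructing $\xi$ correctly in the Hopf algebroid setting rather than the Hopf algebra setting: the two-sided cobar differential involves both the left and right units of the Hopf algebroid, so the shuffle pairing $\nu$ and its $\Sigma_2$-homotopies must be verified to respect these unit structures carefully. Once $\xi$ is in hand, the remaining properties are formal and follow the pattern of \cite{MaySteenrod}. The Cartan formula comes from applying $\xi$ to external products of classes, the Adem relations from comparing two iterated applications of $\xi$ parameterized by a $(\Sigma_2\wr \Sigma_2)$-equivariant resolution of $\F_2$, and instability from the vanishing of $e_j$ outside its range together with the identification $\xi(e_0\tensor x\tensor x)=x^2$. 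None of these arguments use any property of $(R,A)$ beyond the flatness hypothesis and the algebra/coalgebra structures on $M$ and $N$, so they carry over from the Hopf algebra case essentially verbatim.
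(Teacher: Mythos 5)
Your proposal is correct and follows essentially the same route as the paper: both are May's method of producing a $\Sigma_2$-equivariant chain map $W\tensor C^{\bullet}\tensor C^{\bullet}\to C^{\bullet}$ on a cobar-type model for $\Ext_A(N,M)$ by relative homological algebra (the paper packages the existence-and-uniqueness step as Bruner's comparison lemma for $\pi$-equivariant maps out of $\cV_*\tensor K_*^r$ into a complex of extended comodules, which is exactly your ``no obstruction at any stage'' induction), then defining $sq^i(x)$ by evaluating on $e_j\tensor x\tensor x$ and importing the Cartan, Adem, and instability properties from May. The only cosmetic difference is that you work with the two-sided cobar complex directly, while the paper first builds the equivariant map on the one-sided resolution $C(A,M)_*$ and then applies $\Hom_A(N,-)$ together with the cocommutative diagonal of $N$.
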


The specific properties can be found in \cite{HRS}, but as we won't use the properties we won't list them here.
A familiarity with the classical Steenrod algebra will give the proper intuition.
In fact, the action of the ``classical'' Steenrod algebra can be recovered from this result if we take $N:=H_*(X)$, $M=\cA^{cl}_*$, and $(R,A)=(\F_p,\cA^{cl}_*)$.

\begin{proof}
This result follows from Lemma 2.3 in Section 2 Chapter 4 of \cite{HRS}.
The following lemma constructs the necessary structure maps so that the operations can be defined.

\begin{lemma}[Bruner]
\label{lemma: Bobs lemma}
Let $\pi$ be a subgroup of $\Sigma_r$. Let $\cV_*$ be any $k[\pi]$ free resolution of $k$ such that $\cV_0=k[\pi]$ with generator $e_0$. Let $M$ and $N$ be $A$-comodules. Let 

\[
0 \rightarrow M \leftrightarrows K_0 \leftrightarrows K_1 \leftrightarrows \cdots
\]
be an $R$-split exact sequence of $A$-comodules, 
\[
0 \rightarrow N \lra L_0 \lra L_1 \cdots
\]
a complex of extended $A$-comodules.
Let $f:M^r \lra N$ be a $\pi$-equivariant map of $A$-comodules with $\pi$ acting in the obvious fashion.
Let $\pi$ act on $K^r_*$ by permuting the factors, trivially on $L_*$, and diagonally on $\cV_* \tensor K^r_*$.
Give $\cV_* \tensor K^r_*$ the $A$-comodule structure induced by that on $K^r_*$. Then there exists a unique $\pi$-equivariant chain homotopy class of $\pi$-equivariant $A$-comodule chain maps $\Phi: \cV_* \tensor_k K^r_* \lra L_*$.
\end{lemma}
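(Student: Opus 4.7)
My approach would be the standard comparison theorem of homological algebra, adapted to the equivariant $A$-comodule setting. Two features make the lift possible. On the source side, $\cV_* \tensor_k K^r_*$, augmented by $k \tensor M^r \iso M^r$, is an $R$-split exact resolution: tensoring the $R$-split augmented complex $K_* \ra M$ with itself $r$ times preserves $R$-splitness, and then tensoring with the $k$-free complex $\cV_*$ preserves it by a K\"unneth argument. On the target side, the extended $A$-comodule structure $L_n \iso A \tensor_R \overline{L}_n$ yields the adjunction $\Hom_A(X, L_n) \iso \Hom_R(X, \overline{L}_n)$, which converts the comodule lifting problem into an $R$-module lifting problem against the $R$-split acyclic complex $K^r_*$.

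With these ingredients in hand, I would construct $\Phi$ inductively on total bidegree. In bidegree $(0,0)$ a $\pi$-equivariant map $\cV_0 \tensor K_0^r \ra L_0$ is determined by its restriction to $e_0 \tensor K_0^r$, since $\cV_0 = k[\pi]$ with generator $e_0$; I would set this restriction to be the composite $K_0^r \ra M^r \sr{f}\ra N \ra L_0$ coming from the augmentations and $f$, then extend $\pi$-equivariantly by $k[\pi]$-linearity. Inductively, suppose $\Phi_{i,j}$ is defined for all $i+j < n$ with the expected compatibility with differentials and augmentations. For each $(i,j)$ with $i+j = n$, pick a $k[\pi]$-basis $\{v_\alpha\}$ of $\cV_i$; for each basis element $v_\alpha$, the already-defined boundary data specifies an $R$-module map into $\ker(d)\subset L_{n-1}$, which lifts to $L_n$ through the extended-comodule adjunction and the $R$-split acyclicity of $K^r_*$. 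Then extend the resulting collection of $R$-module maps to all of $\cV_i \tensor K_j^r$ by $k[\pi]$-linearity. Because the $A$-coaction on $\cV_* \tensor K^r_*$ is by construction $\pi$-equivariant and is defined purely from the coaction on $K^r_*$, $k[\pi]$-linearity automatically preserves both the comodule property and $\pi$-equivariance.

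Uniqueness up to $\pi$-equivariant chain homotopy follows by applying the very same obstruction-theoretic induction to the difference $\Phi - \Phi'$ of two lifts, which covers the zero augmentation and so admits a $\pi$-equivariant comodule null-homotopy by the identical argument (using one extra degree of freedom in each bidegree). The main obstacle I expect is coordinating the three structures at once: the $A$-comodule structure, the $\pi$-equivariance, and compatibility with augmentations and differentials. In particular $|\pi|$ need not be invertible in $k$ (for $\pi = \Sigma_2$, $k = \F_2$ this is precisely the case of interest), so averaging cannot be used to impose equivariance after the fact. The substitute is the systematic use of $k[\pi]$-freeness of each $\cV_i$: specify every lift only on a chosen $k[\pi]$-basis and then propagate by $k[\pi]$-linearity at every inductive stage. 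The remaining bookkeeping, namely organizing the double complex $\cV_* \tensor K^r_*$ against the single complex $L_*$, is standard once the induction is run on total degree $i+j$.
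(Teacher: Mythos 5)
The paper does not prove this lemma at all: it is stated inside the proof of the preceding theorem and is attributed to Bruner (HRS, Chapter IV, Lemma 2.3) without argument. So there is no "paper proof" to compare against, only Bruner's original one, which proceeds by the relative-injective comparison theorem, exactly the framework you are invoking. Your overall recipe has the right three ingredients: (a) the adjunction $\Hom_A(X, A\tensor_R \overline{L}_n) \iso \Hom_R(X, \overline{L}_n)$ makes each $L_n$ relatively injective, (b) $\cV_*\tensor_k K^r_*$ is $R$-split exact over $M^r$, and (c) $k[\pi]$-freeness of each $\cV_i$ lets you impose $\pi$-equivariance by defining on a $k[\pi]$-basis (or on $e_0\tensor K^r_*$ when $\cV_0 = k[\pi]$) and extending, with no averaging needed since $|\pi|$ is not a unit.

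However, your description of the inductive step runs in the wrong direction, and as written it would not go through. You say that the boundary data gives a map landing in $\ker(d)\subset L_{n-1}$ which you then lift up to $L_n$. That is the projective comparison theorem, and to lift a map into $\ker(d_{n-1})$ through $d_n\colon L_n \to L_{n-1}$ you would need exactness of $L_*$ at $L_{n-1}$ — but the lemma only assumes $L_*$ is a complex of extended comodules, not an exact one. The correct argument is the injective comparison theorem applied on the source side: given $\Phi_{n-1}$, the map $d^L\circ\Phi_{n-1}$ factors through the image of $d\colon (\cV\tensor K^r)_{n-1}\to(\cV\tensor K^r)_n$ because it vanishes on $\ker(d)$ (by $d^L d^L=0$ together with the inductive hypothesis), and $R$-split exactness of $\cV\tensor K^r$ makes that image an $R$-split subobject of $(\cV\tensor K^r)_n$. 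One then extends that map over all of $(\cV\tensor K^r)_n$ using the relative injectivity of $L_n$ via the adjunction. Exactness of $K^r_*$ enters in identifying the constrained subobject of the source and seeing that the inclusion is $R$-split; no exactness of $L_*$ is ever used. With this correction your argument matches the standard one and the uniqueness-up-to-homotopy step goes through the same way, applied to $\Phi-\Phi'$.
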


As we are working with graded $A$-comodules, the bidegree of the map $\Phi$ is important.
The bidegree of $\cV_i\tensor (K^r)_{j,t}$ is $(j-i,t)$.
This lemma, as well as the grading convention, is that of Bruner from \cite{HRS}.
It agrees with the standard depiction of operations in $\Ext$.
In practice, $L=K$ will arise as $\pi_*(E^0(\Yb))$ and so to maintain agreement the bidegree for the purpose of this article is $(i-j,t)$, but feel free to ignore this point.
We follow \cite{HRS} and use $C(A,M)_*$ to denote the reduced cobar resolution
\[
C(A,M)_s:=M\tensor_R \oo{A}^{\tensor_Rs}\tensor_R A_r 
\]
where $\oo{A}$ denotes the cokernel of the the right unit and $A_r$ denotes $A$ with $R$ acting on the left via the right unit.
This cobar resolution is used to compute $\Ext_A(N,M)$ as
\[
\Ext_A^*(N,M)=H^*(\Hom_A(N,C(A,M)_*)).
\]
We then get power operations by using \Cref{lemma: Bobs lemma} in the case where both $K$ and $L$ are $C(A,M)_*$ as follows.
We obtain
\[
\Phi_*^p:\cV\tensor_kC(A,M)^{\tensor p} \lra C(A,M)
\]
by applying the lemma to the map
\[
\widetilde{\mu}_*^p:C(A,M)_*^{\tensor p} \lra C(A,M)_*
\]
where $\wt{\mu}$ is the lift of the commutative product $\mu$ on $M$ and $\pi$ is taken to be the $p$-Sylow subgroup of $\Sigma_p$ (the cyclic group of order $p$).

We obtain operations in $\Ext$ using the composition $\theta$.
\[
\xymatrix{
\cV\tensor_k\Hom_A(N,C(A,M)_*)^{\tensor p} \ar[rr]^{\theta}\ar[dr]_{\tensor}&
&
\Hom_A(N,C(A,M)_*)\\
&
\Hom_A(N^{\tensor p},\cV\tensor_kC(A,M)_*^{\tensor p})\ar[ur]_{(\Delta^p)^*\circ(\Phi_*^p)_*} &
}
\]
where $\Delta$ is the coalgebra structure of $N$.
All of the coherences that are usually required follow from the uniqueness of the maps $\Phi^r_*$ up to homotopy. 
These coherences persist through the application of $\Hom(\Delta^r,-)$ as $\Delta$ is cocommutative.
We define the operations for $x\in \Ext_A^{s,t}(N,M)$ as
\[
sq^i(x):=\theta_*(e_{i-t+s}\tensor x\tensor x)\in \Ext_A^{s-i+t,2t}(N,M).
\]
The properties that follow can be found in \cite{MaySteenrod}.
\end{proof}

In the equivariant situation, we can apply this result directly thanks to the computations of Hu and Kriz in \cite{HuKriz}, see \Cref{subsec:recollections equivariant coh}.

\section{$\Hi$-structures on Adams filtrations}
\label{sec:Hinfty}
In this section we establish that the Adams filtration is indeed an $\Hi$-filtration.
As has been mentioned earlier, we will only consider naive $\Ei$-structures in this paper, and so $\Hi$-structure means with respect to the naive $C_2$-equivariant $\Ei$-operad.
Thus $E\pi$ will always mean a trival $C_2$-space.
Our argument follows that of Bruner and constructs the relevant maps by showing that the obstructions to existence are all in the image of the zero map.
To show that the vanishing of this obstruction is sufficient we use a result of Christensen, Dwyer, and Isaksen on obstruction theory in model categories, see \cite{ChristensenDwyerIsaksenObstModelCats}.
After stating the main results, we will recall the definition of the Adams tower as well as the canonical Adams tower.
We will also discuss the Geometric Condition which is a result of the nice formal properties of equivariant orthogonal spectra.

The main result in this section is the following theorem.
\begin{thm}
\label{thm: hinfty}
An $\eHF_2$-Adams tower $\Yb$ of an $\Hi$-ring spectrum $Y$ has an $\Hi$-structure when the ground category satisfies the Geometric Condition on extended powers and $\eHF_{2**}(Y_s)$ is projective for each $s$.
\end{thm}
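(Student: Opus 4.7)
The plan is to build the structure maps $\xi_r^{n,m}: Z_{n,m} \to Y_{n+m}$ by double induction on $(n,m)$, using the obstruction theory for stable model categories of Christensen--Dwyer--Isaksen. One first produces the underlying multiplicative structure on the Adams tower (the case $n=0$) by lifting the $r$-fold product $Y^{\wedge r} \to Y$, which is restricted from the $\Hi$-structure on $Y$, along the tower using a simpler version of the same obstruction argument. The inductive step then consists of extending the map already defined on $Z_{n-1,m}\cup Z_{n,m-1}$ over the cofibration $Z_{n-1,m}\cup Z_{n,m-1}\hookrightarrow Z_{n,m}$, with target the inclusion $Y_{n+m-1}\hookrightarrow Y_{n+m}$.

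By the Geometric Condition in \Cref{hypotheses}, the relative cell quotient is
\[
\frac{B\Sigma_r^{(n)}}{B\Sigma_r^{(n-1)}}\wedge \frac{\Gamma^r_m}{\Gamma^r_{m-1}},
\]
so the obstruction lies in a homotopy class of maps from a suspension of this smash into $Y_{n+m}$. Because $\eHF_{2**}(Y_s)$ is projective for every $s$, so is the homology of each cofiber $Y_s/Y_{s-1}$, and hence, via the K\"unneth formula that follows from \Cref{lemma:equi splitting}, so is the homology of each wedge summand of $\Gamma^r_m/\Gamma^r_{m-1}$. Invoking \Cref{lemma:equi splitting} once more, the source smashed with $\eHF_2$ splits as a wedge of suspensions of $\eHF_2$, which supplies the Homotopical Condition in exactly the form we need: the obstruction group can be computed as a $\Hom$ of $\cA^{C_2}_*$-comodules.

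Under this identification the geometric obstruction matches the obstruction to extending the corresponding map of $E_1$-complexes over the algebraic bar filtration for the extended power; but such an algebraic extension exists by \Cref{lemma: Bobs lemma} of \Cref{sec:power ops}, the map $\Phi$ produced there being precisely the data of such an extension. Thus the algebraic obstruction vanishes, hence so does the geometric one, and the induction proceeds. The main obstacle in executing this outline is the algebraic-to-geometric comparison of obstructions: one must carefully match the filtered extended power $\tGb^r(\Yb)$ with a spectrum-level resolution whose associated $E_1$-complex is the one appearing in \Cref{lemma: Bobs lemma}, and must verify that the partial structure maps already in place realize the required algebraic chain map on associated graded pieces. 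Once this is in hand, the coherence diagrams of \Cref{defn:Hinfty filtration} hold because the extensions constructed at each stage are unique up to filtered homotopy, by the one-dimension-lower version of the same obstruction computation.
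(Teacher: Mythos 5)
Your skeleton of the argument is the right one: double induction, extend over the cofibration $Z_{n-1,m}\cup Z_{n,m-1}\hookrightarrow Z_{n,m}$, use the Geometric Condition to identify the relative quotient, use the Homotopical Condition (via projectivity of $\eHF_{2**}(Z^1_{k,s})$, which you derive correctly) to turn the Christensen--Dwyer--Isaksen obstruction into an $\M_2$-module $\Hom$. Up to that point you agree with the paper.

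The vanishing step is where the two arguments diverge, and your version has a genuine gap which you yourself flag. You want to import \Cref{lemma: Bobs lemma} to say that because an algebraic chain map $\Phi$ exists (uniquely up to homotopy) on the $E_1$-page, the corresponding geometric obstruction class must vanish. But \Cref{lemma: Bobs lemma} only produces a chain map between complexes of comodules; it says nothing a priori about the spectrum-level extension problem. To use it the way you propose, you would have to exhibit an isomorphism between the Christensen--Dwyer--Isaksen obstruction group and the obstruction group of the corresponding algebraic lifting problem \emph{and} check that your partially-constructed $\xi$'s realize $\Phi$ on associated graded. That is exactly the "algebraic-to-geometric comparison" you acknowledge as the main obstacle; as written it is not a proof but a deferred claim. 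The paper sidesteps this entirely: after identifying the obstruction group as $\Hom_{\M_2}(\eHF_{2**}(Z^1_{k,s}),\pi_{**}(Y^1_{k+s}))$, it invokes the naturality of the obstruction in the lifting data to show that $\mathcal{O}(\xi_{k,s})$ lies in the image of the map on obstruction groups induced by $1\wedge j_s: Z^1_{k,s}\to Z^1_{k,s+1}$. Since $j_s:\Gamma^r_s\to\Gamma^r_{s+1}$ is a map in an Adams tower it induces $0$ on $\eHF_2$-homology, hence $1\wedge j_s$ does too, hence the obstruction is in the image of the zero map and vanishes. No comparison with the algebraic obstruction theory is ever needed; the relationship to \Cref{lemma: Bobs lemma} is only used later, to identify which $\Ext$-classes the resulting $\xi_r$'s induce, not to build them.

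A secondary remark: your closing claim that coherence follows automatically from "uniqueness up to filtered homotopy by the one-dimension-lower obstruction computation" is not an argument the paper makes and is not obviously true; the paper instead builds the compatibility into the inductive construction (each $\xi_{k,s}$ is required to make the relevant square commute at the time it is constructed), which is the cleaner way to ensure the diagrams in \Cref{defn:Hinfty filtration} hold.
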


In order for this result to make any sense we have to be explicit about Adams towers and the Geometric Condition for extended powers.
These definitions are adapted from \cite{HRS}.
Note that we index with negative integers so that our filtrations are increasing.

\begin{defn}
An \underline{$\eHF_2$-Adams tower} for an equivariant cellular spectrum $Y$ is a filtration
\[
\cdots \lra Y_{-2}\sr{i_{-1}}\lra Y_{-1}\sr{i_0}\lra Y_0=Y
\]
such that for each nonpositive integer $s$
\begin{itemize}
\item $Y_s/Y_{s-1}$ is a retract of $X_s \wedge \eHF_2$ for some cellular spectrum $X_s$, and
\item $\eHF_{2**}(Y_s) \to \eHF_{2**} (Y_s/Y_{s-1})$ is a $\eHF_{2**}$-split monomorphism.
\end{itemize}
\end{defn}
We always have at least one such Adams tower.
\begin{defn}
The \underline{canonical $\eHF_2$-Adams tower} is given by taking $Y_0:=Y$ and $Y_s:= Y_{s+1} \wedge \oo{\eHF}_2$.
The maps in the tower are given by smashing with $i:\oo{\eHF}_2 \to S$
\[
i_s=1\wedge i: Y_{s-1}=Y_s \wedge \oo{\eHF}_2 \to Y_s \wedge S =Y_s
\]
where $\oo{\eHF}_2$ is the fiber of the unit $S \to \eHF_2$. 
\end{defn}

That the above filtration gives an Adams tower is a classical exercise.
The first condition is satisfied as we are working in a stable model category and smash products commute with the formation of cofibers.
In fact, we can identify $Y_s/Y_{s-1}$ as $Y \wedge \oo{\eHF}^{\wedge -s}_2\wedge \eHF_2$.
The $RO(C_2)$-graded KSS collapses so that $\pi_{**}(Y\wedge \eHF_2^{s})\iso H_{**}(Y;\eF_2)\tensor_{\M_2} (\cA^{C_2}_*)^{\tensor s-1}$ for all $s\geq 1$ since $\cA^{C_2}_*$ is free over $\M_2$.
Note that $\pi_{**}\oo{\eHF_2}\wedge \eHF_2$ is free as an $\M_2$-module as well since the unit splits off, as an $\M_2$ module.
Thus the $RO(C_2)$-graded KSS also collapses for $Y_s/Y_{s-1}$ which allows us to identify the layers of the Adams tower with the terms in the classical cobar complex.
This canonical Adams tower gives rise to the Adams spectral sequence of Hu and Kriz by virtue of the flatness of the $\cA^{C_2}_*$ over the $\M_2$, see \Cref{thm:flatness} above.

The Geometric Condition that we require is as follows.
It is Lemma 5.1 in Chapter 4 of \cite{HRS}.
\begin{defn}
\label{defn:geometric condition}
We say that an extended power construction satisfies the \underline{Geometric Condition} if the following holds for any cellular filtration $Y_{\bu}\subset Y$.
Let $Z_{i,s}$ denote $E\pi^{(i)}_+\wedge_{\pi}\Gamma^r_s$ where $\pi \subset \Sigma_r$ acts on $\Gamma^r_{\bu}$ by restriction of the natural $\Sigma_r$ action.
Let $B\pi$ denote the cell complex $E\pi/\pi$.
We then require the following:
\begin{enumerate}
\item $(Z_{i,s},Z_{i-1,s})$ and $(Z_{i,s},Z_{i,s-1})$ are relative cell pairs,
\item 
\[
\displaystyle \frac{Z_{i,s}}{Z_{i-1,s}}\we\frac{B\pi^{(i)}}{B\pi^{(i-1)}}\wedge \Gamma^r_s,
\]
\item 
\[
\displaystyle\frac{Z_{i,s}}{Z_{i-1,s}\cup Z_{i,s-1}}\we \frac{B\pi^{(i)}}{B\pi^{(i-1)}}\wedge \frac{\Gamma^r_s}{\Gamma^r_{s-1}},
\]
\item and the following diagram commutes.
\[
\xymatrix{
\displaystyle\frac{Z_{i,s}}{Z_{i-1,s}\cup Z_{i,s-1}}\ar[rr]^{\we} \ar[d]_{\partial}&
&
\displaystyle\frac{B\pi^{(i)}}{B\pi^{(i-1)}}\wedge \frac{\Gamma^r_s}{\Gamma^r_{s-1}}\ar[d]^{\partial\wedge1\vee 1\wedge\partial}\\
\displaystyle\frac{Z_{i-1,s}}{Z_{i-2,s}\cup Z_{i-1,s-1}}\vee\frac{Z_{i,s-1}}{Z_{i-1,s-1}\cup Z_{i,s-2}}\ar[rr]^{\we}&
&
\displaystyle\frac{B\pi^{(i-1)}}{B\pi^{(i-2)}}\wedge \frac{\Gamma^r_s}{\Gamma^r_{s-1}} \vee \frac{B\pi^{(i)}}{B\pi^{(i-1)}}\wedge \frac{\Gamma^r_{s-1}}{\Gamma^r_{s-2}}.
}
\]
\end{enumerate}
\end{defn}

This condition, while stated with regards to filtrations, is actually concerned with how the smash product interacts with cofiber sequences and so we require no real hypotheses on the filtration other than cellularity.
The category of Lewis-May-Steinberger spectra constructed in \cite{LMS}  was the first model that had these properties.
However, techniques from model categories are able to show that modern monoidal categories of spectra satisfy the above conditions.

\begin{prop}
\label{prop: geometric condition}
The extended power construction 
\[
D_{\pi}^{tr}(X):=E\pi_+\wedge_{\pi} X^r
\]
for $\pi\subset \Sigma_r$ and $E\pi$ having the trivial $C_2$-action satisfies the above definition of the Geometric Condition.
\end{prop}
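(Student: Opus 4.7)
The plan is to reduce all four parts of the Geometric Condition to formal properties of the smash product in $C_2$-equivariant orthogonal spectra by exploiting an explicit $\pi$-CW structure on $E\pi$. Since $E\pi$ carries the trivial $C_2$-action, I choose a $\pi$-CW decomposition whose cells are of the form $\pi_+ \wedge D^n$ with trivial $C_2$-action; this gives $E\pi^{(i)}/E\pi^{(i-1)} \we \bigvee \pi_+ \wedge S^i$ and $B\pi^{(i)}/B\pi^{(i-1)} \we \bigvee S^i$ on the orbit. The calculation underlying everything is the identification $(\pi_+ \wedge S^i) \wedge_\pi Y \iso S^i \wedge Y$ for any $\pi$-spectrum $Y$, which follows by selecting the canonical orbit representative with identity in the $\pi$-coordinate. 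Since $X \mapsto X \wedge_\pi \Gamma^r_s$ is a composition of left adjoints (smash with $\Gamma^r_s$ and orbits by the diagonal $\pi$-action) and therefore preserves colimits and cofiber sequences, applying it to $E\pi_+^{(i-1)} \hra E\pi_+^{(i)} \ra \bigvee \pi_+ \wedge S^i$ immediately yields condition (ii): $Z_{i-1,s} \ra Z_{i,s} \ra B\pi^{(i)}/B\pi^{(i-1)} \wedge \Gamma^r_s$.

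For (i), I would use that the category of orthogonal $C_2$-spectra is cellular and cofibrantly generated with a monoidal model structure in which the smash product preserves cofibrations. Hence $\Gamma^r_s$ is obtained from $\Gamma^r_{s-1}$ by attaching $\Sigma_r$-equivariant cells, and smashing with the cellular $E\pi_+^{(i)}$ and then taking $\pi$-orbits (which is safe because every cell of $E\pi$ is $\pi$-free) produces relative cell structures on both $(Z_{i,s}, Z_{i-1,s})$ and $(Z_{i,s}, Z_{i,s-1})$.

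Condition (iii) then follows by iterating (ii): first quotient by $Z_{i-1,s}$ to arrive at $B\pi^{(i)}/B\pi^{(i-1)} \wedge \Gamma^r_s$, and then quotient by the image of $Z_{i,s-1}/Z_{i-1,s-1}$, which under the equivalence of (ii) is exactly $B\pi^{(i)}/B\pi^{(i-1)} \wedge \Gamma^r_{s-1}$; exactness of smashing with the cofibrant spectrum $B\pi^{(i)}/B\pi^{(i-1)}$ converts this iterated quotient into $B\pi^{(i)}/B\pi^{(i-1)} \wedge \Gamma^r_s/\Gamma^r_{s-1}$. Condition (iv) is then a naturality statement: the boundary $\partial$ of the double cofiber $Z_{i,s}/(Z_{i-1,s} \cup Z_{i,s-1})$ decomposes, via the functoriality of the double pushout presentation, into two summands coming from the $E\pi$-filtration and the $\Gamma^r$-filtration, and under (iii) these correspond respectively to $\partial \wedge 1$ and $1 \wedge \partial$.

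The hardest part will be the bookkeeping in (iv), where the two cofiber sequences in the $i$- and $s$-directions have to be realized simultaneously so that the ambient boundary map splits cleanly into the wedge of the two expected maps. This is a formal consequence of working in a stable symmetric monoidal model category, with the trivial $C_2$-action on $E\pi$ ensuring that no equivariant twisting appears at the level of cells; the argument parallels Bruner's in Chapter IV of \cite{HRS}, the only genuinely new input being that a $\pi$-free, $C_2$-trivial cellular structure on $E\pi$ produces a cellular bifiltration $Z_{\bu,\bu}$ that is compatible with both filtration directions.
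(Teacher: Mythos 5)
Your proposal is correct and follows essentially the same formal route as the paper: both arguments reduce the Geometric Condition to the facts that smash products and $\pi$-orbits (safe here because the cells of $E\pi$ are $\pi$-free and $C_2$-trivial) preserve cofibrations and cofiber sequences in the monoidal model category of $C_2$-equivariant orthogonal spectra. The paper's proof is more compressed, producing a square of cofibrations from $E\pi^{(i-1)}\hra E\pi^{(i)}$ and $\Gamma^r_{s-1}\hra\Gamma^r_s$, passing to $\pi$-orbits, and invoking the braid diagram for iterated cofibers, whereas you spell out the cell-level identifications and the bookkeeping for (iii) and (iv); the underlying content is the same.
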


\begin{proof}
First we see that the first and second conditions follow from the fact that smash with an object takes cofiber sequences to cofiber sequences as does the orbit construction.
We begin by noting that since for each $s'$
\[
Y_{s'-1} \hra Y_{s'}
\]
is a cofibration by construction then
\[
\Gamma^r_{s-1} \hra \Gamma^r_{s}
\]
is as well.
Smashing this with
\[
E\pi^{(i-1)} \hra E\pi^{(i)}
\]
produces the commutative square
\[
\xymatrix{
E\pi^{(i-1)}\wedge \Gamma^r_{s-1}\ar@{^{(}->}[rr] \ar@{^{(}->}[dd]&
&
E\pi^{(i-1)}\wedge \Gamma^r_{s} \ar@{^{(}->}[dd]\\
&
&
\\
E\pi^{(i)}\wedge \Gamma^r_{s-1} \ar@{^{(}->}[rr]&
&
E\pi^{(i)}\wedge \Gamma^r_{s}
}
\]
of cofibrations in $C_2$-equivariant spectra.
Taking orbits with respect to the diagonal action of $\pi$ also provides us with a square 
\[
\xymatrix{
Z_{i-1,s-1}= E\pi^{(i-1)}\wedge_{\pi} \Gamma^r_{s-1}\ar@{^{(}->}[rr] \ar@{^{(}->}[dd]&
&
Z_{i-1,s}= E\pi^{(i-1)}\wedge_{\pi} \Gamma^r_{s} \ar@{^{(}->}[dd]\\
&
&
\\
z_{i,s-1}= E\pi^{(i)}\wedge_{\pi} \Gamma^r_{s-1} \ar@{^{(}->}[rr]&
&
Z_{i,s}= E\pi^{(i)}\wedge_{\pi} \Gamma^r_{s}
}
\]
of cofibrations.
From this square and the braid diagram computing the cofiber of a composition we can deduce the \underline{Geometric condition}.
\end{proof}

This can be compared with Appendix B Section 3.7 of \cite{HHRKervaire}, specifically Proposition 58.
All that is used in the above are basic facts that hold in any monoidal model category.

We also require the following result regarding maps into spectra of the form $Y\wedge \eHF_2$.
It is described as the Homotopical Condition in \Cref{hypotheses}.

\begin{prop}
\label{prop: splitting}
The $C_2$-equivariant spectrum $\eHF_2$ satisfies the Homotopical Condition in the sense that we have a natural isomorphism
\[
[X,\eHF_2\wedge Y] \iso \Hom_{\M_2}(\eHF_{2**}X,\eHF_{2**}Y)
\]
when $X$ has projective $\eHF_2$-homology and $Y$ is equivalent to a finite type $Rep(C_2)$-cell complex smashed with $\eHF_2^{\wedge s}\wedge\oo{\eHF}_2^{\wedge t}$ for $s,t\in \Z_{\geq0}$.
The first term is homotopy classes of maps of equivariant spectra, and the second  is maps of $\eHF_{2**}$-modules.
\end{prop}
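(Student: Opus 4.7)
The plan is to construct an explicit natural map and verify it is an isomorphism by reducing to the case of representation spheres via the splitting of Lemma \ref{lemma:equi splitting}, in the spirit of Bruner's analogous argument in Chapter IV of \cite{HRS}. First I would construct the natural map $\phi$. Given $f : X \to \eHF_2 \wedge Y$, smashing with $\eHF_2$ and postcomposing with the multiplication $\eHF_2 \wedge \eHF_2 \to \eHF_2$ produces an $\eHF_2$-module map $\widetilde{f} : \eHF_2 \wedge X \to \eHF_2 \wedge Y$; applying $\pi_{**}$ and composing with the $\eHF_2$-action on the target yields the desired $\M_2$-linear map $\eHF_{2**}X \to \eHF_{2**}Y$. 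I would then verify $\phi$ is an isomorphism in the base case $X = S^{\alpha}$, where both sides compute $\eHF_{2, \alpha+*}(Y)$ directly from the definitions.

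For general $X$ with free $\eHF_{2**}$-homology, apply Lemma \ref{lemma:equi splitting} to obtain an equivalence of $\eHF_2$-modules
\[
\eHF_2 \wedge X \simeq \bigvee_i S^{\alpha_i} \wedge \eHF_2,
\]
where the $\alpha_i$ run over the bidegrees of an $\M_2$-basis for $\eHF_{2**}X$. Using the free--forgetful adjunction for $\eHF_2$-modules together with this splitting,
\[
[X, \eHF_2 \wedge Y] \;\cong\; [\eHF_2 \wedge X, \eHF_2 \wedge Y]_{\eHF_2\text{-mod}} \;\cong\; \prod_i [S^{\alpha_i}, \eHF_2 \wedge Y] \;\cong\; \prod_i \eHF_{2, \alpha_i + *}(Y),
\]
which matches the decomposition of $\Hom_{\M_2}(\eHF_{2**}X, \eHF_{2**}Y)$ obtained from a free $\M_2$-basis on the same generators, and one checks this chain of identifications realizes $\phi$. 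The extension from free to projective $\eHF_{2**}X$ is then a formality: if $P \oplus Q$ is free, naturality of $\phi$ exhibits it as a retract of an isomorphism and hence an isomorphism.

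The main obstacle is the passage from \emph{wedge} on the source side to \emph{product} on the target side: since $\eHF_{2**}X$ may be infinitely generated, one has to know that $[-,\eHF_2 \wedge Y]$ sends the infinite wedge decomposition of $\eHF_2 \wedge X$ to the corresponding product without a $\lim^1$ contribution. This is precisely why the proposition imposes hypotheses on $Y$. Because $Y$ is assumed to be a finite-type $Rep(C_2)$-cell complex smashed with $\eHF_2^{\wedge s} \wedge \oo{\eHF}_2^{\wedge t}$, all of its smash factors have free $\eHF_{2**}$-homology (using Kronholm's freeness theorem for the $Rep(C_2)$-cell complex factor and \Cref{thm:flatness} for the $\eHF_2$ factors, together with the splitting off of the unit from $\oo{\eHF}_2 \wedge \eHF_2$ as an $\M_2$-module), and $\eHF_2 \wedge Y$ is itself a controlled wedge of suspensions of $\eHF_2$ by Lemma \ref{lemma:equi splitting}. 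The bidegrees of this splitting together with those of the $\alpha_i$ keep all the relevant homotopy groups in a range in which wedges and products agree, reducing the verification back to the sphere case already handled.
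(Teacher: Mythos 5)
Your proof is correct in outline but takes a genuinely different route than the paper. The paper simply invokes the Lewis--Mandell $RO(C_2)$-graded universal coefficient spectral sequence: for $\eHF_2$-modules $A,B$ there is a spectral sequence $\Ext_{\M_2}^{*,*,*}(\pi_{**}A,\pi_{**}B)\Rightarrow[A,B]_{\eHF_2}$, and applying this with $A=\eHF_2\wedge X$, $B=\eHF_2\wedge Y$ (together with the free--forgetful adjunction $[\eHF_2\wedge X,\eHF_2\wedge Y]_{\eHF_2}\cong[X,\eHF_2\wedge Y]$, which you also use) one sees that projectivity of $\eHF_{2**}X$ kills all $\Ext^{s}$ for $s>0$, so the spectral sequence collapses onto the $\Hom$-line. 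That one-line argument absorbs the free \emph{and} projective cases simultaneously and buries the wedge-versus-product bookkeeping inside the spectral sequence's construction and convergence. Your argument instead unwinds the zero line of that spectral sequence by hand: you split $\eHF_2\wedge X$ explicitly via \Cref{lemma:equi splitting} and match the two decompositions term by term, which makes the $\lim^1$/convergence role of the finite-type hypothesis on $Y$ visible rather than implicit. That is a legitimate and arguably more transparent route for the free case.

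One small gap in your version: the passage from free to projective $\eHF_{2**}X$ is not quite "a formality" as stated. The retract argument requires a spectrum (or at least an $\eHF_2$-module) realizing the algebraic complement $Q$ with $P\oplus Q$ free, and then a geometric splitting $\eHF_2\wedge X\vee N\simeq\bigvee S^{\alpha_i}\wedge\eHF_2$; this is true (idempotents split in the triangulated category of $\eHF_2$-modules, and the collapsing K\"unneth spectral sequence lets you realize projectives), but it needs to be said. The UCSS argument in the paper avoids this entirely, since $\Ext^{>0}_{\M_2}(P,-)=0$ for $P$ projective requires no geometric realization.
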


\begin{proof}
We will apply the $RO(C_2)$-graded UCSS to obtain this result.
For $A$ and $B$ $\eHF_2$-modules, this spectral sequence has the form
\[
\Ext_{\M_2}^{*,*,*}(\pi_*A,\pi_*B) \ssra [A,B]_{\eHF_2}.
\]
Thus we have the spectral sequence
\[
\Ext_{\M_2}^{*,*,*}(H_{**}(X;\eF_2),H_{**}(Y;\eF_2)) \ssra [\eHF_2\wedge X,\eHF_2\wedge Y]_{\eHF_2}\iso[X,\eHF_2\wedge Y].
\]
As we have assumed that $H_{**}(X;\eF_2)$ is projective over $\M_2$ this $RO(C_2)$-graded UCSS collapses onto the zero line.
Thus we have our desired result.
\end{proof}

\begin{proof}[\Cref{thm: hinfty}]
Our proof follows that of Bruner's Theorem 5.2 in \cite{HRS}.
Note that to translate from our notation to that of Bruner in \cite{HRS} remember that our $s$ is the negative of his.
To prove this result we will construct the maps
\[
\xi_{i,s} : Z_{i,s}:=E\pi^{(i)}\wedge_{\pi} \Gamma^r_{s} \lra Y_{s+i}
\]
one at a time so that they are compatible in the sense that both diagrams
\[
\xymatrix{
Z_{i,s}\ar[rr] \ar[d]_{\xi_{i,s}}&
&
D_{\pi}^{tr} Y \ar[d]_{\xi}&
&
Z_{i-1,s} \ar[rr] \ar[d]_{\xi_{i-1,s}}&
&
Z_{i,s} \ar[rr] \ar[d]_{\xi_{i,s}}&
&
Z_{i,s+1}\ar[d]^{\xi_{i,s+1}}\\
Y_{s+i}\ar[rr]&
&
Y&
&
Y_{s+i-1}\ar[rr]&
&
Y_{s+i}\ar[rr]&
&
Y_{s+i+1}
\\
}
\]
commute.
This map will be constructed by showing that the obstruction to its existence is in the image of the zero map.

Note that we already have the maps
\[
\xi_{0,s}:Z_{0,s}=\Gamma^r_s\lra Y_s.
\]
These exist because $\Yb$ is a multiplicative filtration.
The proof that this filtration is multiplicative is identical to that of Bruner, see Chapter 4 Section 4 of \cite{HRS}.
All that is necessary is the flatness and projectivity, which we indeed have in our situation.
Thus these $\xi_{0,s}$ provide for the beginning of our inductive construction.
Suppose we already have constructed $\xi_{i,s}$ suitably for all $i<k$.
Whenever $-s<k$ we define $\xi_{k,s}$ to be the composite
\[
Z_{k,s} \hra E\pi_+\wedge_{\pi} \Gamma^r_0= D_{\pi}^{tr} Y \lra Y=Y_0
\]
which exists as $Y$ is an $\Hi$-ring spectrum.
Therefore we can assume that $\xi_{k,s'}$ has been constructed for all $s<s'$.
We then must construct $\xi_{k,s}$ compatibly, which amounts to ensuring that the diagram
\[
\xymatrix{
Z_{k-1,s} \ar[dd] \ar[rr]&
&
Z_{k-1,s+1} \ar[dd] \ar[rrr]_{\xi_{k-1,s+1}}&
&
&
Y_{s+k} \ar[dd]\\
&
&
&
&
&
\\
Z_{k,s} \ar[rr] \ar@{-->}[uurrrrr]^{\xi_{k,s}}&
&
Z_{k,s+1} \ar[rrr]^{\xi_{k,s+1}}&
&
&
Y_{k+s+1}
}
\]
commutes.

The obstruction to the existence of such a $\xi_{k,s}$ making the above diagram commute exists.
In \cite{ChristensenDwyerIsaksenObstModelCats}, Christensen, Dwyer, and Isaksen establish a general theory of obstruction theory in model categories..
In Section 8 of that paper, they show that our setting has such an obstruction theory.
The general machinery of that paper can then be applied to see that the obstruction to the existence of such a map lies in the group of homotopy classes of maps
\[
\mathcal{O}(\xi_{k,s})\in [Z_{k,s}^1,Y_{k+s}^1]
\]
where
\[
Z_{k,s}^n:=cof(Z_{k-n,s}\ra Z_{k,s})
\]
and
\[
Y_{k+s}^n:=cof(Y_{k+s}\ra Y_{k+s+n}).
\]
This obstruction is alsonatural.

Recall that for the canonical $\eHF_2$-Adams tower we have that
\[
Y^1_{k+s}\we Y \wedge \oo{\eHF}^{\wedge -(k+s)}_2\wedge \eHF_2.
\]
If $\eHF_{2**}(Z^1_{k,s})$ is projective then we can use the above result to identify the group containing the obstruction
\[
\mathcal{O}(\xi_{k,s})\in [Z^1_{k,s},Y^1_{k+s}]
\]
with
\[
\Hom_{\M_2}(\eHF_{2**}(Z^1_{k,s}),\pi_{**}(Y^1_{k+s})).
\]
Using part 2 of the Geometric condition we will see that $\eHF_{2**}(Z^1_{k,s})$ is a projective $\M_2$-module.
This follows from a couple of different results.
The Freeness Theorem of Kronholm, \Cref{thm:flatness} of Hu and Kriz paired with the $RO(C_2)$-graded KSS imply that
\[
\eHF_{2**}(Z^1_{k,s}) = \eHF_{2**}(\frac{B\pi^{(i)}}{B\pi^{(i-1)}}) \tensor_{\M_2} \eHF_{2**}(\Gamma^r_s).
\]

$\eHF_{2**}(\Gamma^r_s)$ is projective as it is a summand of a free module.
To see this note that the map 
\[
\Gamma^r_s \lra \Gamma^r_{s+1}
\]
is $0$ in $\eHF_2$-homology as it is a pushout of smash products of maps that induce $0$ in $\eHF_2$-homology.
This uses the collapsing of the KSS for the $\eHF_2$-homology of each $Y_s$.
Thus we have a short exact sequence
\[
0\lra \eHF_{2**}(\Gamma^r_{s+1})\lra \eHF_{2**}(\Gamma^r_{s+1}/\Gamma^r_s)\lra \eHF_{2**}(\sus^{1,0}\Gamma^r_s) \lra 0
\]
of $\M2$-modules.
Recall that $\eHF_{2**}(\Gamma^r_{s+1}/\Gamma^r_s)$ is a term in $\eHF_{2**}(E^0(\Gb^r)$ which is just $\eHF_{2**}(E^0(\Yb))^{\tensor r}$ by another collapsing KSS.
The above sequence is split and $\eHF_{2**}(\Gamma^r_{s+1})$ is projective over $\M_2$ whenever $\eHF_{2**}(\sus^{1,0}\Gamma^r_s)$ is projective.
As $\Gamma_0^r=Y_0^r$ has projective $\eHF_2$-homology by assumption and the KSS.
Thus each $\Gamma^r_s$ has projective $\eHF_2$-homology as an $\M_2$-module by induction.

Now we have the following string of isomorphisms
\begin{align*}
[Z^1_{k,s},Y^1_{k+s}] & \iso [Z^1_{k,s},\bigvee_{\alpha_i}\sus^{\alpha_i}\eHF_2]\\
 & \iso \bigoplus_{\alpha_i}\eHF_2^*(\sus^{-\alpha_i}Z^1_{k,s}) \\
 & \iso \Hom_{\M_2}(\eHF_{2**}(Z^1_{k,s}),\pi_{*}(Y^1_{k+s})).
\end{align*}
The naturality of the obstruction implies that it is in the image of the map induced by 
\[
1 \wedge j_s: Z^1_{k,s} \lra Z^1_{k,s+1}
\]
by part 2 of the Geometric Condition, where
\[
j_s: \Gamma^r_s \lra \Gamma^r_{s+1}
\]
is the map in the Adams tower $\Gb^r$.
Therefore $1 \wedge j_s$ induces $0$ in homology.
Thus the obstruction vanishes as it is in the image of the $0$-map.
\end{proof}

\section{Extended powers of spheres and Thom complexes}
\label{sec:Thom}
In this section we identify the extended powers of representation spheres with truncated equivariant projective spaces.
In the next section we analyze the cell structure of these extended powers.
This will determine the differentials in spectral sequence for extended powers of permanent cycles.
This is the meat of ingredient four.
The identification of extended powers of spheres with truncated projective spaces goes back to Atiyah.
Our proof is simply a realization that the classical proof works equivariantly.
As there will be two groups of order two floating around we will distinguish between the external and internal actions by writing $\Sigma_2$ for the external action and $C_2$ for the internal action.
Note also that this result is where there are some difficulties in moving to the motivic setting.

\begin{defn}
We denote by $D^{tr}_2(X):=S(\R^{\infty,0})_+\wedge_{\Sigma_2}(X\wedge X)$ and $D^{\infty}_2(X):=S(U)_+\wedge_{\Sigma_2}(X\wedge X)$ where $U$ is the complete $C_2$-universe $\dsum_i \R^{2,1}$.
We will use 
\[
D^{tr(2n)}_2(X):=S(\R^{2n+1,0})_+\wedge_{\Sigma_2}(X\wedge X)
\] and 
\[
D^{\infty(2n)}_2(X):=S(\R^{2n+1,n})_+\wedge_{\Sigma_2}(X\wedge X)
\]
to denote the topological $2n$-skeletons of the respective extended power constructions. 
\end{defn}
We will in fact only use the even skeletons as this is the most natural way of filtering the complete universe $U$.
If more is desired then a choice must be made, and there are preferred choices.
These are dictated by wanting the cohomology to be easy to compute and so we alternate between adding a trivial representation cell and a non trivial representation cell.
This choice forces the cellular spectral sequence, see for example \cite{KronholmFree}, to collapse immediately.

The following can be found in \cite{HRS}, but we recall its proof as it will extend to the equivariant setting.

\begin{prop}
$D_2 S^n \iso \sus^n \RP^{\infty}/\RP^{n-1}$ and further $D^{(k)}_2 S^n \iso \sus^n \RP^{n+k}/\RP^{n-1}$.
\end{prop}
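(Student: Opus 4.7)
The plan is to realize $D_2 S^n$ as a Thom space via the standard Borel-construction argument. First I would identify $S^n \wedge S^n$, with the swap $\Sigma_2$-action, as the representation sphere $S^{n\rho}$ where $\rho = 1\dsum\sigma$ is the regular representation and $\sigma$ is the sign representation. The classical change of coordinates $(x,y)\mapsto((x+y)/\sqrt{2},(x-y)/\sqrt{2})$ decomposes $n\rho \iso n \dsum n\sigma$, so one-point compactification gives a $\Sigma_2$-equivariant homeomorphism $S^n\wedge S^n \iso S^n \wedge S^{n\sigma}$ in which $\Sigma_2$ acts trivially on the first factor and by the sign representation on the second.

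Second, since the first smash factor is $\Sigma_2$-fixed it pulls out of the Borel construction, yielding
\[
D_2 S^n \;=\; S(\R^{\infty,0})_+ \wedge_{\Sigma_2} (S^n\wedge S^n) \;\iso\; \sus^n\bigl(S(\R^{\infty,0})_+ \wedge_{\Sigma_2} S^{n\sigma}\bigr).
\]
The bracketed term is, by definition, the Thom space of the vector bundle on $B\Sigma_2 = \RP^\infty$ associated to the representation $n\sigma$; this bundle is $n\gamma$, the $n$-fold Whitney sum of the tautological line bundle $\gamma \to \RP^\infty$.

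Third, I would recall the standard identification $\mathrm{Th}(n\gamma \to \RP^\infty) \iso \RP^\infty/\RP^{n-1}$, proved at finite level as follows. In homogeneous coordinates on $\RP^{n+k}$, write a point as $[x_0:\cdots:x_{n-1}:y_0:\cdots:y_k]$ and let $\RP^{n-1}\subset \RP^{n+k}$ be the linear subspace where all $y_j$ vanish. The complement fibers over $\RP^k$ via $[x:y]\mapsto [y]$ with linear fibers $\R^n$, exhibiting this complement as the total space of $n\gamma \to \RP^k$. Collapsing $\RP^{n-1}$ therefore gives $\RP^{n+k}/\RP^{n-1} \iso \mathrm{Th}(n\gamma \to \RP^k)$, and taking $\colim_k$ produces the infinite version.

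Finally, the skeletal statement is formally the same argument with $S(\R^{\infty,0})$ replaced by $S(\R^{k+1,0}) = S^k$, whose $\Sigma_2$-orbit space is $\RP^k$; one obtains $D^{(k)}_2 S^n \iso \sus^n \mathrm{Th}(n\gamma \to \RP^k) \iso \sus^n \RP^{n+k}/\RP^{n-1}$. The only non-bookkeeping step is the Thom-space identification, and this is the piece I expect to require genuine care when the whole argument is extended in the next section to representation spheres $S^{p,q}$, where the base must be replaced by a $C_2$-equivariant projective space and the fiber representation tracked bigradedly.
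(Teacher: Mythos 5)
Your proposal is correct and takes essentially the same route as the paper: decompose $n\rho \iso n\epsilon \oplus n\sigma$, pull out the trivial suspension, and identify the residual Borel construction as $\mathrm{Th}(n\gamma)\iso\RP^{n+k}/\RP^{n-1}$. The only cosmetic difference is in that last identification: the paper views $n\lambda$ as the normal bundle of $\RP^k\hookrightarrow\RP^{n+k}$ and invokes the Pontryagin--Thom construction, whereas you directly exhibit the complement of $\RP^{n-1}$ as the total space of $n\gamma\to\RP^k$ and observe that the collapse is the one-point compactification; these are the same construction read in opposite directions.
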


\begin{proof}
By definition, $D_2 S^n = E\Sigma_{2+} \wedge_{\Sigma_2} S^n \wedge S^n$ where $\Sigma_2$ acts on $S^n \wedge S^n$ via the twist map.
We will prove the result by identifying this quadratic construction as the Thom complex of a bundle over $B\Sigma_2=\RP^{\infty}$.
If $\xi$ is an $n$-plane bundle over $BG$ then we can realize the total space of $\xi$ as $EG \times_G V$ where $V$ is a $n$-dimensional $G$ representation (with a $G$ invariant inner product).
This extends to the associated disk and sphere bundles of $\xi$ giving isomorphisms 

\[D(\xi) \iso EG \times_G D(V) \  \  and \ \ S(\xi) \iso EG \times_G S(V).
\] 
Recall that the Thom construction is given by 
\[
Th(\xi)=\frac{D(\xi)}{S(\xi)}.
\]
We have the following chain of homeomorphisms.
\begin{center}
$\displaystyle\frac{EG \times D(V)}{EG \times S(V)}\iso \frac {EG \times D(V)/S(V)}{EG \times \ast} \iso \frac{EG \times S^V}{EG \times \ast} \iso \frac{EG_+ \times S^V}{EG_+ \vee S^V} \iso EG_+ \wedge S^V$
\end{center}
It is equivariant with respect to the diagonal action.
Thus, we have 
\[
Th(\xi) \iso EG_+ \wedge_G S^V.
\]

So now we wish to identify $D_2 S^n$ as the Thom space of a bundle.
Consider the $2n$-plane bundle $\gamma$ over $B\Sigma_2$ with fiber $\R^n \oplus \R^n$ having $\Sigma_2$ act by the twist map.
This is equivalent the tensor product $\R^n \tensor \R^2$ where the first factor is the $n$-dimensional trivial representation and the second is the regular representation.
The regular representation of $\Sigma_2$ decomposes and so we also have that $\gamma$ decomposes as
\[
\gamma \iso n\epsilon \dsum n\lambda
\]
where $\epsilon$ is the trivial line bundle and $\lambda$ is the canonical line bundle.
Note that all of this works over a finite skeleton of $B\Sigma_2$; that is, as bundles over $\RP^k$.
As such, we will now work implicitly over $\RP^k$ and all bundles will be restricted without a change in notation.

The Thom complex $Th(n\epsilon \oplus n\lambda)$ is just $\sus^n Th(n\lambda)$.
We compute $Th(n\lambda)$ by realizing that $n\lambda$ is the normal bundle $\nu$ to the standard embedding of  $\RP^k$ in $\RP^{n+k}$.
This embedding of $\RP^k$ in $\RP^{n+k}$ is gotten by sending every $k+1$-tuple 
\[
[x_0:x_1: ...:x_k] \mapsto [x_0:x_1:...:x_k:0:...:0].
\]
Any point in the total space of $\nu$ is of the form $[x_0:x_1:...:x_k:a_1:a_2:...:a_n]$. 
The isomorphism of $\nu$ with $n\lambda$ is given by taking $a_i$ to be the $i$th coordinate of $n\lambda$.
The total space of $\nu$ is all of $\RP^{n+k}$ except for the orthogonal $\RP^{n-1}$, points with homogeneous coordinates $[0:...:0:x_{k+1}:...:x_{n+1}]$.
Therefore we have
\[
Th(\nu) \iso \frac{\RP^{n+k}}{\RP^{n-1}}
\]
by the Pontryagin-Thom construction which identifies the Thom space of the normal bundle to an embedding $i: X \hra Y$ with the quotient of the ambient space by the complement of the normal bundle.
We now have our desired homeomorphism
\[
S^k_+\wedge_{\Sigma_2}(S^n\wedge S^n)\iso \sus^n \frac{\RP^{n+k}}{\RP^{n-1}}
\]
as the composition
\[
S^k_+\wedge_{\Sigma_2}(S^n\wedge S^n)\iso Th(n\epsilon \dsum n\lambda) \iso \sus^n Th(\nu) \iso \sus^n \frac{\RP^{n+k}}{\RP^{n-1}}.
\]
\end{proof}

We are interested in the above result for the equivariant spheres $S^{p,q}$.
The above proof works equally well in the equivariant category.
Replace all of the $\Sigma_2$-representations with $\Sigma_2$-representations in the category of $C_2$-spaces.
Here, we use $\widehat{E}$ to denote the ``universal space'' construction in the category of $C_2$-spaces, and the superscript denotes which skeleton we are considering.
There are (at least) two different possible models for $\widehat{E}\Sigma_2$ in $C_2$-spaces, one with a trivial $C_2$-action and one coming from the complete universe.
The ``trivial'' case is obtained by considering $S^{\infty,0}$ whereas the other extreme is gotten by considering $S^{2\infty,\infty}:=\cup_n S^{2n,n}$.
We will use the symbols $\widehat{E}\Sigma^{(n,m)}_2$ and $S^{n,m}$ interchangeably in the following proposition.
The following equivariant projective spaces $\RP^{p,q}$ are the spaces of $1$-dimensional subspaces of $\R^{p+1,q}$.
We will say more about them in the next section.

\begin{prop}
\label{prop:thom spaces}
$\widehat{E}\Sigma^{(n,m)}_{2+} \wedge_{\Sigma_2} (S^{p,q} \wedge S^{p,q}) \iso \sus^{p,q} \RP^{n+p,m+q} / \RP^{p-1,q}$
\end{prop}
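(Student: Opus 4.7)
The plan is to imitate the classical proof above verbatim, replacing each object by its $C_2$-equivariant counterpart. The chain of homeomorphisms producing the Thom-space identification $EG_+\wedge_G S^V \iso Th(EG\times_G V)$ relies only on formal manipulations of smash products, quotients, and products, and so goes through when $EG$, $V$, and $BG$ carry compatible $C_2$-actions. Applying it with $G = \Sigma_2$, $EG = \widehat{E}\Sigma_2^{(n,m)} = S^{n,m}$ (endowed with its free antipodal $\Sigma_2$-action and its standard $C_2$-action), and $V = \R^{p,q}\tensor \R^2_{\mathrm{swap}}$ identifies
\[
\widehat{E}\Sigma_{2+}^{(n,m)}\wedge_{\Sigma_2}(S^{p,q}\wedge S^{p,q}) \iso Th\bigl(\widehat{E}\Sigma_2^{(n,m)}\times_{\Sigma_2}V\bigr)
\]
as a based $C_2$-space over $\widehat{E}\Sigma_2^{(n,m)}/\Sigma_2 = \RP^{n,m}$.

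I would then decompose $V$ as a $\Sigma_2$-representation internal to $C_2$-spaces. The regular $\Sigma_2$-representation $\R^2_{\mathrm{swap}}$ splits as trivial plus sign, so $V \iso \R^{p,q}\dsum (\R^{p,q})^{\mathrm{sgn}}$, where the second summand has $\Sigma_2$ acting by $-1$ on a copy of $\R^{p,q}$. Correspondingly, the associated bundle splits as a trivial $\R^{p,q}$-bundle plus a twisted bundle $\eta := \widehat{E}\Sigma_2^{(n,m)}\times_{\Sigma_2}(\R^{p,q})^{\mathrm{sgn}}$, which is canonically the tensor product $\lambda\tensor \R^{p,q}$ of the canonical $C_2$-equivariant line bundle $\lambda$ over $\RP^{n,m}$ with the constant bundle $\R^{p,q}$. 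Consequently,
\[
Th\bigl(\widehat{E}\Sigma_2^{(n,m)}\times_{\Sigma_2}V\bigr) \iso \sus^{p,q}Th(\eta).
\]

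The final step is an equivariant Pontryagin-Thom computation identifying $Th(\eta)$ with $\RP^{n+p,m+q}/\RP^{p-1,q}$. The $C_2$-equivariant inclusion $\R^{n+1,m}\hra \R^{n+p+1,m+q}$ as the last coordinates, together with the orthogonal splitting $\R^{n+p+1,m+q}\iso \R^{p,q}\dsum \R^{n+1,m}$, induces an equivariant embedding $\RP^{n,m}\hra \RP^{n+p,m+q}$ whose normal bundle at a line $\ell\subset\R^{n+1,m}$ has fibre $\Hom(\ell,\R^{p,q})\iso \lambda\tensor \R^{p,q}=\eta$ (using that $\lambda\tensor\lambda$ is equivariantly trivialised by the invariant inner product). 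The total space of this normal bundle is the open complement in $\RP^{n+p,m+q}$ of the lines contained entirely in the first $\R^{p,q}$ coordinates, namely $\RP^{p-1,q}$, and collapsing this complement yields $Th(\eta)\iso \RP^{n+p,m+q}/\RP^{p-1,q}$.

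The main obstacle is book-keeping rather than anything conceptual: each step of the classical argument must be checked as a statement about $C_2$-spaces, with attention to how the $C_2$-action on $\R^{p,q}$ interacts with the antipodal action on $\widehat{E}\Sigma_2^{(n,m)}$ when forming $\lambda\tensor\R^{p,q}$, and to the availability of an equivariant tubular neighbourhood for the linear embedding $\RP^{n,m}\hra\RP^{n+p,m+q}$. Because all of these actions are linear in representation-theoretic coordinates, no genuinely new difficulty arises beyond the non-equivariant case.
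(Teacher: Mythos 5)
Your proof is correct and takes essentially the same route as the paper: identify the quadratic construction as a Thom space via $EG_+\wedge_G S^V\iso Th(EG\times_G V)$, split off the trivial summand to get a $\Sigma^{p,q}$-suspension, and identify the remaining Thom space via an equivariant Pontryagin--Thom argument for the linear embedding $\RP^{n,m}\hra\RP^{n+p,m+q}$. The only differences are cosmetic --- you describe the embedding as ``last coordinates in an orthogonal splitting $\R^{p,q}\dsum\R^{n+1,m}$'' while the paper writes out an equivalent interleaved coordinate form to make the $C_2$-weights line up, and you are slightly more explicit about the identification $\nu\iso\Hom(\lambda,\R^{p,q})\iso\lambda\tensor\R^{p,q}$ via the invariant inner product and about the need for an equivariant tubular neighbourhood, both of which the paper treats implicitly.
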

\begin{proof}
We want to identify $S^{n,m}_+ \wedge_{\Sigma_2} (S^{p,q} \wedge S^{p,q})$ as some truncated equivariant projective space, where $\Sigma_2$ is acting on $S^{n,m}$ via the antipodal action on the underlying $S^n$.
As above, we have the identification
\[
S^{n,m}_+ \wedge_{\Sigma_2} (S^{p,q} \wedge S^{p,q}) \iso Th(\R^{p,q}\tensor \epsilon \dsum \R^{p,q} \tensor \lambda_{n,m})
\]
where $\lambda_{n,m}$ is the canonical line bundle on $\RP^{n,m}$.
This bundle has an internal $C_2$ action: as $\RP^{n,m}$ is obtained from $\R^{n+1,m}$, it inherits a $C_2$ action, which permutes lines in the underlying $\RP^n$.
Just as above, we have that
\[
Th(\R^{p,q}\tensor \epsilon \dsum \R^{p,q} \tensor \lambda_{n,m}) \iso \sus^{p,q} Th(\R^{p,q} \tensor \lambda_{n,m}) \iso \sus^{p,q} Th( \nu(\RP^{n,m} \hookrightarrow \RP^{n+p,m+q})).
\]
In order to have the right $C_2$-equivariant structure, we embed $\RP^{n,m}$ as the subspace of points with homogenous coordinates of the form 
\[
[x_0:x_1:\ldots:x_{n-m}:0:\ldots:0:x_{n+p+1-m-q}:x_{n+p+2-m-q}:\ldots:x_{n+p-q}:0:\ldots:0] \in \RP^{n+p,m+q}.
\]
The complement to the normal bundle of this embedding is then $RP^{p-1,q}$ embedded as 
\[
[0:0:\ldots:0:x_{n-m+1}:\ldots:x_{n+p-m-q}:0:0:\ldots:0:x_{n+p+1-q}:\ldots:x_{n+p}] \in \RP^{n+p,m+q}.
\]
Therefore the Thom space of this normal bundle is 
\[
Th(\nu(\RP^{n,m} \hookrightarrow \RP^{n+p,m+q}))\iso \frac{\RP^{n+p,m+q}}{\RP^{p-1,q}}.
\]
\end{proof}

We now have two models of the quadratic construction.
For our purposes, we will be more interested in the ``naive'' quadratic construction which we denote by $D^{tr}_2(-)$, see \Cref{rmk:why naive}.
The other one may provide for formulas in other equivariant Adams spectral sequences which we won't speculate about here.

\begin{cor}
\label{cor:equiv extended powers}
$D^{tr}_2(S^{p,q})\iso \sus^{p,q} \RP^{\infty+p, q}/\RP^{p-1,q}$ and $D^{\infty}_2(S^{p,q}) \iso \sus^{p,q} \RP^{\infty+p, \infty+q}/\RP^{p-1,q}$.
\end{cor}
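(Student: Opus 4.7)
The plan is to deduce the corollary from \Cref{prop:thom spaces} by passing to the appropriate colimit on both sides. By definition, $D^{tr}_2(S^{p,q}) = S(\R^{\infty,0})_+ \wedge_{\Sigma_2}(S^{p,q}\wedge S^{p,q})$ is the colimit over $n$ of $S^{n,0}_+ \wedge_{\Sigma_2}(S^{p,q}\wedge S^{p,q})$, corresponding to setting $m=0$ in the proposition. Likewise, $D^{\infty}_2(S^{p,q}) = S(U)_+\wedge_{\Sigma_2}(S^{p,q}\wedge S^{p,q})$ is the colimit over the preferred exhaustion $U \we \colim_n \R^{2n,n}$, so that on the left we take the colimit over the pairs $(n,m) = (2k,k)$ as $k\to\infty$. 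Applying \Cref{prop:thom spaces} termwise gives the proposed identifications.

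To justify interchanging the colimit with $\RP^{n+p,m+q}/\RP^{p-1,q}$, I would verify that the identification provided in the proof of \Cref{prop:thom spaces} is natural in the universal space, so that the sequence of inclusions $\widehat{E}\Sigma_2^{(n,m)}\hookrightarrow \widehat{E}\Sigma_2^{(n',m')}$ matches, under the Pontryagin-Thom identification, the evident skeletal inclusions $\RP^{n+p,m+q}/\RP^{p-1,q}\hookrightarrow \RP^{n'+p,m'+q}/\RP^{p-1,q}$ coming from extending the embedding $\RP^{n,m}\hookrightarrow \RP^{n',m'}$ into $\RP^{n'+p,m'+q}$. This is a direct check: the embedding of $\RP^{n,m}$ into $\RP^{n+p,m+q}$ used in the proof of \Cref{prop:thom spaces} is compatible with enlarging $n$ or $m$, because we have only added extra zero coordinates to the chosen homogeneous representative, and the normal bundle $\R^{p,q}\tensor \lambda_{n,m}$ restricts correctly. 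Thus the colimit of the right-hand side is $\sus^{p,q}\RP^{\infty+p,q}/\RP^{p-1,q}$ in the first case and $\sus^{p,q}\RP^{\infty+p,\infty+q}/\RP^{p-1,q}$ in the second.

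The only mild obstacle is the fact that the complete universe $U$ does not naturally admit a bigraded skeletal exhaustion with a single index, so one has to fix a cofinal sequence (here $(2k,k)$) to define $\RP^{\infty+p,\infty+q}$ unambiguously. Since $C_2$-equivariant weak equivalences are detected on fixed points and underlying spaces (and any two cofinal exhaustions of $U$ yield weakly equivalent colimits), this choice does not affect the homotopy type. With naturality of the Thom-space identification in hand, the corollary then follows by taking colimits.
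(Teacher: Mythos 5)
Your proposal is correct and matches the paper's (implicit) argument: the corollary is stated without its own proof precisely because it is the colimit of \Cref{prop:thom spaces}, with $m=0$ for the trivial universe and along the cofinal exhaustion $\R^{2k,k}$ for the complete universe. The naturality check you flag (that the Pontryagin--Thom identification is compatible with the skeletal inclusions, since the embeddings of $\RP^{n,m}$ only add zero coordinates) is exactly what makes this legitimate, and is straightforward from the explicit embeddings used in the proof of \Cref{prop:thom spaces}.
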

This formulation is not as clear as one might hope.
Instead, let us focus on the $2$-skeletons.
\begin{center}
$D^{tr(2)}_2(S^{p,q})\iso \sus^{p,q}\frac{\RP^{p+2, q}}{\RP^{p-1,q}}$ and $D^{\infty (2)}_2(S^{p,q}) \iso \sus^{p,q}\frac{\RP^{p+2,q+1}}{\RP^{p-1,q}}$.
\end{center}
While we only need to consider the topological $1$-skeleton in order to investigate the differentials, the difference between the two complexes may be easier to see when we consider the $2$-skeleton given the cell structure of $\widehat{E}\Sigma_2^{(n,m)}$.
It will always be best to consider the even dimensional topological skeletons.
In that situation, we have a good model for $\widehat{E}\Sigma_2^{(2n)}$, namely $S(\displaystyle\bigoplus_{i=1}^n\R^{2,1})$.

\section{Equivariant projective spaces}
\label{sec:rppqs}
In this section we will compute enough about the cell structure of these projective spaces in order to produce the desired formulas for differentials.
This amounts to understanding the attaching maps
\[
S^n \to X_n \to X_n/X_{n-1}\we \bigvee S^n
\]
as an element of $\bigoplus \pi_0(S^0)$.
This will be the coefficient of the differentials in \Cref{sec:equi diff}.

\begin{defn}
Let $\RP^{p,q}$ denote the space of 1-dimensional subspaces of $\R^{p+1,q}$.
\end{defn}
Another model for $\RP^{p,q}$ is as the quotient of $\R^{p+1,q}$ by the action of $\R^{\times}$.
One can also restrict to the unit sphere $S^{p-1,q}$ and quotient out by the antipodal action.
The $C_2$ action is inherited from the $C_2$ action on $\R^{p,q}$.
If we consider a point $[x_0:x_1:\cdots:x_{p}]\in \RP^{p,q}$, then $C_2$ acts by negating the last $q$ coordinates.
The following observations will be useful later in our analysis.

\begin{lemma}
The equivariant real projective spaces $\RP^{p,q}$ and $\RP^{p,p-q+1}$ are identical as $C_2$-spaces.
\end{lemma}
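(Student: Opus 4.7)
The plan is to exhibit an explicit $C_2$-equivariant homeomorphism realizing the identification. The key observation is that in projective space we may rescale any representative by $-1$, so the $C_2$-action on $\RP^{p,q}$ — which on vectors negates the last $q$ coordinates of $\R^{p+1,q}$ — may equivalently be described as negating the first $p+1-q$ coordinates. Thus the action on a line depends only on the splitting of the ambient representation into $(\pm 1)$-eigenspaces up to the swap of the two factors, and this symmetry is exactly what underlies the claim.

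Concretely, I would decompose $\R^{p+1,q} = V_+ \oplus V_-$ into its $(p+1-q)$-dimensional trivial part $V_+$ and $q$-dimensional sign part $V_-$, and similarly $\R^{p+1,p-q+1} = W_+ \oplus W_-$ with $\dim W_+ = q$ and $\dim W_- = p+1-q$. Picking any linear isomorphisms $\alpha\colon V_- \xrightarrow{\sim} W_+$ and $\beta\colon V_+ \xrightarrow{\sim} W_-$ (which exist because the dimensions match after the swap), I would define
\[
\phi \colon \RP^{p,q} \lra \RP^{p,p-q+1}, \qquad [v_+ + v_-] \longmapsto [\alpha(v_-) + \beta(v_+)].
\]
Continuity, bijectivity, and bicontinuity are automatic since $\phi$ is induced by a linear isomorphism of the underlying $\R^{p+1}$.

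To check $C_2$-equivariance I would simply compute both sides. The source action sends $[v_+ + v_-]$ to $[v_+ - v_-]$, which in projective coordinates equals $[-v_+ + v_-]$; applying $\phi$ yields $[\alpha(v_-) - \beta(v_+)]$. On the other hand, the target action applied to $\phi([v_+ + v_-]) = [\alpha(v_-) + \beta(v_+)]$ negates the $W_-$ component to produce $[\alpha(v_-) - \beta(v_+)]$ as well. These agree, so $\phi$ is $C_2$-equivariant. The hard part here is nothing more than notational bookkeeping — keeping the two eigenspace decompositions and sign conventions aligned — and there is no serious technical content beyond the rescaling observation, so I would present the argument tersely in a single short paragraph.
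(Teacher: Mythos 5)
Your proof is correct, and it identifies precisely the right mechanism: on lines, rescaling by $-1$ is invisible, so negating the $q$ sign coordinates agrees with negating the complementary $p+1-q$ trivial coordinates, which is exactly the $C_2$-action defining $\RP^{p,p-q+1}$. The paper states this lemma without proof, treating it as an elementary observation, so there is no competing argument to compare against; your explicit eigenspace-swapping homeomorphism is the natural way to spell it out.
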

\begin{lemma}
$\uu(\RP^{p,q})=\RP^p$ and $\ff(\RP^{p,q})=\RP^{q-1} \coprod \RP^{p-q}$.
\end{lemma}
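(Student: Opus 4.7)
The plan is to read off both equalities directly from the definition of $\RP^{p,q}$ as the space of $1$-dimensional subspaces of $\R^{p+1,q}$, using that the $C_2$-action on this space of lines is induced by the linear $C_2$-action on $\R^{p+1,q}$.

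For the forgetful statement, applying $\uu$ to $\R^{p+1,q}$ forgets the $C_2$-structure and yields the bare real vector space $\R^{p+1}$. Since $\uu$ preserves the underlying set of points (it is the identity on points and only changes the $C_2$-structure), the space of $1$-dimensional subspaces commutes with $\uu$, giving $\uu(\RP^{p,q}) = \RP^{p}$.

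For the fixed point statement, write $\sigma$ for the generator of $C_2$ acting on $\R^{p+1,q}$: it fixes the first $p+1-q$ coordinates and negates the last $q$. Decompose $\R^{p+1,q} = V^{+} \dsum V^{-}$ into the $\pm 1$ eigenspaces, so $V^{+} \iso \R^{p+1-q}$ and $V^{-} \iso \R^{q}$. A line $\ell \subset \R^{p+1,q}$ is a fixed point of $\RP^{p,q}$ iff $\sigma(\ell) = \ell$ as a subset; since $\sigma$ is linear of order $2$, its restriction to such a line is scalar multiplication by either $+1$ or $-1$. In the first case $\ell \subset V^{+}$, and in the second $\ell \subset V^{-}$; conversely, every line in $V^{\pm}$ is visibly sent to itself by $\sigma$. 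Because $V^{+} \cap V^{-} = 0$ no nonzero line can lie in both, so these two families of fixed lines are disjoint and exhaustive. The corresponding subspaces are canonically $\RP(V^{+}) = \RP^{p-q}$ and $\RP(V^{-}) = \RP^{q-1}$, each a closed subspace of the fixed set, which gives $\ff(\RP^{p,q}) = \RP^{q-1} \coprod \RP^{p-q}$.

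There is no real obstacle: the content is elementary linear algebra, and the only care needed is to track the indexing conventions — namely that $\RP^{p,q}$ is built from $\R^{p+1,q}$, so the $+1$-eigenspace of dimension $p+1-q$ contributes $\RP^{p-q}$ and the $-1$-eigenspace of dimension $q$ contributes $\RP^{q-1}$.
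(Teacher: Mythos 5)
Your proof is correct. The paper states this lemma without proof, as one of "the following observations" in \Cref{sec:rppqs}, so there is no argument in the text to compare against; your eigenspace decomposition $\R^{p+1,q} = V^{+} \dsum V^{-}$, the observation that a fixed line must lie entirely in one of the two eigenspaces because $\sigma$ restricted to a $1$-dimensional invariant subspace is $\pm 1$, and the dimension count giving $\RP^{p-q}$ and $\RP^{q-1}$ is exactly the standard justification the authors are implicitly appealing to.
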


Classically, $\pi_0(S^0)$ is just the integers.
Equivariantly, there is another generator $\varepsilon$ obtained by desuspending the twist map 
\[
S^{1,1}\wedge S^{1,1} \to S^{1,1}\wedge S^{1,1}.
\]
This brings us to the following computation from \cite{Bredon}.
\begin{thm}[Bredon]
\label{thm: pi_00}
We have that $\pi_{0,0}(S^{0,0}) \iso \Z^2$ generated by $1$ and $\varepsilon$.
\begin{eqnarray*}
deg(\uu(1)) & = & 1\\
deg(\ff(1)) & = & 1\\
deg(\uu(\varepsilon)) & = & -1\\
deg(\ff(\varepsilon)) & = & 1.
\end{eqnarray*}
\end{thm}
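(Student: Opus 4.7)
The plan is to identify $\pi_{0,0}(S^{0,0})$ with the Burnside ring $A(C_2)$ via Segal's theorem, compute the pair of marks on the two candidate generators $1$ and $\varepsilon$, and translate the data to the basis $\{1,\varepsilon\}$. Segal's theorem (the tom Dieck splitting at $n=0$) identifies $\pi_0^{C_2}(S^0)=\pi_{0,0}(S^{0,0})$ with $A(C_2)$, which is a free abelian group of rank two on $[C_2/C_2]$ and $[C_2/e]$, embedded into $\Z\times\Z$ by the pair of marks sending a virtual finite $C_2$-set $T$ to $(|T|,|T^{C_2}|)$; explicitly $[C_2/C_2]\mapsto(1,1)$ and $[C_2/e]\mapsto(2,0)$. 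These two marks are realized on $\pi_{0,0}(S^{0,0})$ by the underlying-degree map $\uu$ and the geometric-fixed-points-degree map $\ff=\Phi^{C_2}$, which is how the paper extends the fixed-point functor from spaces to spectra.

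Next I would compute the marks on $1$ and on $\varepsilon$. Both marks of the identity class are obviously $1$. For $\varepsilon$, recall that it desuspends the twist $\tau:S^{1,1}\wedge S^{1,1}\ra S^{1,1}\wedge S^{1,1}$. The underlying map $\uu(\tau)$ is the swap on $S^1\wedge S^1$, of degree $(-1)^{1\cdot 1}=-1$, so $\uu(\varepsilon)=-1$. Applying geometric fixed points and using $\Phi^{C_2}(S^{1,1})\we S^0$, the map $\ff(\tau)$ is the swap on $S^0\wedge S^0\we S^0$, which is the identity of degree $1$, so $\ff(\varepsilon)=1$.

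Finally I would verify that $\{1,\varepsilon\}$ is a $\Z$-basis. The images under $(\uu,\ff)$ are $(1,1)$ and $(-1,1)$, and the relation $[C_2/e]=1-\varepsilon$ (read off directly from the marks) identifies the change of basis from $\{[C_2/C_2],[C_2/e]\}$ to $\{1,\varepsilon\}$ as unimodular; hence $\pi_{0,0}(S^{0,0})\iso\Z\{1\}\oplus\Z\{\varepsilon\}$ with the stated marks. The main obstacle is bookkeeping rather than computation: one must be careful that the extension of $\ff$ to stable maps is geometric (not Lewis--May categorical) fixed points, so that it induces the standard ring mark on $A(C_2)$, and one must invoke Segal's theorem in the correct form. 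Once those identifications are made the degree calculation for the twist on $S^{1,1}\wedge S^{1,1}$ is routine.
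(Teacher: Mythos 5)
The paper gives no proof of this statement: it is cited to Bredon and used as a black box. Your proposal supplies a correct proof via the identification $\pi_{0,0}(S^{0,0})\iso A(C_2)$ from Segal's theorem (equivalently the tom Dieck splitting in degree zero) together with the pair of marks computed by $\uu$ and $\ff=\Phi^{C_2}$.

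The individual steps check out. The marks of $[C_2/C_2]$ and $[C_2/e]$ are $(1,1)$ and $(2,0)$; the class $1$ has marks $(1,1)$; for $\varepsilon$, the underlying twist on $S^1\wedge S^1$ has degree $(-1)^{1\cdot1}=-1$ while $\Phi^{C_2}$ sends $S^{1,1}\wedge S^{1,1}$ to $S^0\wedge S^0\we S^0$ where the swap is the identity, giving marks $(-1,1)$. Solving $[C_2/e]=1-\varepsilon$ shows the change of basis to $\{1,\varepsilon\}$ is unimodular, so $\{1,\varepsilon\}$ is a $\Z$-basis with the stated values of $\uu$ and $\ff$. One remark worth making explicit, though you allude to it: the claim that $\ff$ realizes the $C_2$-fixed-point mark on $A(C_2)$ genuinely requires using the geometric fixed-point functor (so that $\Phi^{C_2}\Sigma^\infty_+T\we\Sigma^\infty_+T^{C_2}$ for a finite $C_2$-set $T$), and the paper indeed specifies geometric fixed points in the remark following the theorem, so the two uses of $\ff$ are consistent. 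Since the paper does not argue the statement, your Burnside-ring route is the natural modern proof, whereas Bredon's original derivation (predating Segal and tom Dieck) is a direct unstable/cell computation; the Burnside ring approach is cleaner and generalizes immediately to any finite group.
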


\begin{rmk}
We will use the above theorem to determine the attaching maps of $Rep(C_2)$-cells in projective spaces.
This is done by looking at the effect of taking fixed points and forgetting the $C_2$-structure on the attaching map.
Such functors commute with the formation of cofiber sequences in the unstable category of $C_2$-equivariant cell complexes.
In the stable category though, there are multiple notions of fixed points.
We employ the notion of geometric fixed points, see Section 7 of \cite{SchwedeEquivariant}.
This construction, as mentioned in \Cref{subsec:CWcplexes}, has the property that taking geometric fixed points of a suspension spectrum is equivalent to taking the suspension spectrum of the fixed points of a based $G$-CW-complex.
It is also the case that one can utilize geometric fixed points to detect equivariant weak equivalences.
So in the proof of \Cref{prop:attaching maps} we proceed by computing attaching maps in the unstable category and then determining which element of $\pi_{0,0}S^{0,0}$ we get by stabilizing.
\end{rmk}

\begin{prop}
\label{prop:attaching maps}
As $C_2$-equivariant cell complexes, 
\[
\RP^{p,q}\iso \RP^{p-1,q}\cup_{f^1_{p-1,q}}CS^{p-1,q}
\]
and
\[
\RP^{p,q}\iso \RP^{p-1,q-1}\cup_{f^2_{p-1,p-q+1}}CS^{p-1,p-q+1}.
\]
Both such attaching maps give the same element of $\pi_{0,0}S^{0,0}$, whose common value we denote by $\widetilde{f}_{i,j}$. 
The attaching map of the $(i,j)$-cell, is given by
\[
\widetilde{f}_{i,j} = \left\{
\begin{array}{ll}
1-\varepsilon & i \equiv 1, j \equiv 1 \pmod{2} \\
2  & i \equiv 1, j \equiv 0 \pmod{2} \\
1+\varepsilon & i \equiv 0, j \equiv 1 \pmod{2} \\
0 & i \equiv 0, j \equiv 0 \pmod{2} \\
\end{array}
\right\}
\]
\end{prop}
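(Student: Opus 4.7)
The plan is to identify each attaching map's stable class in $\pi_{0,0}(S^{0,0})$ by computing its $\uu$- and $\ff$-degrees and then invoking Bredon's theorem (\Cref{thm: pi_00}), under which any class $a + b\varepsilon$ is determined by the pair $(\uu, \ff)(a + b\varepsilon) = (a - b, a + b)$.

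I would begin by exhibiting the two $Rep(C_2)$-CW structures on $\RP^{p,q}$. Splitting off a trivial coordinate via $\R^{p+1,q} = \R^{p,q} \oplus \R^{1,0}$ gives the equivariant closed embedding $\RP^{p-1,q} \hookrightarrow \RP^{p,q}$, whose open complement is the affine chart equivariantly homeomorphic to $\R^{p,q}$; this presents $\RP^{p,q}$ as $\RP^{p-1,q} \cup_{f^1} D^{p,q}$ with $f^1\colon S^{p-1,q} \to \RP^{p-1,q}$ the equivariant double cover. Applying the same construction to the re-indexing $\RP^{p,q} = \RP^{p,p-q+1}$ provided by the preceding lemma yields the second structure $\RP^{p,q} = \RP^{p-1,q-1} \cup_{f^2} D^{p,p-q+1}$, with $f^2\colon S^{p-1,p-q+1} \to \RP^{p-1,q-1}$.

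Next I would extract degrees. For each structure, compose the attaching map with the natural collapse onto the top cell of its codomain (iterating the same structure on the subspace) to produce a $C_2$-equivariant self-map of a representation sphere. On underlying spaces both reduce to the classical composite $S^{p-1} \to \RP^{p-1} \to \RP^{p-1}/\RP^{p-2} = S^{p-1}$, of degree $1 + (-1)^p$. On fixed points, the decomposition $\ff(\RP^{p,q}) = \RP^{q-1} \sqcup \RP^{p-q}$ together with $\ff(D^{p,q}) = D^{p-q}$ for structure 1 and $\ff(D^{p,p-q+1}) = D^{q-1}$ for structure 2 locates the equivariant cell in exactly one component of the fixed set, and on that component the map is again a classical double cover whose top-cell collapse has integer degree $1 + (-1)^{p-q}$ or $1 + (-1)^{q-1}$ respectively. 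Writing $(i,j)$ for the subscript of the attaching sphere, both presentations yield the common expressions $\uu(\tilde f_{i,j}) = 1 + (-1)^{i+1}$ and $\ff(\tilde f_{i,j}) = 1 + (-1)^{i-j+1}$, where for structure 2 one uses the identity $q - 1 = (p-1) - (p - q + 1) + 1 = i - j + 1$.

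Finally I would invert the system $a - b = 1 + (-1)^{i+1}$, $a + b = 1 + (-1)^{i-j+1}$ by parity casework on $(i,j)$: the four combinations give $(\uu, \ff) = (2,0),\, (2,2),\, (0,2),\, (0,0)$, hence $(a,b) = (1,-1),\, (2,0),\, (1,1),\, (0,0)$, matching the four tabulated values $1 - \varepsilon,\ 2,\ 1 + \varepsilon,\ 0$. The main obstacle is the fixed-point verification: one must check that the equivariant cell attaches to the correct component of $\ff(\RP^{p-1,q})$ (respectively $\ff(\RP^{p-1,q-1})$) and that the restricted map is genuinely the classical double cover of a real projective space, with the correct basepoint behavior dictated by the cellular skeleton. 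The passage from these unstable attaching maps to elements of $\pi_{0,0}(S^{0,0})$ is handled by the remark preceding the proposition: geometric fixed points preserve the relevant cofiber sequences and, together with $\uu$, jointly detect $\pi_{0,0}$, so the unstable degree calculations suffice to pin down the stable class.
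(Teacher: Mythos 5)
Your proposal is correct and follows essentially the same route as the paper's proof: establish the two cell structures, compute the $\uu$- and $\ff$-degrees of the top-cell projections of the attaching maps (obtaining $1-(-1)^i$ and $1-(-1)^{i+j}$), and solve for the coefficients of $1$ and $\varepsilon$ via Bredon's computation of $\pi_{0,0}(S^{0,0})$. The only cosmetic difference is that you identify the cofiber of $\RP^{p-1,q}\hra\RP^{p,q}$ directly from the affine-chart complement, whereas the paper deduces it by applying $\uu$ and $\ff$ to the cofiber sequence; both yield $S^{p,q}$ and the degree bookkeeping is identical.
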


\begin{proof}
This result is proved by using $\uu$ and $\ff$ to first determine the cofibers of the inclusions and then computing the attaching maps using \Cref{thm: pi_00} and the fact that taking geometric fixed points commutes with taking suspension spectra for based $G$-cell complexes.
Also, we can assume that $q\leq \frac{p+1}{2}$.
If not, we work with the equivalent space $\RP^{p,p-q+1}$ which has fewer nontrivial $C_2$-cells.

Now we take the first case, attaching a cell to $\RP^{p-1,q}$ along a map we call $f^1$ to obtain $\RP^{p,q}$.
We fix $\RP^{p-1,q}$ inside $\RP^{p,q}$ as the last $p+q$ homogeneous coordinates to insure that the equivariant structure is correct.
As mentioned above, we use the fact that both $\uu$ and $\ff$ preserve cofiber sequences to help us compute the cofiber  
\[
C:=Cof(\RP^{p-1,q} \hra \RP^{p,q}).
\]
We see easily that $\uu(C)=S^p$.
The inclusion 
\[
\RP^{p-1,q} \hra \RP^{p,q}
\]
 induces
\[
\RP^{q-1} \coprod \RP^{p-q-1} \hra \RP^{q-1} \coprod \RP^{p-q}
\]
upon taking fixed points and so $\ff(C)=S^{p-q}$.
This implies that $C=S^{p,q}$, therefore $\RP^{p,q}$ is the result of attaching a cell along a map 
\[
f^1_{p-1,q}:S^{p-1,q} \to \RP^{p-1,q}.
\]

Now we turn to determining the projection of this map onto the top cell.
This is given by the composition
\[
\widetilde{f}^1_{p-1,q}: S^{p-1,q} \hra \RP^{p-1,q} \lra  S^{p-1,q}.
\]
We determine which element of $\pi_{0,0}(S^{0,0})$ we have by computing the degree of the map on the underlying space and on fixed points.
These two values uniquely determine which map we have up to equivariant homotopy. 
The map 
\[
\uu(\widetilde{f}^1_{p-1,q}): S^{p-1} \to \RP^{p-1} \to S^{p-1}
\]
has $deg(\uu(\widetilde{f}^1_{p-1,q}))=1+(-1)^p=1-(-1)^{p-1}$.
Now, 
\[
\ff(\widetilde{f}^1_{p-1,q}): S^{p-q-1} \to \RP^{q-1} \coprod \RP^{p-q-1} \to S^{p-q-1}
\]
lands in the second term of the coproduct.
This implies that $deg(\ff(\widetilde{f}^1_{p-1,q}))=1+(-1)^{p-q}=1-(-1)^{p-1+q}$.
Therefore we have the formula
\[
deg(\uu(\widetilde{f}^1_{i,j}),\ff(\widetilde{f}^1_{i,j}))=(1-(-1)^i,1-(-1)^{i+j}).
\]

We are also interested in obtaining $\RP^{p,q}$ from $\RP^{p-1,q-1}$.
Consider the underlying space and the fixed points of 
\[
C:=Cof(\RP^{p-1,q-1} \hra \RP^{p,q}).
\]
We have that $\uu(C)=S^p$.
The map on fixed points induced by the inclusion is given by 
\[
\RP^{q-2} \coprod \RP^{p-q} \hra \RP^{q-1} \coprod \RP^{p-q}
\]
and so $\ff(C)=S^{q-1}$.
Therefore, $C=S^{p,p-q+1}$ and so we can realize $\RP^{p,q}$ as the cofiber of a map 
\[
f^2_{p-1,p-q+1}:S^{p-1,p-q+1} \to \RP^{p-1,p-q+1} \iso \RP^{p-1,q-1}.
\]

Consider the composition 
\[
\widetilde{f}^2_{p-1,p-q+1}: S^{p-1,p-q+1} \hra \RP^{p-1,p-q+1} \lra S^{p-1,p-q+1}.
\]
We see that 
\[
\uu(\wt{f}^2_{p-1,p-q+1}): S^{p-1} \to \RP^{p-1} \to S^{p-1}
\]
has $deg(\uu(\wt{f}^2_{p-1,p-q+1}))=1+(-1)^p=1-(-1)^{p-1}$.
Also, 
\[
\ff(\wt{f}^2_{p-1,p-q+1}): S^{q-2} \to \RP^{q-2} \coprod \RP^{p-q} \to S^{q-2}
\]
factors through $\RP^{q-2}$ for the same reasons as above.
Therefore, $deg(\ff(\wt{f}^2_{p-1,p-q+1}))=1+(-1)^{q-1}=1-(-1)^{(p-1)+(p-q+1)}$.
Thus we arrive at the formula
\[
deg(\uu(\wt{f}^2_{i,j}),\ff(\wt{f}^2_{i,j}))=(1-(-1)^i,1-(-1)^{i+j}).
\]

We obtain $\wt{f}$ by computing the degree of $\uu(f)$ and $\ff(f)$ to express it as a linear combination of $1$ and $\varepsilon$.
\end{proof}

It is possible to obtain $\RP^{p,q}$ in different ways by attaching different equivariant cells.
The above result will give us what is necessary to construct the real projective spaces via a preferred sequence of cell attachments.
Any model for the cellular tower of $\RP^{p,q}$ will give rise to a spectral sequence which computes the $RO(C_2)$-graded (co)homology of $(\RP^{p,q})$, see \cite{KronholmFree}.

\begin{cor}
Both the $RO(C_2)$-graded homology and cohomology of $\RP^{p,q}$ with coefficients in $\eF_2$ are free $\M_2$-modules.
\end{cor}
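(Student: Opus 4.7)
The plan is to realize $\RP^{p,q}$ as a finite $Rep(C_2)$-cell complex and then invoke Kronholm's freeness theorem (recalled in \Cref{subsec:recollections equivariant coh}) which guarantees that the $RO(C_2)$-graded (co)homology of any such complex with $\eF_2$ coefficients is free over $\M_2$.

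First I would use \Cref{prop:attaching maps} iteratively. Starting from the base case $\RP^{0,0} = \ast$ (or equivalently some $\RP^{n,0}$ whose underlying cell structure is already known to be built from representation cells), each step of the induction expresses $\RP^{p,q}$ as either $\RP^{p-1,q}\cup_{f^1} CS^{p-1,q}$ or $\RP^{p-1,q-1}\cup_{f^2} CS^{p-1,p-q+1}$. Either way, the passage from a smaller equivariant projective space to a larger one is accomplished by attaching a single representation cell along a representation sphere. Composing all of these attachments displays $\RP^{p,q}$ as a finite $Rep(C_2)$-cell complex.

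Second, since every finite $Rep(C_2)$-cell complex has free $RO(C_2)$-graded (co)homology over $\M_2$ by Kronholm's theorem, applying this to the cell structure above gives the conclusion. Alternatively, and more concretely, the cellular spectral sequence associated to this $Rep(C_2)$-cell filtration has $E_1$-term a free $\M_2$-module on the representation cells; Kronholm's theorem (or equivalently the corresponding vanishing of higher $\mathrm{Tor}$ and $\mathrm{Ext}$ groups over $\M_2$ for this particular cell structure) forces the spectral sequence to collapse at $E_1$ with no extension problems, leaving the (co)homology free on the same generators.

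No extra computational input is needed from the explicit values of $\widetilde f_{i,j}$; all that matters for this corollary is that the attaching maps land on representation spheres, which is exactly what \Cref{prop:attaching maps} provides. The only place where one should be mildly careful is to confirm that each intermediate space $\RP^{i,j}$ built along the way really is of the form $\RP^{i,j}$ so that the next step of the induction applies; this is immediate from the way we have indexed the cell attachments.
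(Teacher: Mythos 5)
Your primary argument --- realize $\RP^{p,q}$ as a finite $Rep(C_2)$-cell complex via \Cref{prop:attaching maps} and then cite Kronholm's freeness theorem --- is correct and does establish the corollary as stated, but it takes a genuinely different route from the paper's. The paper does not merely quote Kronholm; it chooses a \emph{preferred} ordering of cell attachments (assuming WLOG $q < \frac{p+1}{2}$, alternating trivial and twisted representation cells until all $q$ twisted cells are in place, then attaching only trivial cells) and then argues directly, using the explicit values of $\widetilde{f}_{i,j}$ from \Cref{prop:attaching maps} together with degree considerations, that every differential in the resulting cellular spectral sequence vanishes. The remark following the corollary explains why: the filtration is arranged so that no differential can land in the negative cone of a suspension of $\M_2$. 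This yields an explicit free basis in specified bidegrees, not just abstract freeness, which is the useful output. Your approach buys brevity and generality (any cell structure will do) at the cost of that extra information.

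Your ``alternatively, and more concretely'' paragraph, however, contains a real error. Kronholm's freeness theorem does \emph{not} imply that the cellular spectral sequence of an arbitrary $Rep(C_2)$-cell filtration collapses at $E_1$, nor that the (co)homology is free on the same generators as the $E_1$-term. Kronholm's own shifting phenomenon is precisely the situation where nontrivial differentials occur and the free generators of the abutment end up in different bidegrees from the cells; freeness of the answer is perfectly compatible with nonzero differentials whose images split off as free summands. So one cannot deduce collapse from freeness --- collapse must be verified directly, which is what the paper's explicit inspection of the attaching maps accomplishes. Correspondingly, your claim that ``no extra computational input is needed from the explicit values of $\widetilde f_{i,j}$'' is accurate for the bare freeness statement but false if one wants the collapse of the cellular spectral sequence, which is what the paper's proof actually establishes and then uses.
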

Our particular preferred model, alluded to in the above proposition, will insure that all of the differentials in the cellular spectral sequence are trivial.

\begin{proof}
Without loss of generality we can assume that $q<\frac{p+1}{2}$ and $q\neq 0$.
Now we alternate between attaching trivial and nontrivial $C_2$-representation cells until we have attached $q$-twisted cells.
The projections of the attaching maps, which induce the differentials in the cellular spectral sequence of Kronholm, are as described in \Cref{prop:attaching maps}.
Once we have attached $q$ twisted cells we attach continue attaching trivial cells.
All of the differentials in this spectral sequence are $0$.
The differentials are all $\M_2$-module maps.
They either have to be zero because of degree reasons, when we attach a twisted cell there is nothing to hit, or they are maps that have trivial Hurewicz image since $\varepsilon\pi_{0,0}S^{0,0}$ is sent to $1\in\M_2$.
\end{proof}

The real point of the filtration in the above proof is making sure that we attach cells in such a way that there can never be a differential hitting something in the negative cone of a suspension of $\M_2$, that would make things more complicated.

\section{Motivic results}
\label{sec:motivic}
In this section we will outline how the above results can be extended to the motivic setting.
We will establish that all of the ingredients discussed in \Cref{sec:outline} hold in the motivic setting.
In fact, some of the above equivariant results are necessary for our motivic results.
In particular, the results of \Cref{sec:rppqs} are used directly to describe the attaching maps of motivic extended powers of spheres.
We realize that in light of recent work of Bachmann and Hoyois, see \cite{BachmannHoyoisNorms}, this statement is potentially ambiguous and so we will clarify that we only mean naive extended powers.
Specifically, the model of the total spaces $E\Sigma_n$ that we use are all simplicial.

Before extending the above results, we will recall a few basic facts about motivic homotopy theory over $Spec(\R)$.
As in the equivariant setting, that motivic cohomology satisfies the algebraic condition follows from a computation, due to Voevodsky in \cite{VoevodskyMotivic11} and \cite{VoevodskyEM12}.
Similarly, the geometric condition follows from model categorical properties of our chosen model of motivic homotopy theory, see \cite{PaninPimenovRoendigsmodel} for specifics about the model structure we consider.
After this we will explain how the results of \Cref{sec:Thom} can be sidestepped using the equivariant case and Betti realization.
Finally, the homotopical condition and $\Hi$-structures in the motivic setting are discussed.

We have chosen to present the material in this fashion in order that the proof in the next section, for both cases, has all of the necessary results established prior to it.

\subsection{Recollections from motivic homotopy theory over $\R$.}
Here we recall some basic facts about motivic homotopy theory over $Spec(\R)$.
There are many good introductions to this topic, such as \cite{MotivicIntroBook} or \cite{DuggerIsaksenMotivicCell}, and so we will be brief.
Recent work of Heller and Ormsby, see \cite{HellerOrmsby}, relating $C_2$-equivariant homotopy theory and motivic homotopy theory over $Spec(\R)$ will be useful to us.
However, their main theorem can not be used to produce our desired results since the motivic bigraded spheres are not in the image of their section of Betti realization.
When necessary, we will distinguish the motivic spheres from the equivariant ones by use of the subscripts $_{\R}$ and $_{C_2}$ respectively.

Given a scheme $X$ defined over $k$ and an embedding of $k$ into $\C$ one can obtain a $Gal(\C/k)$-equivariant topological space by considering the complex points of $X$ base changed to $\C$.
This extends to the realization functors of motivic homotopy theory which we will make crucial use of.
The Betti realization functor
\[
Be:SH(\R) \lra SH(C_2)
\]
from the motivic stable homotopy category over $Spec(\R)$ to the $C_2$-equivariant stable homotopy category takes $S^{p,q}_{\R}$ to $S^{p,q}_{C_2}$ and $\HFR_2$ to $\eHF_2$.
Further, $Re$ is monoidal and takes motivic cofiber sequences to equivariant cofiber sequences.
The most complete reference for this is the work of Heller and Ormsby, see Section 4 of \cite{HellerOrmsby}.
Heller and Ormsby work with the closed flasque model structure, as developed in \cite{PaninPimenovRoendigsmodel}.
Their main reasons for choosing this model structure is that all of the standard motivic spheres are cofibrant, various base change functors as well as the Betti realizations are left Quillen functors.
They establish the following Quillen adjunction.

\begin{prop}[Heller-Ormsby, Proposition 4.8 of \cite{HellerOrmsby}]
The functors
\[
Re^{C_2}_{B}: Spt^{\Sigma}_{\mathbb{P}^1}(\R) \leftrightarrows Spt^{\Sigma}_{S^{2,1}}(C_2): Sing_B^{C_2}
\]
form a Quillen adjoint pair and $Re^{C_2}_{B}$ is strong symmetric monoidal.
\end{prop}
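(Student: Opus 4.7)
The plan is to establish this adjunction in three stages -- unstable, stable, and symmetric monoidal -- building up the realization by left Kan extension from representables. First I would define $Re^{C_2}_B$ on representables by sending a smooth $\R$-scheme $X$ to the $C_2$-space $X(\C)$ equipped with the complex conjugation action, and then left Kan extend to pointed simplicial presheaves on smooth schemes over $\R$. The evident right adjoint is $Sing_B^{C_2}(K)(X) := \mathrm{Map}_{C_2}(X(\C), K)$. That this pair is a Quillen adjunction with respect to the projective or closed flasque model structures on pointed simplicial presheaves is essentially automatic: on representables the left adjoint takes generating (trivial) cofibrations to (trivial) cofibrations of $C_2$-spaces, which is all that the cellular/flasque generation needs.

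Next I would verify that the adjunction descends through the two Bousfield localizations that define the motivic model structure: Nisnevich descent and $\A^1$-invariance. For Nisnevich descent, a Nisnevich distinguished square of smooth $\R$-schemes realizes to a pushout square of $C_2$-spaces along an equivariant open inclusion, and this is a homotopy pushout because open inclusions of $C_2$-spaces are cofibrations. For $\A^1$-invariance, $\A^1_\R(\C) = \C$ is $C_2$-equivariantly contractible through the straight-line homotopy onto the real line, so projection off an $\A^1$-factor Betti realizes to an equivariant weak equivalence. Choosing the closed flasque model structure of \cite{PaninPimenovRoendigsmodel} is important here because it makes the standard motivic spheres cofibrant and makes the level-wise lifting checks straightforward. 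For the stabilization step I would use the identification $Re^{C_2}_B(\mathbb{P}^1_\R) \we S^{2,1}_{C_2}$ (the two-sphere with complex-conjugation action, equivalent to the one-point compactification of $\R^{2,1}$), so that $\mathbb{P}^1$-suspension on the motivic side matches $S^{2,1}$-suspension on the equivariant side; the level-wise Quillen adjunction then prolongs to symmetric spectra by general nonsense about symmetric spectra with a chosen cofibrant object.

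The step I expect to be the main obstacle is strong symmetric monoidality. A left Kan extension of a symmetric monoidal functor along a monoidal embedding is generically only \emph{oplax} symmetric monoidal, so one needs a genuine argument. Here the comparison map $Re^{C_2}_B(X) \wedge Re^{C_2}_B(Y) \to Re^{C_2}_B(X \wedge Y)$ is a literal isomorphism on representables because $(X \times Y)(\C) = X(\C) \times Y(\C)$ as $C_2$-spaces, and Day convolution then propagates this to pointed simplicial presheaves provided both factors are cofibrant -- which is precisely why the flasque model structure is used. Promoting to symmetric $\mathbb{P}^1$-spectra requires checking that the equivalences $Re^{C_2}_B((\mathbb{P}^1_\R)^{\wedge n}) \we (S^{2,1}_{C_2})^{\wedge n}$ are coherent with the $\Sigma_n$-actions by cyclic permutation and transpositions; this is where one most feels the choice of model, but it is manageable because the identification is induced by a single equivariant homeomorphism at level one. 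With these pieces assembled, both the Quillen adjunction and the strong symmetric monoidal structure follow.
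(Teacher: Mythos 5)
This proposition is stated in the paper as a quoted result of Heller and Ormsby (their Proposition~4.8), and the paper offers no proof of it; it is used as a black box. So there is no ``paper's own proof'' to compare against. Your sketch is a plausible reconstruction of the Heller--Ormsby argument in outline: build the realization by left Kan extension from representables $X\mapsto X(\C)$ with the conjugation action, check the unstable Quillen adjunction in the closed flasque model structure, pass through the Nisnevich and $\A^1$ localizations, identify $Re^{C_2}_B(\mathbb{P}^1_{\R})$ with $S^{2,1}_{C_2}$ to stabilize, and address monoidality. That is indeed the shape of their proof, and they also cite Dugger--Isaksen's work on $\A^1$-realizations for the descent steps.

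Two places in your sketch are looser than the actual argument and deserve flagging. First, the Nisnevich step: a Nisnevich distinguished square does \emph{not} Betti-realize to a literal pushout along an open inclusion; the étale leg realizes to a local homeomorphism, not an open embedding, and the realization is only a \emph{homotopy} pushout by an excision argument using that the complement of the open piece maps isomorphically. ``Pushout along a cofibration'' is not the right justification, and in fact Heller--Ormsby lean on the hypercover-preservation results of Dugger--Isaksen for exactly this point. Second, on strong monoidality you have the logic backwards: the left Kan extension of a strong monoidal functor into a cocomplete category with cocontinuous tensor \emph{is} strong monoidal at the point-set level (this is Day's theorem), so no cofibrancy is needed for the literal isomorphism $Re^{C_2}_B(X)\wedge Re^{C_2}_B(Y)\cong Re^{C_2}_B(X\wedge Y)$; cofibrancy enters when you want the \emph{derived} functor on the homotopy category to be monoidal and when you prolong to symmetric spectra, which is the step your last paragraph correctly emphasizes. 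With those two corrections your outline is a fair account of the cited result.
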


We apologize for the mix of their notation with ours.
The fact that $Re^{C_2}_{B}$ is strong symmetric monoidal implies that $Re^{C_2}_B(S^{p,q}_{\R})=S^{p,q}_{C_2}$.
Heller and Ormsby also prove the following important result.

\begin{thm}[Heller-Ormsby, Theorem 4.17 of \cite{HellerOrmsby}]
There is an isomorphism
\[
\mathbb{L}Re^{C_2}_B(\HH A^{\R})\iso \HH \underline{A}
\]
in the $C_2$-equivariant stable homotopy category for any abelian group $A$.
\end{thm}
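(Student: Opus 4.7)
The plan is to characterize $\HH\underline{A}$ by its Mackey functor homotopy groups and verify that $\mathbb{L}Re^{C_2}_B(\HH A^{\R})$ meets that characterization. An equivariant Eilenberg--MacLane spectrum for the constant Mackey functor $\underline{A}$ is pinned down up to weak equivalence by having $\pi^H_n$ equal to $A$ when $n=0$ (with identity restriction and multiplication-by-two transfer) and vanishing in every other integer degree, for $H\in\{e,C_2\}$. The strategy will be to pick a convenient cofibrant model of $\HH A^{\R}$, push it through $Re^{C_2}_B$, and verify this characterization on the resulting $C_2$-spectrum.

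First I would fix an explicit cofibrant model for $\HH A^{\R}$ in the closed flasque model structure on $Spt^{\Sigma}_{\mathbb{P}^1}(\R)$ built out of the Suslin--Voevodsky presheaves $A\otimes\Z_{tr}(\G_m^{\wedge q})$ assembled into a symmetric spectrum in the standard way. This presentation is advantageous because each level is a simplicial presheaf made from free chain constructions that behave well under realization. Because $Re^{C_2}_B$ is left Quillen and strong symmetric monoidal by the Heller--Ormsby proposition quoted above, applying it level-by-level commutes with both the symmetric spectrum assembly and the simplicial bar construction, and I expect the output to be a $C_2$-orthogonal spectrum whose $2q$-th level is a $C_2$-equivariant model of $K(\underline{A},\rho_q)$, where $\rho_q$ is the $q$-fold regular representation.

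Next comes the key step: comparing homotopy Mackey functors. For the underlying spectrum ($H=e$), the computation reduces to the well-known fact that complex Betti realization sends $\HH A^{\C}$ to the classical spectrum $\HH A$, which one establishes by checking it on the representing simplicial presheaves after base change $\R\to\C$ and composition with the functor forgetting the $C_2$-action. For $H=C_2$ I would compute $\pi^{C_2}_n$ via geometric fixed points $\Phi^{C_2}$ of the realized spectrum; the explicit simplicial model lets me identify $\Phi^{C_2}$ of the realization with the topological realization of the simplicial set of $\R$-points $q\mapsto (A\otimes\Z_{tr}(\G_m^{\wedge q}))(\R)$, and a direct computation pins this down as $A$ in degree zero and zero elsewhere. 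The main obstacle will be recognizing the transfer in the resulting Mackey functor as the multiplication-by-two map characterizing $\underline{A}$, rather than the zero map that would appear for the Borel completion; this demands careful bookkeeping of the Galois action on the cofibrant replacement and is essentially what the Heller--Ormsby choice of model structure is engineered to deliver. Once these Mackey functor identifications are in hand, the equivalence $\mathbb{L}Re^{C_2}_B(\HH A^{\R})\we\HH\underline{A}$ follows either from Postnikov tower uniqueness for connective $\HH\underline{A}$-modules, or by constructing the comparison map from a representative of $\pi^{C_2}_0$ and applying the five lemma against the isotropy cofiber sequence $C_{2+}\to S^0\to S^{1,1}$ to promote it to a genuine equivariant equivalence.
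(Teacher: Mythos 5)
The paper does not prove this statement; it is quoted verbatim as Theorem 4.17 of Heller--Ormsby and used as a black box, so there is no paper proof to compare against. Judging the proposal on its own merits, the overall strategy (characterize $\HH\underline{A}$ by its homotopy Mackey functors and verify this for the realization) is a reasonable shape for such an argument, but the central computation contains a real error.

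You propose to compute $\pi^{C_2}_n$ of the realization via geometric fixed points, and you claim that a direct computation identifies $\Phi^{C_2}$ of the realization as ``$A$ in degree zero and zero elsewhere.'' This cannot be right, because $\Phi^{C_2}\HH\underline{A}$ is not concentrated in degree zero. For $A = \F_2$, the isotropy separation sequence $(\HH\F_2)_{hC_2} \to \HH\F_2 \to \Phi^{C_2}\HH\underline{\F}_2$ has zero first map (the transfer is multiplication by $2 = 0$), so $\pi_n\Phi^{C_2}\HH\underline{\F}_2 \iso \F_2$ for all $n\geq 0$; this is of course the $\rho$-inverted version of the $RO(C_2)$-graded coefficient ring $\M_2$ recorded in Proposition~\ref{prop:Hu Kriz}. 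For $A=\Z$ one similarly gets $\Z/2$ in each even nonnegative degree. A sanity check against your own simplicial model already flags this: the contribution in filtration $q=1$ is $\Omega|\RP^\infty| \we \Z/2$ (not $\Z$), so the stabilized $\pi_0$ of the fixed-point realization is $\Z/2$, not $A$. So if your computation genuinely produced $A$ in degree zero only, you would have proved the realization is \emph{not} $\HH\underline{A}$.

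There is a second, related conflation: $\pi^{C_2}_n$ is the homotopy of the genuine (categorical) fixed-point spectrum, which is not the same as $\pi_n\Phi^{C_2}$. Passing between the two requires the isotropy separation sequence, in which the homotopy-orbit term $(\HH A)_{hC_2}$ is nontrivial. You do invoke the cofiber sequence $C_{2+}\to S^0\to S^{1,1}$ at the end, but only to promote an already-constructed map to an equivalence; in the step where you actually compute the fixed-point homotopy you replace it by $\Phi^{C_2}$ without justification. A corrected argument along your lines would compute $\pi^e_*$ (as you sketch via complex realization), compute $\Phi^{C_2}$ of the realization and find it to be the $\rho$-inverted polynomial coefficient ring rather than $\HH A$, and then use these together with the gluing data of the isotropy separation square --- or equivalently build a comparison map from $\pi_0^{C_2}$ and check it separately on underlying spectra and on geometric fixed points --- to conclude.
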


Since both spectra are cellular, this implies in particular that $Re^{C_2}_B(\HFR_2)\we\eHF_2$.
Note that the cellularity of $\HFR_2$ is a nontrivial result which follows from the work of Hoyois in \cite{HoyoisHopkinsMorel}.
Therefore, the Betti realization of $\HFR_2$ is cellular as well and the isomorphism in the homotopy category lifts to an honest equivalence.
We will only be working with cellular motivic spectra.
Therefore, by Proposition 7.1 of \cite{DuggerIsaksenMotivicCell} weak equivalences are determined by bigraded homotopy groups.
This was key in our equivariant work.

Thus we will use Betti realization to compute the attaching maps of motivic extended powers of spheres from the equivariant attaching maps and the fact that $Re$ induces an isomorphism $\pi_{0,0}S^{0,0}_{\R}\iso \pi_{0,0}S^{0,0}_{C_2}$.
See \Cref{subsec: motivic extended powers of spheres} for details.

\subsection{The Algebraic condition in the motivic setting}
\label{subsec: motivic algebraic condition}
The following facts, as in \Cref{sec:power ops}, are sufficient for establishing that motivic cohomology with coefficients in $\F_2$ satisfy the algebraic condition.

\begin{thm}[Voevodsky \cite{VoevodskyMotivicEM}]
\label{thm: motivic steenrod}
The dual motivic Steenrod algebra over $Spec{\R}$ is given by
\[
\cA^{\R}_*:=\pi_{*,*}(\HFR_2\wedge \HFR_2)\iso \pi_{*,*}(\HFR_2)[\tau_0,\tau_1,\ldots, \xi_1,\xi_2,\ldots]/(\tau_k^2=\tau \xi_{k+1}+\rho\tau_{k+1}+\rho\tau_0\xi_{k+1})
\]
where we recall that $\pi_{*,*}(\HFR_2)\iso \F_2[\tau,\rho]$, which we sometimes denote by $\M_2^{\R}$.
\end{thm}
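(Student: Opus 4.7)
The plan is to follow Voevodsky's strategy for computing the motivic Steenrod algebra and then dualize. First I would recall Voevodsky's computation of the motivic cohomology of $B\mu_2$ over $\mathrm{Spec}(\R)$: the ring $H^{*,*}(B\mu_2;\F_2)$ is a quotient of $\M_2^{\R}[u,v]$ with $|u|=(1,1)$, $|v|=(2,1)$, subject to the relation $u^2=\tau v+\rho u$. The presence of the $\rho u$ term is exactly the real-motivic correction that will propagate into all of the subsequent formulas. I would then use Voevodsky's identification of the motivic Steenrod algebra $\cA^{\R}$ as the algebra of bistable cohomology operations, together with his computation of $Sq^i(u)$ and $Sq^i(v)$ inductively, to write down generators and the basic left-action on $H^{*,*}(B\mu_2;\F_2)$.

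Next I would compute $\cA^{\R}_*$ as the $\M_2^{\R}$-linear dual. The strategy is the standard Milnor one: define $\xi_i\in \cA^{\R}_*$ in bidegree $(2(2^i-1),2^i-1)$ and $\tau_i$ in bidegree $(2^{i+1}-1,2^i-1)$ dual to an explicit admissible monomial basis of the Steenrod algebra in the appropriate degrees, coming from the Milnor primitives. Freeness of $\HFR_2\wedge \HFR_2$ over $\HFR_2$ is guaranteed by the fact that the motivic cohomology of $B\mu_2$ is free, so the $\xi_i,\tau_i$ will generate a polynomial/quadratic algebra with no further $\M_2^{\R}$-relations beyond those arising from the squares of the $\tau_i$.

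The one genuinely delicate step is pinning down the relation $\tau_k^2=\tau\xi_{k+1}+\rho\tau_{k+1}+\rho\tau_0\xi_{k+1}$. The approach I would take is to compute $\tau_k^2$ by evaluating the right $\M_2^{\R}$-linear dual pairing against the Milnor basis and using the coproduct/antipode identities forced by Hopf-algebroid structure. One checks that over $\mathrm{Spec}(\C)$ the class $\rho$ vanishes and one recovers Voevodsky's formula $\tau_k^2=\tau\xi_{k+1}$; the $\rho$-corrections are then uniquely determined by degree considerations, the $\M_2^{\R}$-module structure, and the requirement that base change $\mathrm{Spec}(\C)\to\mathrm{Spec}(\R)$ and Betti realization $Re_B^{C_2}$ carry the relation compatibly. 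As a consistency check (and potentially as an alternative route), I would compare with the Hu--Kriz presentation in Theorem 2.5 via Betti realization: substituting $\alpha=\tau_0\rho-\tau$ into $\tau_i^2=\tau_{i+1}\rho+\xi_{i+1}\alpha$ and working modulo $2$ yields exactly $\tau\xi_{i+1}+\rho\tau_{i+1}+\rho\tau_0\xi_{i+1}$, matching the claim.

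The main obstacle is the verification of the quadratic relation: everything else (generators, bidegrees, freeness over $\M_2^{\R}$) is determined formally by the Milnor computation of the motivic Steenrod algebra, but the precise coefficient of $\rho\tau_0\xi_{k+1}$ requires a genuine calculation involving the failure of strict graded-commutativity in the motivic setting. I would rely on Voevodsky's explicit identification of the motivic Bockstein and the Milnor primitives to pin this term down, rather than attempt a first-principles derivation.
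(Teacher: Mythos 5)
The paper does not prove this statement: it is stated as a theorem of Voevodsky with a citation, and the only follow-up remark in the text is that the Hopf algebroid structure maps may be found in Dugger--Isaksen and that they imply freeness (hence flatness) of $\cA^{\R}_*$ over $\M_2^{\R}$. So there is no ``paper's own proof'' to compare against; what you have written is a sketch of how one \emph{would} prove Voevodsky's theorem, not a reconstruction of an argument in the paper.

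On its own merits the sketch is a reasonable account of the actual route: compute $H^{*,*}(B\mu_2;\F_2)$ over $\R$ (with the characteristic relation $u^2=\tau v+\rho u$), analyze the bistable operations via their action on $u$ and $v$, and dualize à la Milnor. Your consistency check against Hu--Kriz is correct as far as it goes: substituting $\alpha=\tau_0\rho+\tau$ into $\tau_i^2=\tau_{i+1}\rho+\xi_{i+1}\alpha$ does give $\tau\xi_{i+1}+\rho\tau_{i+1}+\rho\tau_0\xi_{i+1}$ mod $2$, and Betti realization $Re_B^{C_2}$ is injective on the polynomial part of $\M_2^{\R}$ so this check is meaningful. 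Two cautions, though. First, ``degree considerations plus compatibility with $\Spec(\C)\to\Spec(\R)$'' do \emph{not} uniquely determine the $\rho$-linear correction terms: base change to $\C$ kills $\rho$, so it provides no constraint on them, and there is more than one bidegree-$(2^{i+2}-2,2^{i+1}-2)$ monomial in $\rho$, $\tau_0,\ldots,\tau_{i+1}$, $\xi_1,\ldots,\xi_{i+1}$. You do correctly flag that this is the genuinely delicate step requiring an explicit calculation with the motivic Bockstein and the Milnor primitives. Second, be careful not to present the Hu--Kriz comparison as a proof: logically the two computations are independent (and in Hu--Kriz's presentation $\M_2$ includes the negative cone, which does not appear motivically), so Betti realization gives a consistency check, not a derivation. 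If you want to flesh this into a full argument you should cite the actual source of the relation --- Voevodsky's ``Reduced power operations in motivic cohomology'' and ``On motivic cohomology with $\Z/\ell$-coefficients'' --- rather than leave the key coefficient as a deduction from compatibility constraints.
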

The Hopf algebra structure maps can be found in Dugger-Isaksen \cite{DuggerIsaksenLowMWstems} and implies that the dual Steenrod algebra is free over the cohomology of a point and hence flat, as in the equivariant case.
Thus the methods employed in \Cref{sec:power ops} can be applied directly to show that the algebraic condition is satisfied in the motivic setting.
This is also relevant for the results in \Cref{sec:Hinfty}.

Much has been done by Dugger and Isaksen with the affect of the Betti realization map on the relevant Adams spectral sequences.
The papers \cite{DuggerIsaksenEquivandReal} and \cite{DuggerIsaksenLowMWstems} are very accessible computational accounts of what can be done.

\subsection{Motivic extended powers of spheres}
\label{subsec: motivic extended powers of spheres}
In this subsection, we compute the attaching maps of naive extended powers of spheres in the category of motivic spectra over $Spec(\R)$.
This is possible due to the above described nice properties of Betti realization and our work in both \Cref{sec:Thom} and \Cref{sec:rppqs}.
The fact that Betti realization behaves well with respect to cofiber sequences and is monoidal is the main tool that we use, thus the work of Heller and Ormsby cited above is crucial.

\begin{defn}
The motivic naive extended power functor is defined as 
\[
D_n^{tr}(X):=E\Sigma_{n+}\wedge_{\Sigma_n}X^{\wedge n}
\]
where $E\Sigma_{n+}$ is the suspension spectrum of the motivic space that assigns the simplicial set $E\Sigma_{n+}$ to every smooth scheme over $\R$.
The functor $D_n^{tr}$ has a natural filtration induced by the skeletal filtration of $E\Sigma_{n+}$.
\end{defn}
We do not attach notation to the motivic extended powers distinguishing it from the equivariant one.
This is because in practice they will always be applied to spheres and when the distinction between the two cases is relevant the spheres will be decorated appropriately.
As the Betti realization functor of Heller and Ormsby is a symmetric monoidal left Quillen functor we have the following result.

\begin{lemma}
When applied to cellular motivic spectra, the Betti realization functor commutes with the naive extended power construction.
That is, we have a weak equivalence
\[
Re^{C_2}_B(D_n^{tr(k)}(X))\we D_n^{tr(k)}(Re^{C_2}_B(X))
\]
where the superscript $(k)$ indicates that we are only taking the $k$-skeleton of $E\Sigma_n$.
\end{lemma}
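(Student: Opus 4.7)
The plan is to deduce the equivalence from two formal properties of the Betti realization functor of Heller and Ormsby, combined with a comparison at the level of the universal space $E\Sigma_n$. First, I would exploit that $Re^{C_2}_B$ is a strong symmetric monoidal left Quillen functor. Strong monoidality gives a natural isomorphism $Re^{C_2}_B(X^{\wedge n}) \cong Re^{C_2}_B(X)^{\wedge n}$, and this isomorphism is $\Sigma_n$-equivariant because the symmetry isomorphisms are preserved. That $Re^{C_2}_B$ is left Quillen means it preserves cofibrations and weak equivalences between cofibrant objects, and in particular preserves the colimits that build the extended power from its pieces, provided the pieces are cofibrant. This is where the cellularity hypothesis on $X$ enters: for a cellular motivic spectrum $X$ the smash powers $X^{\wedge n}$ and the filtration stages of $D_n^{tr(k)}(X)$ can be taken cofibrant so the strict colimit agrees with the homotopy colimit.

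Next I would identify the Betti realization of the motivic $E\Sigma_{n+}$ with the topological $E\Sigma_{n+}$ carrying trivial $C_2$-action. Because $E\Sigma_{n+}$ is defined as the suspension spectrum of the constant presheaf on the simplicial set $E\Sigma_{n+}$, and $Re^{C_2}_B$ is defined on such constant presheaves by taking the underlying simplicial set and regarding it as a $C_2$-space with trivial action, we get $Re^{C_2}_B(E\Sigma_{n+}) \simeq E\Sigma_{n+}$ as $C_2$-spectra with trivial $C_2$-action, together with the skeletal filtration. In particular the $k$-skeleton is sent to the $k$-skeleton.

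With those two inputs in hand, the equivalence proceeds in one line from the definition of the naive extended power as a quotient of a smash product by the diagonal $\Sigma_n$-action. That is, writing $E\Sigma_n^{(k)}{}_+ \wedge_{\Sigma_n} X^{\wedge n}$ as the coequalizer of the two evident maps from $\Sigma_{n+} \wedge E\Sigma_n^{(k)}{}_+ \wedge X^{\wedge n}$, applying $Re^{C_2}_B$, and using that this functor preserves the smash products, the $\Sigma_n$-actions, and the coequalizer (again because it is a monoidal left adjoint and everything in sight is cofibrant) yields the desired equivalence with $D_n^{tr(k)}(Re^{C_2}_B(X))$.

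The main obstacle, and the one that I would expect to need the most care, is the homotopical control over the $\Sigma_n$-orbits: the naive quotient and the homotopy quotient must coincide on both sides. On the topological side this is automatic because $E\Sigma_n$ is a free $\Sigma_n$-CW object; on the motivic side, the chosen model structure (closed flasque, as in Panin--Pimenov--R\"ondigs) is what makes $E\Sigma_{n+} \wedge X^{\wedge n}$ into a $\Sigma_n$-free cofibrant object when $X$ is cellular, so that the strict quotient by $\Sigma_n$ models the homotopy orbit. Verifying this, together with checking that Betti realization carries free $\Sigma_n$-cofibrant motivic objects to free $\Sigma_n$-cofibrant $C_2$-equivariant objects, is the technical content beyond the formal manipulation above.
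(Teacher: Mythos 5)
Your proof is correct and unpacks exactly the argument the paper gestures at. The paper in fact gives no written proof of this lemma at all: it is asserted as an immediate consequence of the sentence ``As the Betti realization functor of Heller and Ormsby is a symmetric monoidal left Quillen functor we have the following result.'' Your three inputs --- strong symmetric monoidality giving $\Sigma_n$-equivariant preservation of smash powers, left adjointness giving preservation of the coequalizer that defines the orbit construction, and the identification of $Re^{C_2}_B$ on constant simplicial presheaves sending the skeletal filtration of $E\Sigma_{n+}$ to the corresponding $C_2$-trivial filtration --- are precisely the content behind that one-line citation, and your final paragraph correctly isolates where cellularity enters: it is needed so that the strict application of the left Quillen functor computes its left derived functor, i.e.\ so that the (pointwise-strict) isomorphism of extended powers is also a weak equivalence in the homotopy-invariant sense. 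One small terminological caveat: the concern is less about ``strict colimit vs.\ homotopy colimit'' internally and more about whether $Re^{C_2}_B$ applied strictly to $D_n^{tr(k)}(X)$ has the same homotopy type as $\mathbb{L}Re^{C_2}_B$; your discussion of $\Sigma_n$-freeness and cofibrancy in the closed flasque model structure is the right way to phrase why it does, and is more care than the paper itself offers.
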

At this point, the reader is owed an apology for the overloaded notation.
We know of no way to avoid this and avoid ambiguity between the two different settings.

In particular this gives us that $Re^{C_2}_B(D_2^{tr}(S^{p,q}_{\R}))\we D_2^{tr}(S^{p,q}_{C_2})$ which we happen to know the attaching maps of.
The (projections of the) attaching maps are computed as elements of $\pi_{0,0}(S^{0,0}_{C_2})\iso \Z^2$ by \Cref{prop:attaching maps}.

\begin{lemma}
\label{lemma:betti on pi_0}
The Betti realization functor $Re^{C_2}_B$ induces an isomorphism
\[
\pi_{0,0}(S^{0,0}_{\R}) \lra \pi_{0,0}(S^{0,0}_{C_2})
\]
\end{lemma}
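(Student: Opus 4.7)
The plan is to identify both source and target with $\Z^2$ under a common pair of generators, namely a unit class and a twist class, and then exploit the strong symmetric monoidality of Betti realization to conclude that generator maps to generator.

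First, I would handle the equivariant side using \Cref{thm: pi_00}: Bredon's computation gives $\pi_{0,0}(S^{0,0}_{C_2}) \iso \Z\{1\}\oplus \Z\{\varepsilon\}$, where $\varepsilon$ is (the desuspension of) the twist map $S^{1,1}_{C_2}\wedge S^{1,1}_{C_2}\to S^{1,1}_{C_2}\wedge S^{1,1}_{C_2}$; the pair $(\deg\uu,\deg\ff)=(-1,1)$ distinguishes $\varepsilon$ from $1$. Next I would handle the motivic side by invoking Morel's computation of the $0$-line: $\pi_{0,0}(S^{0,0}_{\R})\iso GW(\R)$, which is free abelian of rank $2$ with a $\Z$-basis consisting of $1=\langle 1\rangle$ and the class $\epsilon$ of the twist on $S^{1,1}_{\R}\wedge S^{1,1}_{\R}$ (equivalently, $-\langle -1\rangle$ in $K_0^{MW}(\R)$). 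In particular, the rank-and-signature homomorphism $GW(\R)\to \Z^2$ is an isomorphism and separates $1$ from $\epsilon$.

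With both sides identified as $\Z\{1\}\oplus \Z\{\text{twist}\}$, I would invoke the Heller--Ormsby adjunction (Proposition 4.8 in the excerpt): $Re^{C_2}_B$ is strong symmetric monoidal and takes $S^{1,1}_{\R}$ to $S^{1,1}_{C_2}$. Symmetric monoidality forces $Re^{C_2}_B(1)=1$ and, because the monoidal coherence includes compatibility with the symmetry isomorphisms, it carries the motivic twist on $S^{1,1}_{\R}\wedge S^{1,1}_{\R}$ to the equivariant twist on $S^{1,1}_{C_2}\wedge S^{1,1}_{C_2}$. Thus the induced map $\Z^2\to \Z^2$ sends $1\mapsto 1$ and $\epsilon\mapsto \varepsilon$, hence is an isomorphism.

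The main obstacle is ensuring the motivic identification is clean: that $\pi_{0,0}(S^{0,0}_{\R})$ really is free of rank $2$ on the pair $(1,\epsilon)$ as described, and that $\epsilon$ is indeed a generator (rather than a multiple of another class or a combination obscured by the presence of $\eta$ and $\rho$). This is precisely the content of Morel's theorem together with the standard relation $\epsilon=-\langle -1\rangle$ in $K_0^{MW}(\R)$, and the fact that rank and signature jointly detect classes in $GW(\R)$. Once this is in hand, the monoidal argument above finishes the proof with no further computation.
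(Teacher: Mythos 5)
Your proof is correct, but it takes a genuinely different route from the paper. The paper's proof does not track explicit generators at all: it invokes the Heller--Ormsby functor $c^*_{\C/\R}$, which is a section of $Re^{C_2}_B$ satisfying $c^*_{\C/\R}(S^{0,0}_{C_2})=S^{0,0}_{\R}$, so the composite $Re^{C_2}_B\circ c^*_{\C/\R}$ induces the identity on $\pi_{0,0}(S^{0,0}_{C_2})$, which makes the map on $\pi_{0,0}$ a \emph{surjection}; combined with Morel's $\pi_{0,0}(S^{0,0}_{\R})\iso\Z\oplus\Z$ and Bredon's $\pi_{0,0}(S^{0,0}_{C_2})\iso\Z\oplus\Z$, a surjection between free abelian groups of the same finite rank is an isomorphism. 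Your argument instead pins down the map on generators: unitality forces $1\mapsto 1$, and strong symmetric monoidality (preservation of the symmetry isomorphism on $S^{1,1}\wedge S^{1,1}$, compatible with desuspension since the functor is monoidal and stable) forces $\epsilon\mapsto\varepsilon$, so the map carries a $\Z$-basis to a $\Z$-basis. What your route buys is that you only need strong symmetric monoidality of $Re^{C_2}_B$, not the existence of the section $c^*_{\C/\R}$ (which in \cite{HellerOrmsby} comes with completion caveats later removed in \cite{HellerOrmsbywithoutcompletion}); it also exhibits explicitly what the isomorphism does. What the paper's route buys is brevity and avoidance of the bookkeeping needed to justify that $\{1,\epsilon\}$ is a $\Z$-basis of $GW(\R)$ (via $\epsilon=-\langle-1\rangle$ and rank-plus-signature). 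Both arguments lean on Morel's computation $\pi_{0,0}(S^{0,0}_{\R})\iso GW(\R)$; the paper additionally uses Bredon's computation on the target, while you use the more detailed structure of $GW(\R)$ on the source.
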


This is an easy consequence of the section $c^*_{\C/\R}$ that Heller and Ormsby produce in \cite{HellerOrmsby}.
Note that $c^*_{\C/\R}(S^{0,0}_{C_2})=S^{0,0}_{\R}$.
As $c^*_{\C/\R}$ is a section of $Re^{C_2}_B$ the composite $Re^{C_2}_B\circ c^*_{\C/\R}$ induces the identity map on $\pi_{0,0}(S^{0,0}_{C_2})$ and so Betti realization must induce an isomorphism.
This follows from Morel's computation that $\pi_{0,0}(S^{0,0}_{\R})\iso \Z \dsum \Z$.
We learned this argument from Jeremiah Heller.
While the results of \cite{HellerOrmsby} only hold up to completion issues this restriction has been removed by their subsequent work in \cite{HellerOrmsbywithoutcompletion}.

\begin{cor}
\label{cor:motivic attaching maps}
As motivic cell complexes over $\R$
\[
D_2^{tr (k)}S^{n,m}\iso D_2^{tr (k-1)}S^{n,m} \cup_{f^1_{2n+k-1,2m}} CS^{2n+k-1,2m}
\]
where the attaching map of the $(i,j)$-cell, is given by
\[
\widetilde{f}_{i,j} = \left\{
\begin{array}{ll}
1-\varepsilon & i \equiv 1, j \equiv 1 \pmod{2} \\
2  & i \equiv 1, j \equiv 0 \pmod{2} \\
1+\varepsilon & i \equiv 0, j \equiv 1 \pmod{2} \\
0 & i \equiv 0, j \equiv 0 \pmod{2} \\
\end{array}
\right\}
\]
in $\pi_{0,0}(S^{0,0}_{\R})$.
\end{cor}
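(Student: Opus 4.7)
The plan is to transfer the equivariant computation of \Cref{prop:attaching maps} to the motivic setting using the Betti realization functor $Re^{C_2}_B$ of Heller--Ormsby. Since $Re^{C_2}_B$ is a strong symmetric monoidal left Quillen functor that preserves cofiber sequences, commutes with the naive extended power construction on cellular spectra (the Lemma preceding this corollary), and sends $S^{p,q}_{\R}$ to $S^{p,q}_{C_2}$, it realizes the motivic cofiber sequence
\[
D_2^{tr(k-1)}S^{n,m}_{\R} \longrightarrow D_2^{tr(k)}S^{n,m}_{\R} \longrightarrow S^{2n+k,2m}_{\R}
\]
to the analogous equivariant cofiber sequence, whose attaching map was computed in \Cref{sec:rppqs} via the stunted projective space identification of \Cref{prop:thom spaces}.

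First I would verify the cell structure of $D_2^{tr(k)}(S^{n,m}_{\R})$. The motivic model of $E\Sigma_2$ is a constant simplicial presheaf, so its skeletal filtration has successive quotients of the form $\Sigma_{2+}\wedge S^{k,0}_{\R}$. Smashing with $(S^{n,m}_{\R})^{\wedge 2}$ and passing to $\Sigma_2$-orbits yields the identification $D_2^{tr(k)}/D_2^{tr(k-1)}\simeq S^{k,0}_{\R}\wedge S^{2n,2m}_{\R}=S^{2n+k,2m}_{\R}$, so the attaching domain for passing from the $(k-1)$-skeleton to the $k$-skeleton is indeed $S^{2n+k-1,2m}_{\R}$, as stated. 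Because $Re^{C_2}_B$ is monoidal and the simplicial model for $E\Sigma_2$ agrees on the nose in the two settings, Betti realization transports this filtration termwise to the corresponding equivariant filtration on $D_2^{tr(k)}(S^{n,m}_{C_2})$.

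Next, the projection of the attaching map $f^1_{2n+k-1,2m}$ onto the top cell $D_2^{tr(k-1)}/D_2^{tr(k-2)}\simeq S^{2n+k-1,2m}_{\R}$ is classified by an element of $[S^{2n+k-1,2m}_{\R},S^{2n+k-1,2m}_{\R}]\cong \pi_{0,0}(S^{0,0}_{\R})$. Naturality of the cofiber construction under $Re^{C_2}_B$ sends this element to the analogous equivariant projection, which by \Cref{prop:attaching maps} equals $\widetilde{f}_{2n+k-1,2m}$, with value determined by the parities of $2n+k-1$ and $2m$ according to the table. By \Cref{lemma:betti on pi_0}, Betti realization induces an isomorphism $\pi_{0,0}(S^{0,0}_{\R})\cong\pi_{0,0}(S^{0,0}_{C_2})$, so the motivic projection is uniquely determined by its equivariant image, yielding the claimed formula.

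The main obstacle is the bookkeeping in the first step: justifying that the motivic simplicial model of $E\Sigma_2$ carries a skeletal filtration whose realization recovers the equivariant skeletal filtration $\widehat{E}\Sigma_2^{(k,0)}\simeq S^{k,0}_{C_2}$ with its trivial $C_2$-action, compatibly with the $\Sigma_2$-action used to form orbits. Once this compatibility is in place the remainder is formal, since the functoriality of $Re^{C_2}_B$ on cofiber sequences transports the motivic attaching map to the equivariant one already computed in \Cref{prop:attaching maps}, and the $\pi_{0,0}$-isomorphism transports the answer back to $\pi_{0,0}(S^{0,0}_{\R})$.
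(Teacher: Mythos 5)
Your proposal is correct and follows essentially the same route as the paper: identify the successive quotients of the skeletal filtration of $D_2^{tr}(S^{n,m}_{\R})$, use that Betti realization is monoidal, preserves cofiber sequences, and commutes with the naive extended power on cellular spectra to transport the attaching map to the equivariant side where \Cref{prop:attaching maps} applies, and then pull the answer back along the isomorphism $\pi_{0,0}(S^{0,0}_{\R})\iso\pi_{0,0}(S^{0,0}_{C_2})$ of \Cref{lemma:betti on pi_0}. The only cosmetic difference is that you justify the identification of the filtration quotients directly from the simplicial model of $E\Sigma_2$, whereas the paper cites part 2 of the Geometric Condition.
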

While in the equivariant case we have two different descriptions, we only actually require this one for the proof of our main theorem.
This is because we are using the naive extended power construction and so we never add twisted cells.
\begin{proof}
Here we compute the attaching maps of the motivic extended powers by examining the Betti realization of the cellular filtration:

\[
\xymatrix{
S^{2p,2q}_{C_2}\we Re^{C_2}_B(D_2^{tr (0)}(S^{p,q}_{\R})) \ar[d] \ar@{^{(}->}[r]&
Re^{C_2}_B(D_2^{tr (1)}(S^{p,q}_{\R})) \ar[d] \ar@{^{(}->}[r]&
Re^{C_2}_B(D_2^{tr (2)}(S^{p,q}_{\R})) \ar[d] \ar@{^{(}->}[r]&
\cdots\\
S^{2p,2q}_{C_2}\we D_2^{tr(0)}(S^{p,q}_{C_2}) \ar[d] \ar@{^{(}->}[r]&
D_2^{tr(1)}(S^{p,q}_{C_2}) \ar[d] \ar@{^{(}->}[r]&
D_2^{tr(2)}(S^{p,q}_{C_2}) \ar[d] \ar@{^{(}->}[r]&
\cdots \\
S^{2p,2q}_{C_2}\we \RP^{2p,2q}/\RP^{2p-1,2q} \ar@{^{(}->}[r]&
\RP^{2p+1,2q}/\RP^{2p-1,2q} \ar@{^{(}->}[r]&
\RP^{2p+2,2q}/\RP^{2p-1,2q} \ar@{^{(}->}[r]&
\cdots.
}
\]
The vertical arrows from the first to the second row are gotten by the above property of Betti realization.
The vertical maps from the second to third row are the homeomorphisms of \Cref{cor:equiv extended powers}.
The attaching maps we wish to compute are the various composites
\[
S^{2p+n,2q}_{\R}\lra D_2^{tr (n)}(S^{p,q}_{\R}) \lra S^{2p+n,2q}_{\R}
\]
as elements of $\pi_{0,0}(S^{0,0}_{\R})$.
We know that the cofiber of 
\[
D_2^{tr (n-1)}(S^{p,q}_{\R}) \lra D_2^{tr (n)}(S^{p,q}_{\R})
\]
is $S^{2p+n,2q}$ via part two of the Geometric condition, see \Cref{defn:geometric condition}.
However, Betti realization being full and faithful implies that
\[
Re_B^{C_2}:\pi_{0,0}(S^{0,0}_{\R}) \lra \pi_{0,0}(S^{0,0}_{C_2})
\]
induces an isomorphism.
Thus the attaching map follows from \Cref{prop:attaching maps}.
\end{proof}

\subsection{The Geometric and Homotopical conditions in the motivic setting}
\label{subsec:geometric and homotopical condition motivic}
While the Geometric condition is a consequence of model categorical facts, see \Cref{prop: geometric condition}, establishing that the Homotopical condition holds requires a little bit more care.
However, both can be established just as in the equivariant case, which is the goal of this short section.
The arguments are exactly those of \Cref{sec:Hinfty} and so we don't repeat them here but we will provide references for the necessary facts.

\begin{prop}
\label{prop:geometric and homotopical condition motivic}
\begin{itemize}
\item The naive extended power construction $\Ei\Sigma_{n+}\wedge_{\Sigma_n}(-)^{\wedge n}$ satisfies the geometric condition.
\item The motivic Adams spectral sequence over $Spec(\R)$ based on motivic cohomology, as developed by Dugger and Isaksen in \cite{DuggerIsaksenAdams}, satisfies the homotopy condition.
\end{itemize}
\end{prop}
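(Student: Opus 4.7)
The plan is to re-run the arguments of \Cref{sec:Hinfty} in the motivic setting, with each equivariant input replaced by its motivic counterpart. The author already tells us this is the strategy, so the job is to identify exactly which motivic facts slot into which step.

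For the geometric condition, I would revisit the proof of \Cref{prop: geometric condition} and observe that it used only two formal properties of the ambient category: the smash product of cofibrations is a cofibration in each variable, and taking orbits under a free action by a subgroup $\pi \subset \Sigma_r$ (with $E\pi$ built out of cells with trivial $\pi$-action) preserves cofibrations. Both hold in the closed flasque symmetric monoidal model structure on motivic symmetric $\mathbb{P}^1$-spectra of Panin-Pimenov-R\"ondigs, which is the model structure we have fixed for \Cref{sec:motivic}. Smashing the cofibration $E\pi^{(i-1)} \hookrightarrow E\pi^{(i)}$ against $\Gamma^r_{s-1} \hookrightarrow \Gamma^r_s$, taking diagonal $\pi$-orbits, and running the braid diagram computing the cofiber of a composition then reproduces items (1)--(4) of \Cref{defn:geometric condition} verbatim.

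For the homotopical condition, the strategy mirrors \Cref{prop: splitting}. I would invoke a motivic universal coefficient spectral sequence for $\HFR_2$-modules $A,B$ of the form
\[
\Ext_{\M_2^{\R}}^{*,*,*}(\pi_{*,*}A,\pi_{*,*}B) \ssra [A,B]_{\HFR_2},
\]
apply it to $A = \HFR_2 \wedge X$ and $B = \HFR_2 \wedge Y$, and use the freeness of the motivic dual Steenrod algebra $\cA_*^{\R}$ over $\M_2^{\R}$ from \Cref{thm: motivic steenrod}. Whenever $\HFR_{2*,*}(X)$ is projective over $\M_2^{\R}$, the $\Ext$ vanishes in positive cohomological degree and the spectral sequence collapses onto the zero line, producing the desired natural isomorphism
\[
[X, \HFR_2 \wedge Y] \iso \Hom_{\M_2^{\R}}(\HFR_{2*,*} X, \HFR_{2*,*} Y).
\]
From here the obstruction-theoretic inductive construction of \Cref{thm: hinfty} transports verbatim: the obstruction to extending $\xi_{k,s}$ lives in a $\Hom_{\M_2^{\R}}$-group and is hit by the map induced from the Adams filtration inclusion, which is null on $\HFR_2$-homology for the same pushout-of-null-maps reason as in the equivariant proof.

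The main obstacle I expect is the bookkeeping around the motivic universal coefficient and K\"unneth spectral sequences: one must verify that $\HFR_{2*,*}(\Gamma^r_s)$ remains projective over $\M_2^{\R}$ at every stage of the induction, which rests on the collapse of a motivic K\"unneth spectral sequence and ultimately on the flatness of $\cA_*^{\R}$ over $\M_2^{\R}$. Once that projectivity is in place, no new geometric input is needed beyond what \Cref{subsec: motivic extended powers of spheres} and the closed flasque model structure already supply, and both bullets of the proposition follow from the obstruction argument precisely as in the $C_2$-equivariant case.
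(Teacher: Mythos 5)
Your proposal is correct and follows essentially the same route as the paper: the geometric condition is obtained by rerunning the formal model-categorical argument of \Cref{prop: geometric condition} in the closed flasque model structure, and the homotopical condition by substituting the motivic UCSS/KSS of Dugger--Isaksen together with the freeness of $\cA_*^{\R}$ over $\M_2^{\R}$ into the collapse argument of \Cref{prop: splitting}, after which the obstruction-theoretic induction of \Cref{thm: hinfty} carries over verbatim. Your added remark about tracking the projectivity of $\HFR_{2*,*}(\Gamma^r_s)$ through the induction is exactly the point the paper flags as well.
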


The first part follows the exact same line of argument as in the equivariant setting.
It is not clear to us that the more nuanced extended power constructions, the more geometrically relevant ones, will satisfy this condition as we are unfamiliar with them.

The second part also follows from the exact same arguments as in \Cref{sec:Hinfty}.
There, we only needed to know the existence of certain spectral sequences and that the homology of various spectra was free over the coefficients.
The motivic UCSS and KSS are developed in \cite{DuggerIsaksenMotivicCell} as Proposition 7.7.
The freeness of the dual Steenrod algebra, see \Cref{subsec: motivic algebraic condition}, and the motivic homology of extended powers of spheres ensure that these spectral sequences collapse just as in the equivariant situation.
Also, the obstruction theoretic result of \cite{ChristensenDwyerIsaksenObstModelCats} also applies in this context.
It is worth recalling that all of the above takes place in the subcategory of cellular motivic spectra where weak equivalences are detected by isomorphisms on bigraded homotopy groups, just as was the case in our equivariant situation.

\section{$d_2$ in Adams spectral sequence}
\label{sec:equi diff}
Now we arrive at our main theorem.
Our argument will be given ambiguously, this means that all indications as to whether or not we are using equivariant or motivic objects will be suppressed.
We will now combine the results of Sections $5-8$, as well as those of \Cref{sec:motivic}, to prove our main theorem, following the strategy outlined in \Cref{sec:outline}. 
Frequently, we will not distinguish between the class in the spectral sequence, its representative in the $E_2$-page, or the map of filtered spectra into the Adams tower representing it.
Note that by Adams tower we mean what many refer to as Adams resolution, for example, as in chapter IV of \cite{HRS}. 
After our main proof we will give sample applications in both the equivariant and motivic cases.

\subsection{Proof of the main theorem}
\label{subsec:proof}
We wish to compute differentials on classes in a homotopy exact couple.
Recall from \Cref{defn:Hinfty filtration} that an $\Hi$-structure on $Y_{\bu}$ gives us a map of filtrations 
\[
D^{tr(n)}_2\Gamma_m(\Yb):=S(\R^{n,0})_+\wedge_{\Sigma_2} \Gamma_m \to Y_{m+n}.
\]
This restricts to a map of the form 
\[
D^{tr(n)}_2(Y_k)\hra D^{tr(n)}_2 \Gamma_{2k}(Y_{\bu}) \sr{\xi_2}\to Y_{2k+n}.
\]
Our grading convention differs from that of Bruner.
Our Adams tower occurs in negative degrees while our cellular filtration of $E\Sigma_2^{(\bu)}$ is in positive degrees.
Bruner's Adams tower is in positive degrees and $E\Sigma_2^{(\bu)}$ is in negative degrees in \cite{HRS}.

The classes in a spectral sequence are represented by maps into the Adams tower from filtered spectra of the form:
\[
U(r,s,(p,q)): \cdots \lra * \lra S^{p-1,q} \lra S^{p-1,q} \lra \cdots \lra S^{p-1,q} \lra CS^{p-1,q} \lra CS^{p-1,q} \lra \cdots.
\]
We briefly recall the theory of universal examples for exact couples as discussed in \Cref{sec:outline}.
The spectral sequence associated with this filtration has only two nonzero elements, one representing the class $x$ which is supported on the leftmost $CS^{p-1,q}$ and $d_r(x)$ which is supported on the leftmost $S^{p-1,q}$. 
Consider the filtered spectrum $U(r,s,(p,q))$
\[
\xymatrix{
\ast \ar[r]&
S^{p-1,q} \ar[r] \ar[dl]&
S^{p-1,q}\ar[rr] \ar[dl]&
&
S^{p-1,q}\ar[r] \ar[dl]&
CS^{p-1,q}\ar[r] \ar[dl]&
CS^{p-1,q} \ar[dl]\\
\ast&
S^{p-1,q}&
\cdots&
\ast&
S^{p,q} \ar@/^1pc/[lll]^{d_r'}&
\ast
}
\]
where we write $d_r'$ because there is a degree shift and the actual differential has codomain $S^{p,q}$.
The filtration $U(r,s,(p,q))$ maps into the Adams filtration
\[
\xymatrix{
U(r,s,(p,q)) \ar[d]&
&
\ast \ar[r] \ar[d]&
S^{p-1,q} \ar[r] \ar[d]&
S^{p-1,q}\ar[rr] \ar[d]&
&
S^{p-1,q}\ar[r] \ar[d]&
CS^{p-1,q}\ar[r] \ar[d]&
CS^{p-1,q} \ar[d]\\
\Yb&
&
Y_{s-r-1} \ar[r]&
Y_{s-r}\ar[r]&
\cdots\ar[rr]&
&
Y_{s-1}\ar[r]&
Y_s\ar[r]&
Y_{s+1}}
\]
where it detects $d_r$ on a class of Adams filtration $s$ and geometric dimension $(p,q)$.

\begin{thm}
\label{thm:diff}
In the $C_2$-equivariant or real motivic Adams spectral sequence converging to the bigraded stable homotopy groups of spheres, for a permanent cycle $x \in \Ext^{s,p,q}_{\cA_*}(\M_2,\M_2)$, we have 
\[
d_2 sq^{i-1} x =
\alpha_{i,q} sq^i x
\]
where 
\[
\alpha_{i, q} = \left\{
\begin{array}{ll}
h_0  & i \equiv 1, q \equiv 1 \pmod{2} \\
h_0 + \rho h_1 & i \equiv 1, q \equiv 0 \pmod{2} \\
\rho h_1 & i \equiv 0, q \equiv 1 \pmod{2} \\
0 & i \equiv 0, q \equiv 0 \pmod{2} \\
\end{array}
\right.
\]
in
$\Ext^{1,1,0}_{\mathcal{A}}(\M_2,\M_2)$. 
\end{thm}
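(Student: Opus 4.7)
The plan is to follow the strategy outlined in \Cref{sec:outline}. Let $x \in \Ext^{s,p,q}_{\cA_*}(\M_2, \M_2)$ be a permanent cycle, and represent it by a map of filtered spectra $\wt{x} : U(\infty, s, (p,q))_{\bu} \to \Yb$, where $\Yb$ is the $\eHF_2$-Adams tower for the sphere spectrum. Applying the naive filtered extended power construction yields $\tGb^2(\wt{x}) : \tGb^2(U(\infty, s, (p,q))_{\bu}) \to \tGb^2(\Yb)$, which I then post-compose with the $\Hi$-structure map $\xi_2 : \tGb^2(\Yb) \to \Yb$ produced by \Cref{thm: hinfty} (and its motivic counterpart \Cref{prop:geometric and homotopical condition motivic}). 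Any $d_2$-differential in the spectral sequence associated with $\tGb^2(U(\infty, s, (p,q))_{\bu})$ will push forward along this composite to a $d_2$-differential in the Adams spectral sequence supported on something detected by $sq^{i-1}x$.

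The next step is to identify the domain filtration geometrically. In the equivariant case, \Cref{prop:thom spaces} and \Cref{cor:equiv extended powers} give $\hocolim_\bu \tGb^2(U(\infty, s, (p,q))_{\bu}) \we \sus^{p,q}\RP^{\infty+p, q}/\RP^{p-1, q}$, with the filtration agreeing (via the Geometric Condition) with the skeletal filtration of the stunted projective space; in the motivic setting, \Cref{cor:motivic attaching maps} supplies the analogous filtered identification via Betti realization. Under this identification the $(2p+i)$-cell represents the power operation $sq^i x$ on the $E_2$-page, because the algebraic operations from \Cref{sec:power ops} are exactly those induced on the associated graded by the $\xi_r^{n,m}$ maps of the $\Hi$-structure.

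Combining these ingredients, the only potentially nontrivial $d_2$-differentials in the spectral sequence for $\tGb^2(U(\infty, s, (p,q))_{\bu})$ are those induced by the cellular attaching maps in \Cref{prop:attaching maps}: the coefficient of the differential from the $(2p+i)$-cell to the $(2p+i-1)$-cell depends only on the parities of $i$ and $q$. Pushing forward along $\xi_2 \circ \tGb^2(\wt{x})$, this becomes the relation $d_2(sq^{i-1} x) = \alpha_{i,q}\, sq^i x$ in the Adams spectral sequence. The final bookkeeping step is to identify the stable homotopy classes $1-\varepsilon$, $2$, $1+\varepsilon$, $0 \in \pi_{0,0}(S^{0,0})$ from \Cref{prop:attaching maps} with $h_0$, $h_0 + \rho h_1$, $\rho h_1$, $0$ in $\Ext^{1,1,0}_{\cA_*}(\M_2, \M_2)$, using that $2$ is detected by $h_0$ and $1+\varepsilon$ is detected by $\rho h_1$ (the consistency $2 = (1-\varepsilon) + (1+\varepsilon)$ matches $h_0 + \rho h_1 = h_0 + \rho h_1$).

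The hard part will be step three: establishing that the algebraic power operations on $E_2$ defined via \Cref{lemma: Bobs lemma} genuinely agree with the geometric operations induced by the $\Hi$-structure on $\Yb$. The uniqueness (up to equivariant chain homotopy) in \Cref{lemma: Bobs lemma}, together with the Geometric Condition and the collapse of the relevant K\"unneth spectral sequences, forces this compatibility, but carefully tracking the bigradings through the identification $\tGb^2(U(\infty, s, (p,q))_\bu) \we \sus^{p,q}\RP^{\infty+p,q}/\RP^{p-1,q}$ — in particular, verifying that the attaching map indexed by parity $(i \bmod 2, q \bmod 2)$ produces precisely the shift from $sq^{i-1}$ to $sq^i$ — is where most of the real content lies. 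Once this compatibility is in hand, the four-case formula for $\alpha_{i,q}$ reads off directly from \Cref{prop:attaching maps}.
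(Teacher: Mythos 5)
Your architecture is exactly the paper's: represent the permanent cycle by a map out of $U(\infty,s,(p,q))_\bu$, apply the filtered naive quadratic construction, compose with the $\Hi$-structure map from \Cref{thm: hinfty}, identify the source with the skeletally filtered stunted projective space $\sus^{p,q}\RP^{\bu+p,q}/\RP^{p-1,q}$ (via \Cref{prop:thom spaces} equivariantly, via Betti realization and \Cref{cor:motivic attaching maps} motivically), and read the coefficients off \Cref{prop:attaching maps}. So the route is the right one. However, two pieces of the bookkeeping are off in ways that would break the derivation as written. First, your cell indexing is reversed: you place $sq^i x$ on the $(2p+i)$-dimensional cell, i.e.\ on the $i$-th skeletal stage of $E\Sigma_2$, which would put $x^2$ at the top and make $sq^{i-1}x$ sit \emph{below} $sq^i x$; the attaching map of the $(2p+i)$-cell onto the $(2p+i-1)$-cell would then yield $d_2(sq^i x)=\alpha\, sq^{i-1}x$, the transpose of the claimed formula. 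In the paper's conventions $sq^i x$ is carried by the $(p-i)$-th skeletal stage (the cell of dimension $(3p-i,2q)$, with $x^2$ on the bottom cell $S^{2p,2q}=D_2^{tr(0)}S^{p,q}$), so that $sq^{i-1}x$ lies one stage \emph{above} $sq^i x$ and its boundary is $\widetilde f^1_{2p-i,q}$, whose class depends on the parities of $2p-i\equiv i$ and $q$ exactly as in the statement. You need to fix this orientation before the four-case table can be matched up.

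Second, you never explain why the relevant differential is a $d_2$ rather than a $d_1$: the attaching map between adjacent cells is a priori a $d_1$ in the filtration $\tGb^2(U(\infty,s,(p,q))_\bu)$, and it only becomes a $d_2$ in the Adams spectral sequence because $2$, $1+\varepsilon$, and $1-\varepsilon$ all have Adams filtration exactly one, so the composite $S^{3p-i+1,2q}\to Y_{2s+p-i}$ lifts one further stage down the Adams tower; the lift is what is detected by the filtration-one class $\alpha_{i,q}\in\Ext^{1,1,0}$. This is also where your parenthetical "$2$ is detected by $h_0$" contradicts your own (correct) dictionary $1-\varepsilon\mapsto h_0$, $2\mapsto h_0+\rho h_1$, $1+\varepsilon\mapsto\rho h_1$: in the equivariant and real motivic settings it is $1-\varepsilon$, not $2$, that is detected by $h_0$. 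Finally, the compatibility you flag as the "hard part" --- that the $\Hi$-structure induces May's algebraic operations on $E_2$ --- is indeed needed, and it is supplied by the Geometric Condition together with the uniqueness clause of \Cref{lemma: Bobs lemma}, so your instinct there is correct; it is the two indexing/filtration points above that actually need repair.
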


Recall that $\M_2$ are the coefficients of the homology theory, see \Cref{subsec:recollections equivariant coh} or \Cref{subsec: motivic algebraic condition}
The trigrading on this $\Ext$-group is the expected one, $s$ is the homological degree which corresponds to the Adams filtration while $p$ and $q$ come from the bigrading in the underlying category of $\cA_*$-comodules.
Recall that by definition the classes $h_0\in \Ext^{1,1,0}_{\cA_*}(\M_2,\M_2)$ and $h_1\in \Ext^{1,2,1}_{\cA_*}(\M_2,\M_2)$ are induced by $sq^1\in\mc{A}$ and $sq^2\in\mc{A}$ respectively.

\begin{proof}
We wish to look at what happens when we take the extended power construction and apply it to the map representing a permanent cycle.
A map of filtered spectra of the form
\[
\xymatrix{
U(\infty,s,(p,q)) \ar[d]&
&
\cdots\ar[r] \ar[d]&
\ast \ar[r] \ar[d]&
S^{p,q} \ar[r] \ar[d]&
S^{p,q} \ar[r] \ar[d]&
\cdots \ar[d]
\\
\Yb&
&
\cdots\ar[r]&
Y_{s-1} \ar[r]&
Y_s \ar[r]&
Y_{s+1} \ar[r]&
\cdots  
}
\]
detects a permanent cycle of Adams filtration $s$ and geometric dimension $(p,q)$.

It is to these universal examples that we will apply the extended power construction.
We will then examine their associated spectral sequence coming from the cellular filtration in order to obtain formulas for $d_2$ of power operations applied to permanent cycles.
Applying the extended power construction of a filtration given as \Cref{defn:Hinfty filtration} to a permanent cycle gives the filtration
\[
\cdots \lra \ast \lra D^{tr(0)}_2 S^{p,q} \lra D^{tr(1)}_2 S^{p,q} \lra D^{tr(2)}_2 S^{p,q} \lra \cdots
\]
which maps into the extended power construction applied to the Adams filtration
\[
\cdots \lra \tG^2_{2s-1}(\Yb) \lra \tG^2_{2s}(\Yb) \lra \tG^2_{2s+1}(\Yb) \lra \cdots
\]
by functoriality, where 
\[
\tG^2_s(\Yb):=\bigcup_{n+k=s}D^{tr(n)}_2 \Gamma_k(Y_{\bu})
\]
as in \Cref{sec:outline}.
Composing the induced map between the quadratic constructions and the $\Hi$-structure of the the Adams tower $Y_{\bu}$ provided by \Cref{thm: hinfty}, we obtain the map of filtrations
\[
\xymatrix{
\ast \ar[r] \ar[d]&
D^{tr(0)}_2 S^{p,q} \ar[r] \ar[d]&
D^{tr(1)}_2 S^{p,q} \ar[r]\ar[d]&
D^{tr(2)}_2 S^{p,q} \ar[d]
\\
\tG^2_{2s-1}(\Yb) \ar[r] \ar[d]&
\tG^2_{2s}(\Yb) \ar[r] \ar[d]&
\tG^2_{2s+1}(\Yb) \ar[r] \ar[d]&
\tG^2_{2s+2}(\Yb)\ar[d]
\\
Y_{2s-1} \ar[r]&
Y_{2s}\ar[r]&
Y_{2s+1}\ar[r]&
Y_{2s+2}.
}
\]

When the above construction is applied to a permanent cycle $x$, we get that the classes 
\[
\left \{ sq^{\vert x\vert}(x), sq^{\vert x\vert-1}(x), \ldots, sq^2(x),sq^1(x),sq^0(x)\right \}
\]
are supported on the image of the above diagram.
Specifically, $sq^i(x)$ is supported on the $(p-i)$-cell of $D^{tr}_2S^{p,q}$.
This follows from a combination of the Geometric Condition and the definition of the operations constructed in \Cref{sec:power ops}.
Using the Geometric Condition we know that the homotopy of the associated graded complex of the filtration
\[
\tG^2_{\bu}(\Yb):=\bigcup_{n+k=\bu} D^{tr(n)}_2 \Gamma_k(Y_{\bu})
\]
is the complex with $s$th term 
\[
\bigoplus_{n+k=s} C_n(E\Sigma_2) \tensor \pi_*(E^0(\Yb))^{\tensor 2}_k.
\]
This complex maps to the associated graded complex of the Adams tower, and this map is precisely the structure map constructed in \Cref{sec:power ops} which induces power operations in $\Ext$.

For example, since $x$ is supported on $Y_s/Y_{s-1}$ and of geometric dimension $(p,q)$, $sq^s(x)=x^2$ is supported on $Y_{2s}/Y_{2s-1}$ and of geometric dimension $(2p,2q)$.
This can also be represented by the composite map 
\[
S^{2p,2q} \sr{1}\lra S^{p,q} \wedge S^{p,q} \sr{x \wedge x} \lra Y_s \wedge Y_s \sr{\mu} \lra Y_{2s}
\]
This composite is the restriction of the structure map $\xi$ to the $0$-cell of the quadratic construction by assumption.
We can perform similar lifts of the classes $sq^i(x)$.

For the moment we will focus on the $C_2$-equivariant setting.
To determine differentials, we consider the spectral sequence associated with the cellular filtration of $D^{tr}_2 S^{p,q}_{C_2}$.
In the $C_2$-equivariant setting, we computed the cellular filtration of these constructions in \Cref{sec:Thom}.
We found that 
\[
D^{tr(p-i)}_2(S^{p,q}_{C_2})\iso \sus^{p,q} \RP^{2p-i, q}/\RP^{p-1,q}.
\]
The attaching maps of the cells are the differentials in the spectral sequence associated with the skeletal filtration of 
\[
\sus^{p,q} \RP^{2p-i, q}/\RP^{p-1,q}.
\]
We computed those attaching maps in \Cref{sec:rppqs}.
This is where we use fourth ingredient that is discussed in \Cref{hypotheses}.

Our filtration 
\[
\cdots \lra \ast \lra D^{tr(0)}_2 S^{p,q}_{C_2} \lra D^{tr(1)}_2 S^{p,q}_{C_2} \lra D^{tr(2)}_2 S^{p,q}_{C_2} \lra \cdots
\]
becomes
\[
\cdots \lra \ast \lra \sus^{p,q} \RP^{p, q}/\RP^{p-1,q} \lra \sus^{p,q} \RP^{p+1, q}/\RP^{p-1,q} \lra \sus^{p,q} \RP^{p+2, q}/\RP^{p-1,q} \lra \cdots.
\]
We will abbreviate this as
\[
\cdots \lra \ast \lra S^{2p,2q}_{C_2} \lra \sus^{p,q} X_1 \lra \sus^{p,q} X_2 \lra \cdots
\]
where each $X_i:=\RP^{p+i, q}/\RP^{p-1,q}$.
To obtain information about $d_2(sq^{i-1}(x))$ we focus on $X_{p-i} \subset X_{p-i+1}$.
The quotient $X_{p-i+1}/X_{p-i}$ is a two cell complex.
As 
\[
X_{p-i+1}/X_{p-i} \we \RP^{2p-i+1,q}/\RP^{2p-i,q},
\]
we can use \Cref{prop:attaching maps} to determine the projection of the attaching map of the cell supporting $sq^{i-1}(x)$.
The projection of the attaching map of the cell in dimension $(2p-i+1,q)$ to the cell in $(2p-i,q)$ is the same as that of same map that attaches the $(2p-i+1,q)$ cell to $\RP^{2p-i,q}$, namely $\widetilde{f}^1_{2p-i,q}$.

In the real motivic setting, we obtain the same result for the projection of the attaching maps.
This is a direct consequence of the results of \Cref{subsec: motivic extended powers of spheres}.
Essentially, we don't have the identification 
\[
D^{tr(p-i)}_2(S^{p,q}_{C_2})\iso \sus^{p,q} \RP^{2p-i, q}/\RP^{p-1,q}.
\]
as we did in the equivariant situation.
However, we compute the projections of the attaching maps by way of the good formal properties of Betti realization.
So we can also identify the differentials in the cellular spectral sequence associated to the filtration
\[
\cdots \lra \ast \lra D^{tr(0)}_2 S^{p,q}_{\R} \lra D^{tr(1)}_2 S^{p,q}_{\R} \lra D^{tr(2)}_2 S^{p,q}_{\R} \lra \cdots
\]
using \Cref{cor:motivic attaching maps}.

The projection of these attaching maps determine the differentials in the associated spectral sequences in both the equivariant and motivic cases
\[
\xymatrix{
\sus^{p,q}X_{p-i-1}\ar[rr]&
&
\sus^{p,q}X_{p-i}\ar[rr] \ar[dl]&
&
\sus^{p,q} X_{p-i+1} \ar[r] \ar[dl]&
\cdots\\
&
S^{3p-i,2q}&
&
S^{3p-i+1,2q} \ar@/^1pc/[ll]^{d_1'}.&
&
}
\]
We look at the universal examples mapping into this spectral sequence and compose it with the $\Hi$-structure maps of the Adams filtration to obtain
\[
\xymatrix{
\ast \ar[r] \ar[d]&
S^{3p-i,2q} \ar[r] \ar[d] \ar@/_2pc/@{-->}[ddd]_{\alpha}&
CS^{3p-i+1,2q} \ar[r]\ar[d]&
\cdots
\\
\sus^{p,q}X_{p-i-1}\ar[r] \ar[d]&
\sus^{p,q}X_{p-i}\ar[r] \ar[d]&
\sus^{p,q} X_{p-i+1} \ar[r] \ar[d]&
\cdots\\
\tG^2_{2s-1}(\Yb) \ar[r] \ar[d]&
\tG^2_{2s}(\Yb) \ar[r] \ar[d]&
\tG^2_{2s+1}(\Yb) \ar[r] \ar[d]&
\cdots
\\
Y_{2s-1} \ar[r]&
Y_{2s}\ar[r]&
Y_{2s+1}\ar[r]&
\cdots.
}
\]
However, the projection of the attaching map may have a higher Adams filtration.
In fact, $2,1+\varepsilon$ and $1-\varepsilon$ all have Adams filtration $1$.
Thus we have the following lift of the boundary map
\[
\xymatrix{
&
S^{3p-i,2q}\ar[d]&
\\
&
S^{0,0}\wedge Y_{2s}\ar[d]\ar@{-->}[dl]_{\alpha}&
\\
Y_{-1}\wedge Y_{2s}\ar[r] \ar[d]^{\mu_{\bu}}&
Y_0\wedge Y_{2s}&
\\
Y_{2s-1}&
}
\]
which gives us our desired differential.
Here, $\alpha$ is the element in Adams filtration $1$ detecting the relevant attaching map.
The map $2$ is detected in the Adams spectral sequence by $h_0 + \rho h_1$, $1+\varepsilon$ by $\rho h_1$, and $1-\varepsilon$ by $h_0$ in $\Ext^{1,1,0}$.
\end{proof}

\subsection{Preliminary Applications}
\label{subsec: apps}
The above formula can be used to compute differentials in the $RO(C_2)$-graded and real motivic Adams spectral sequences.
The classical version of this theorem has the Hopf invariant one theorem as an application.
The theorem predates the work of Bruner, however his results in \cite{HRS} give a nice proof of the result.
For $n\geq 1$ the formula says
\[
d_2(h_{n+1})=h_0h_n^2.
\]
For $n=1,2,3$ the product on the right is $0$ in $\Ext$.
It is nonzero for higher $n$ and thus we obtain the above theorem since the classes $h_n$ do not survive the Adams spectral sequence.
Note that this requires understand what happens to non-permanent cycles, a case which Bruner handles.

We can use what is known, via the isomorphism of Dugger and Isaksen from \cite{DuggerIsaksenEquivandReal} and their computations in \cite{DuggerIsaksenLowMWstems}, to do similar computations in the equivariant and motivic setting.
The Dugger-Isaksen isomorphism is only in the range $p-s\geq 3q-5$.
Thus we will not be able to make complete determinations, but we will be able to do a few computations.

Classically, $sq^0(h_n)=h_{n+1}$ for every $n$, but this only holds for $n\geq 1$ in the equivariant and motivic settings.
Over $Spec(\mathbb{C})$ we have that where $sq^0(h_0)=\tau h_1$.
To apply our formula we require a permanent cycle.
Motivically, $h_1$ is in the $0th$ Milnor-Witt stem and the Adams differentials decrease this degree.
Therefore $h_1$ must be a permanent cycle in the motivic Adams spectral sequence as Morel has shown that the negative Milnor-Witt stems are trivial in \cite{MorelConnectivity}.
In this range we can use the Dugger-Isaksen isomorphism of stable homotopy groups to see that $h_1$ must be a permanent cycle equivariantly as well.

So we begin by considering the equivariant setting.

\begin{cor}
In the $RO(C_2)$-graded and real motivic Adams spectral sequences we have that
\begin{center}
$d_2(h_2)=h_0h_1^2=0$, $d_2(h_3)=(h_0+\rho h_1)h_2^2=0$, and $d_2(h_4)=(h_0+\rho h_1)h_3^2=h_0 h_3^2\neq=0$.
\end{center}
\end{cor}
That the products on the right are $0$ are computations due to Dugger and Isaksen, see the charts in \cite{DuggerIsaksenLowMWstems}.

\begin{proof}
We will be using the isomorphism of Dugger and Isaksen in order to get a hold of the relevant equivariant $\Ext$ groups.
First, we have $d_2(h_2)=h_0 h_1^2$ by the above formula since $h_1\in\Ext^{1,2,1}_{\cA_*}(\M_2,\M_2)$.
However, this product is $0$ in $\Ext$ just as in the classical situation.
Possible higher differentials on $h_2$ are $d_r(h_2)=\rho^{r-1}h_1^{r+1}$, however $h_1h_2=0$ and as $h_1$ is a permanent cycles if $d_r(h_2)=\rho^{r-1}h_1^{r+1}$ then we would have a contradiction.
The triviality of the product and the nonexistence of other possible targets for differentials both use the Dugger-Isaksen isomorphism of $\Ext$'s.
As the isomorphism of $\Ext$'s extends to one of Adams spectral sequences in this range we can deduce that $h_2$ is a permanent cycle motivically as well.

Similarly, $d_2(h_3)=(h_0+\rho h_1)h_2^2=0$ as $h_2\in\Ext^{1,4,2}_{\cA_*}(\M_2,\M_2)$, and the class $h_2^2$ is annihilated by both $h_0$ and $\rho h_1$.
To compute $d_2(h_4)$ we consider the motivic case and use that Betti realization induces a map of spectral sequences.
This is the most we can accomplish without greater knowledge of the relevant $\Ext$ groups.
Knowing what possible targets for differentials are would be very helpful as we can use the isomorphism of Dugger and Isaksen in conjunction with the product structure to rule out possible differentials.

Now we turn to the motivic setting over $Spec(\R)$.
We also have that motivically $d_2(h_3)=(h_0+\rho h_1)h_2^2=0$.
By examination of the charts in \cite{DuggerIsaksenLowMWstems} there are no targets for higher differentials supported on $h_3$, and so it is a permanent cycle.
Thus we have $d_2(h_4)=(h_0+\rho h_1)h_3^2=h_0 h_3^2$ since $h_3\in\Ext^{1,8,4}_{\cA_*}(\M_2,\M_2)$, and $h_1 h_3^2=0$.
To compute these products on the right see the charts at the end of \cite{DuggerIsaksenLowMWstems}.
This product $h_0h_3^2$ is nonzero equivariantly as well as motivically and so $h_4$ is not a permanent cycle.
\end{proof}

In order to compute $d_2(h_5)$, further analysis is needed since $h_5=sq^0(h_4)$ and $h_4$ is not a permanent cycle, our results do not apply.
However, we expect that further work in this direction, following Bruner's work \cite{HRS}, will show that $d_2(h_5)=(h_0 +\rho h_1)h_4^2$.

\addcontentsline{toc}{section}{References}
\bibliography{Bibliography}{}

\begin{thebibliography}{10}

\bibitem{BachmannHoyoisNorms}
Tom Bachmann and Marc Hoyois.
\newblock Norms in motivic homotopy theory, 2017.

\bibitem{BousfieldKanUniversal}
A.~K. Bousfield and D.~M. Kan.
\newblock A second quadrant homotopy spectral sequence.
\newblock {\em Trans. Amer. Math. Soc.}, 177:305--318, 1973.

\bibitem{Bredon}
G.~E. Bredon.
\newblock Equivariant stable stems.
\newblock {\em Bull. Amer. Math. Soc.}, 73:269--273, 1967.

\bibitem{BrunerAdamsPrimer}
R.~R. Bruner.
\newblock An {A}dams {S}pectral {S}equence {P}rimer.
\newblock \url{http://www.math.wayne.edu/~rrb/papers/adams.pdf}.

\bibitem{BrunerInfiniteFamily}
R.~R. Bruner.
\newblock An infinite family in {$\pi \sb\ast S^{0}$} derived from {M}ahowald's
  {$\eta _{j}$} family.
\newblock {\em Proc. Amer. Math. Soc.}, 82(4):637--639, 1981.

\bibitem{BrunerNewDifferential}
R.~R. Bruner.
\newblock A new differential in the {A}dams spectral sequence.
\newblock {\em Topology}, 23(3):271--276, 1984.

\bibitem{HRS}
R.~R. Bruner, J.~P. May, J.~E. McClure, and M.~Steinberger.
\newblock {\em {$H\sb \infty $} ring spectra and their applications}, volume
  1176 of {\em Lecture Notes in Mathematics}.
\newblock Springer-Verlag, Berlin, 1986.

\bibitem{BrunerRognes}
R.~R. Bruner and J.~Rognes.
\newblock Differentials in the homological homotopy fixed point spectral
  sequence.
\newblock {\em Algebr. Geom. Topol.}, 5:653--690 (electronic), 2005.

\bibitem{ChristensenDwyerIsaksenObstModelCats}
J.~Daniel Christensen, William~G. Dwyer, and Daniel~C. Isaksen.
\newblock Obstruction theory in model categories.
\newblock {\em Adv. Math.}, 181(2):396--416, 2004.

\bibitem{DuggerIsaksenAdams}
D.~Dugger and D.~Isaksen.
\newblock The motivic {A}dams spectral sequence.
\newblock {\em Geom. Topol.}, 14(2):967--1014, 2010.

\bibitem{DuggerIsaksenEquivandReal}
Daniel Dugger and Daniel Isaksen.
\newblock $\mathbb{{Z}}/2$-equivariant and $\mathbb{{R}}$-motivic stable stems.
\newblock \url{http://arxiv.org/abs/1603.09305}.

\bibitem{DuggerIsaksenLowMWstems}
Daniel Dugger and Daniel Isaksen.
\newblock Low-dimensional {M}ilnor--{W}itt stems over {$\Bbb{R}$}.
\newblock {\em Ann. K-Theory}, 2(2):175--210, 2017.

\bibitem{DuggerIsaksenMotivicCell}
Daniel Dugger and Daniel~C. Isaksen.
\newblock Motivic cell structures.
\newblock {\em Algebr. Geom. Topol.}, 5:615--652, 2005.

\bibitem{MotivicIntroBook}
B.~I. Dundas, M.~Levine, P.~A. \O~stv\ae r, O.~R\"ondigs, and V.~Voevodsky.
\newblock {\em Motivic homotopy theory}.
\newblock Universitext. Springer-Verlag, Berlin, 2007.
\newblock Lectures from the Summer School held in Nordfjordeid, August 2002.

\bibitem{HellerOrmsby}
J.~Heller and K.~Ormsby.
\newblock Galois equivariance and stable motivic homotopy theory.
\newblock {\em Trans. Amer. Math. Soc.}, 368(11):8047--8077, 2016.

\bibitem{HellerOrmsbywithoutcompletion}
J.~Heller and K.~Ormsby.
\newblock The stable galois correspondence for real closed fields, 2017.

\bibitem{HHRKervaire}
M.~A. Hill, M.~J. Hopkins, and D.~C. Ravenel.
\newblock On the nonexistence of elements of {K}ervaire invariant one.
\newblock {\em Ann. of Math. (2)}, 184(1):1--262, 2016.

\bibitem{HoyoisHopkinsMorel}
Marc Hoyois.
\newblock From algebraic cobordism to motivic cohomology.
\newblock {\em J. Reine Angew. Math.}, 702:173--226, 2015.

\bibitem{NoelMOanswer}
Justin~Noel (https://mathoverflow.net/users/8818/justin noel).
\newblock $ro(g)$-graded homotopy groups vs. mackey functors.
\newblock MathOverflow.
\newblock URL:https://mathoverflow.net/q/235251 (version: 2016-04-06).

\bibitem{HuKriz}
Po~Hu and Igor Kriz.
\newblock Real-oriented homotopy theory and an analogue of the
  {A}dams-{N}ovikov spectral sequence.
\newblock {\em Topology}, 40(2):317--399, 2001.

\bibitem{Kahn}
Daniel~S. Kahn.
\newblock Squaring operations in the {A}dams spectral sequence.
\newblock {\em Bull. Amer. Math. Soc.}, 75:136--138, 1969.

\bibitem{KronholmFree}
William~C. Kronholm.
\newblock A freeness theorem for {$RO(\Bbb Z/2)$}-graded cohomology.
\newblock {\em Topology Appl.}, 157(5):902--915, 2010.

\bibitem{LewisMandellUCandKSS}
L.~G. Lewis, Jr. and M.~A. Mandell.
\newblock Equivariant universal coefficient and {K}\"unneth spectral sequences.
\newblock {\em Proc. London Math. Soc. (3)}, 92(2):505--544, 2006.

\bibitem{LMS}
L.~G. Lewis, Jr., J.~P. May, M.~Steinberger, and J.~E. McClure.
\newblock {\em Equivariant stable homotopy theory}, volume 1213 of {\em Lecture
  Notes in Mathematics}.
\newblock Springer-Verlag, Berlin, 1986.
\newblock With contributions by J. E. McClure.

\bibitem{LigaardMadsen}
Hans Ligaard and Ib~Madsen.
\newblock Homology operations in the {E}ilenberg-{M}oore spectral sequence.
\newblock {\em Math. Z.}, 143:45--54, 1975.

\bibitem{MaySteenrod}
J.~P. May.
\newblock A general algebraic approach to {S}teenrod operations.
\newblock In {\em The {S}teenrod {A}lgebra and its {A}pplications ({P}roc.
  {C}onf. to {C}elebrate {N}. {E}. {S}teenrod's {S}ixtieth {B}irthday,
  {B}attelle {M}emorial {I}nst., {C}olumbus, {O}hio, 1970)}, Lecture Notes in
  Mathematics, Vol. 168, pages 153--231. Springer, Berlin, 1970.

\bibitem{Alaska}
J.~P. May.
\newblock {\em Equivariant homotopy and cohomology theory}, volume~91 of {\em
  CBMS Regional Conference Series in Mathematics}.
\newblock Published for the Conference Board of the Mathematical Sciences,
  Washington, DC, 1996.
\newblock With contributions by M. Cole, G. Comeza{\~n}a, S. Costenoble, A. D.
  Elmendorf, J. P. C. Greenlees, L. G. Lewis, Jr., R. J. Piacenza, G.
  Triantafillou, and S. Waner.

\bibitem{MorelConnectivity}
Fabien Morel.
\newblock The stable {${\Bbb A}^1$}-connectivity theorems.
\newblock {\em $K$-Theory}, 35(1-2):1--68, 2005.

\bibitem{PaninPimenovRoendigsmodel}
Ivan Panin, Konstantin Pimenov, and Oliver R\"ondigs.
\newblock On {V}oevodsky's algebraic {$K$}-theory spectrum.
\newblock In {\em Algebraic topology}, volume~4 of {\em Abel Symp.}, pages
  279--330. Springer, Berlin, 2009.

\bibitem{SchwedeEquivariant}
Stefan Schwede.
\newblock Lectures on {E}quivariant {S}table {H}omtopy {T}heory.
\newblock \url{http://www.math.uni-bonn.de/~schwede/equivariant.pdf}.

\bibitem{TilsonThesis}
S.~Tilson.
\newblock {\em P{OWER} {OPERATIONS} {IN} {THE} {K\"UNNETH} {AND}
  {$C_2$-EQUIVARIANT} {ADAMS} {SPECTRAL} {SEQUENCES}}.
\newblock ProQuest LLC, Ann Arbor, MI, 2013.
\newblock Thesis (Ph.D.)--Wayne State University.

\bibitem{VoevodskyMotivic11}
Vladimir Voevodsky.
\newblock Motivic cohomology with {${\bf Z}/2$}-coefficients.
\newblock {\em Publ. Math. Inst. Hautes \'Etudes Sci.}, (98):59--104, 2003.

\bibitem{VoevodskyEM12}
Vladimir Voevodsky.
\newblock Motivic {E}ilenberg-{M}aclane spaces.
\newblock {\em Publ. Math. Inst. Hautes \'Etudes Sci.}, (112):1--99, 2010.

\bibitem{VoevodskyMotivicEM}
Vladimir Voevodsky.
\newblock Motivic {E}ilenberg-{M}aclane spaces.
\newblock {\em Publ. Math. Inst. Hautes \'Etudes Sci.}, (112):1--99, 2010.

\bibitem{DwilsonEquivDL}
Dylan Wilson.
\newblock Power operations for $\mathrm{{H}}\mathbb{{F}}_2$ and a cellular
  construction of $\mathrm{{B}}\mathrm{{P}}\mathbb{R}$.
\newblock \url{http://arxiv.org/abs/1611.06958}.

\end{thebibliography}
\bibliographystyle{plain}

\end{document}